\theoremstyle{definition}
\newtheorem{Def}{Definition}[section]
\theoremstyle{plain}
\newtheorem{prop}[Def]{Proposition}
\theoremstyle{plain}
\newtheorem{cor}[Def]{Corollary}
\theoremstyle{plain}
\newtheorem{thm}[Def]{Theorem}
\theoremstyle{plain}
\newtheorem{lemma}[Def]{Lemma}
\theoremstyle{remark}
\newtheorem{oss}[Def]{Remark}
\theoremstyle{remark}
\newtheorem{esempio}[Def]{Example}
\theoremstyle{remark}
\theoremstyle{definition}
\newtheorem{Defi}{Definition}
\theoremstyle{plain}
\newtheorem{propi}[Defi]{Proposition}
\theoremstyle{plain}
\theoremstyle{plain}
\newtheorem{thmi}[Defi]{Theorem}
\theoremstyle{plain}
\theoremstyle{remark}
\theoremstyle{remark}
\theoremstyle{plain}
\theoremstyle{plain}
\DeclareMathOperator{\Hh}{H}
\newcommand{\id}{\text{id}}
\newcommand{\pic}[0]{\text{Pic}}
\newcommand{\pico}[1]{\text{Pic}(\overline{#1})}
\newcommand{\U}[0]{\text{U}}
\newcommand{\Ue}[0]{\emph{U}}
\newcommand{\eve}[0]{\emph{ev}}
\newcommand{\ev}[0]{\text{ev}}
\newcommand{\Addresses}{{
  \bigskip
  \footnotesize

  \textsc{Dipartimento di Matematica, Universit\`a di Pisa, Italy}\par\nopagebreak
  \textit{E-mail address}: \texttt{mattia.pirani@phd.unipi.it}
}}
\title{Flasque quasi-resolutions of algebraic varieties}
\date{}
\author{Mattia Pirani}
\begin{document}

\maketitle

\vspace{-0.7cm}

\begin{abstract}
Flasque resolutions play an important role in understanding birational properties of algebraic tori. For instance, Colliot-Th\'{e}l\`{e}ne and Sansuc have used them to compute $R$-equivalence classes of algebraic tori. We extend this notion to a larger class of algebraic varieties, including homogeneous spaces. This leads to a lower bound on the number of $R$-equivalence classes of homogeneous spaces, which is a slightly stronger version of a theorem of Colliot-Thélène and Kunyavski\u{\i}.  
\end{abstract}

\vspace{0.5cm}

\section*{Introduction}

The notions of flasque tori and flasque resolutions have been widely used for the investigation of birational questions about algebraic tori (see \cite{MR1634406}). One of the most relevant use of these techniques is due to Colliot-Th\'{e}l\`{e}ne and Sansuc. In \cite{requiv}, they made an intensive study of $R$-equivalence on algebraic tori. In particular, they found an explicit way to calculate $R$-equivalence of an algebraic torus $T$. Let 
\begin{equation*}
        \begin{tikzpicture}[baseline= (a).base]
        \node[scale=1] (a) at (0,0){
        \begin{tikzcd}
       1\arrow[r] & S\arrow[r] & E\arrow[r] & T\arrow[r] & 1
       \end{tikzcd}};
        \end{tikzpicture}
\end{equation*}
be a flasque resolution of $T$, that means $S$ is a flasque torus and $E$ is quasi-trivial. They proved that the connecting morphism produces an isomorphism:
\begin{equation*}
        \begin{tikzpicture}[baseline= (a).base]
        \node[scale=1] (a) at (0,0){
        \begin{tikzcd}
       T(k)/R\arrow[r, "\sim"] & \Hh^1(k,S),
       \end{tikzcd}};
        \end{tikzpicture}
\end{equation*}
where $T(k)/R$ denotes the set of classes of $R$-equivalence. In this way, they managed to prove that the number of classes of $R$-equivalence of an algebraic torus over some particular fields, like number fields, are finite by using cohomological finiteness theorems for flasque tori.

Further developments in this direction have been made by Colliot-Th\'{e}l\`{e}ne, in \cite{MR2404747}. He extended the notion of flasque resolutions to a connected linear algebraic group $G$:
\begin{equation*}
        \begin{tikzpicture}[baseline= (a).base]
        \node[scale=1] (a) at (0,0){
        \begin{tikzcd}
       1\arrow[r] & S\arrow[r] & \Tilde{G}\arrow[r] & G\arrow[r] & 1.
       \end{tikzcd}};
        \end{tikzpicture}
\end{equation*}
Here, the role of the quasi-trivial torus $E$ is taken by the $\mathbb G_m$-quasi-trivial group $\Tilde{G}$. Then, he proved that the connecting map is bijective when the field is good, that means $k$ has cohomological dimension $2$, and its finite extensions satisfy the index-period property and Serre's conjecture II.

The implications that these techniques had in the investigation of $R$-equivalence could suggest that an extension of flasque resolutions to a larger class of algebraic varieties could be a useful direction to explore. The aim of this paper is to introduce the concept of flasque quasi-resolutions, to investigate which properties from the classical case could be recovered and, at the end, we will use the language of flasque quasi-resolution to prove a slightly stronger version of a theorem of Colliot-Thélène and Kunyavski\u{\i}. 

The paper is structured as follow. The appendix is mainly dedicated to an exact sequence that Sansuc constructed in \cite{MR631309}. We are going to provide a proof that makes explicit the connecting morphism. Furthermore, we recall some well-known results about algebraic groups and torsors. In the first section, we give the definition of flasque quasi-resolution of an algebraic variety and we explore some general properties. The main results about flasque quasi-resolutions are proven in the second section. First of all, we investigate a special case, that is when $X$ is proper or a homogeneous space under a semi-simple group. In the second part, we partially extend the results to a larger class of varieties. We prove the following results about existence and uniqueness of flasque quasi-resolutions:

\begin{propi}
Let $X$ be a algebraic $k$-variety. Suppose that $X(k)$ is non-empty and fix a point $x\in X(k)$. There exists a flasque quasi-resolution of $(X,x)$ if and only if $\emph{Pic}(\overline{X})$ is finitely generated.
\end{propi}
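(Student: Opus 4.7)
The plan is to use Sansuc's exact sequence (proved in the appendix) as a dictionary between $S$-torsors over $X$ and Galois-equivariant homomorphisms into $\pico{X}$, reading it in opposite directions for the two implications.

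For the \emph{only if} direction, suppose a flasque quasi-resolution $1 \to S \to Y \to X \to 1$ exists: then $\pi\colon Y \to X$ is an $S$-torsor with $S$ a flasque torus, and the quasi-triviality of $Y$ forces $\pico{Y}=0$. The relevant tail of Sansuc's sequence for $\pi$ reads
\[
\hat S \longrightarrow \pico{X} \longrightarrow \pico{Y} = 0,
\]
so $\pico{X}$ is a quotient of the finitely generated Galois module $\hat S$ and is therefore itself finitely generated.

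For the converse, assume $M := \pico{X}$ is finitely generated. First I would pick a finitely generated permutation Galois lattice $\hat S$ admitting a surjection $\varphi\colon \hat S \twoheadrightarrow M$; such a $\varphi$ exists because the permutation module built from Galois orbits of any finite generating set of $M$ maps onto $M$. Let $S$ be the torus with character lattice $\hat S$: since $\hat S$ is permutation, $S$ is quasi-trivial and in particular flasque. Next, using the rational point $x \in X(k)$ to rigidify and the explicit description of Sansuc's connecting morphism furnished by the appendix, I would realise $\varphi$ as the Sansuc invariant of a genuine $S$-torsor $\pi\colon Y \to X$: the base point kills the $\Hh^1(k,S)$-ambiguity, and the explicit form of the connecting map shows that any Galois-equivariant homomorphism $\hat S \to \pico{X}$ arises in this way from some pointed torsor. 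With such a $\pi$ in hand, reapplying Sansuc's sequence yields
\[
\hat S \xrightarrow{\ \varphi\ } \pico{X} \longrightarrow \pico{Y} \longrightarrow 0,
\]
and surjectivity of $\varphi$ forces $\pico{Y}=0$; the analogous control of units on $\overline{Y}$ is read off from the initial part of the same sequence using the permutation hypothesis on $\hat S$.

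The hard part will be the middle step of the converse, namely producing a concrete $S$-torsor whose Sansuc invariant is the prescribed $\varphi$. Everything hinges on the explicit computation of the connecting morphism done in the appendix, and the requirement of a rational point in the statement enters precisely here, since without $x$ an $S$-torsor would only be determined up to a twist by a class in $\Hh^1(k,S)$, leaving no canonical way to lift a prescribed Galois homomorphism to a torsor.
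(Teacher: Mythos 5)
Your ``only if'' direction is exactly the paper's argument (the tail $\widehat S\to\pico X\to\pico Y=0$ of Sansuc's sequence) and is fine. The converse, however, has a genuine error at its core: you choose $\widehat S$ to be a \emph{permutation} lattice surjecting onto $M=\pico X$, but the kernel of such a surjection $\varphi$ is in general only a \emph{coflasque} lattice (that is precisely a coflasque resolution of $M$), not a permutation one. Sansuc's sequence gives $0\to \U(X)\to \U(Y)\to \ker\varphi\to 0$, so $\U(Y)$ has no reason to be a permutation module and $Y$ has no reason to be $\mathbb G_m$-quasi-trivial: what your construction produces is a coflasque quasi-resolution in the sense of Definition \ref{defmia2}, not a flasque one. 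The roles must be dualized: take a flasque resolution $0\to P\to \widehat S\to M\to 0$ of the lattice $M$, with $P$ permutation and $\widehat S$ flasque (Proposition \ref{fame}); then $S$ is a flasque torus and the kernel of the type map is permutation, which is what Lemma \ref{nonso} actually requires. The observation ``$\widehat S$ permutation, hence $S$ flasque'' is true but beside the point --- the flasqueness of $S$ was never the delicate condition; the permutation property of $\U(Y)$ is.

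Two further points you defer to ``the hard part'' are exactly where the paper does real work, and they do not go through as stated. First, the surjectivity of the type map $\Hh^1(X,S)\to \Hom_{\Gamma_k}(\widehat S,\pico X)$ (Proposition \ref{sequesa}), which is what lets you realise a prescribed $\varphi$ as the type of a pointed torsor, is only available under the hypothesis $\U(X)=0$; for general $X$ the descent sequence has additional terms and your lifting step is not justified. Second, even granting such a torsor, the subobject $\U(X)\subset\U(Y)$ remains uncontrolled, since $\U(X)$ need not be permutation. The paper resolves both difficulties at once by passing to a smooth compactification $X_c$ (so that $\U(X_c)=0$ while $\pico{X_c}$ is still finitely generated), building the flasque quasi-resolution there as the pushforward of the universal torsor along $M\to S_0$ (Proposition \ref{vera}), and then restricting along the open immersion $X\hookrightarrow X_c$, which preserves flasque quasi-resolutions (Proposition \ref{colliotprati}). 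Note also that smoothness of $X$ is used throughout this argument, even though the introductory statement suppresses it.
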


\begin{propi}
Let $X$ be a algebraic $k$-variety. Suppose that $X(k)$ is non-empty and fix a point $x\in X(k)$. Let $Y_1\overset{S_1}{\longrightarrow} X$ and $Y_2\overset{S_2}{\longrightarrow} X$ be flasque quasi-resolutions of $(X,x)$. We can establish the following properties.
\begin{itemize}
    \item There exists an isomorphism of $k$-schemes $Y_1\times_k S_2\simeq Y_2\times_k S_1$. 
    \item There are two quasi-trivial tori $P_1$ and $P_2$, such that $P_1\times_k S_2\simeq P_2\times_k S_1$ as group schemes. 
    \item There exist two morphisms $S_1\rightarrow P_1$ and $S_2\rightarrow P_2$, that induce a bijection $\emph{Ker}(\mathrm H^n(k, S_1)\rightarrow \mathrm H^n(k,P_1))\simeq \emph{Ker}(\mathrm H^n(k, S_2)\rightarrow \mathrm H^n(k,P_2))$, for all non-negative integer $n$. 
\end{itemize}
As a consequence, $\mathrm H^1(k, S_1)\simeq \mathrm H^1(k, S_2)$.
\end{propi}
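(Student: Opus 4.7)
The starting point is the fibered product $Z := Y_1 \times_X Y_2$. Base-changing the torsor structures on $Y_1 \to X$ (under $S_1$) and $Y_2 \to X$ (under $S_2$) equips $Z$ with three compatible torsor structures: $Z \to Y_1$ is an $S_2$-torsor, $Z \to Y_2$ is an $S_1$-torsor, and $Z \to X$ is an $(S_1 \times S_2)$-torsor. The distinguished points $y_1, y_2$ combine into a $k$-point $z = (y_1, y_2) \in Z(k)$ above $x$, trivializing each of these torsors at the marked fiber.

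For the first bullet it suffices to exhibit $k$-scheme isomorphisms $Z \simeq Y_1 \times_k S_2$ and $Z \simeq Y_2 \times_k S_1$. I would apply Sansuc's exact sequence (the one revisited in the appendix) to the $S_2$-torsor $Z \to Y_1$, obtaining the exact sequence
\[
0 \longrightarrow \h{1}(k, S_2) \longrightarrow \h{1}(Y_1, S_2) \longrightarrow \Hom_{\Gamma}(\hat{S_2}, \pico{Y_1}).
\]
The defining feature of a flasque quasi-resolution is that $\pico{Y_i}$ is a quasi-trivial (permutation) Galois module, so the right-hand group coincides with the character group of a quasi-trivial torus $P_1$; the image of $[Z \to Y_1]$ there provides a morphism $S_2 \to P_1$. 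Using the marked point $z$ to kill the $\h{1}(k, S_2)$-contribution, the exact sequence forces $Z \to Y_1$ to become trivial after push-out along $S_2 \to P_1$, yielding $Z \simeq Y_1 \times_k S_2$ as $k$-schemes. The symmetric argument on $Z \to Y_2$ produces $S_1 \to P_2$ together with $Z \simeq Y_2 \times_k S_1$, and composing the two isomorphisms proves the first bullet.

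The same construction immediately furnishes the second bullet: pushing the $(S_1 \times S_2)$-torsor $Z \to X$ out along $S_1 \times S_2 \to P_2 \times P_1$ produces a single scheme that may be read off in two ways using the two sides of the previous step, which translates into a group-scheme isomorphism $P_1 \times_k S_2 \simeq P_2 \times_k S_1$. For the third bullet, I would feed the morphisms $S_i \to P_i$ into the associated long exact sequences in Galois cohomology; the group-scheme isomorphism of the second bullet identifies the two exact sequences, so that a diagram chase produces a natural bijection $\ker(\h{n}(k, S_1) \to \h{n}(k, P_1)) \simeq \ker(\h{n}(k, S_2) \to \h{n}(k, P_2))$ for every $n \geq 0$. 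Specializing to $n = 1$ and using the quasi-triviality of the $P_i$ (which gives $\h{1}(k, P_i) = 0$) yields the closing identification $\h{1}(k, S_1) \simeq \h{1}(k, S_2)$.

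The main technical obstacle is the careful application of Sansuc's sequence to $Z \to Y_1$: one has to verify that the connecting map $\h{1}(Y_1, S_2) \to \Hom_\Gamma(\hat{S_2}, \pico{Y_1})$ really extracts a morphism $S_2 \to P_1$ with $P_1$ quasi-trivial. This hinges on unpacking the definition of a flasque quasi-resolution to guarantee that $\pico{Y_i}$ is of permutation type (not merely coflasque) and on tracking the marked point $z$ through the relevant connecting morphisms; once these compatibilities are in place, the three bullets follow by routine diagram chases.
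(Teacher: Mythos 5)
Your starting point (the fibre product $Z=Y_1\times_X Y_2$ with its two torsor structures and the marked $k$-point $z=(y_1,y_2)$ over $x$) is exactly the paper's, but the way you manufacture the quasi-trivial tori $P_i$ rests on a misreading of the definition. A flasque quasi-resolution requires $Y_i$ to be $\mathbb G_m$-quasi-trivial, which by Definition \ref{defmia} means $\text{Pic}(\overline{Y_i})=0$ and it is $\U(Y_i)$ that is a permutation lattice --- not, as you assert, that $\text{Pic}(\overline{Y_i})$ is ``of permutation type.'' Consequently $\text{Hom}_{\Gamma_k}(\widehat{S_2},\text{Pic}(\overline{Y_1}))=0$ and no morphism $S_2\rightarrow P_1$ can be extracted from it. The correct source of the $P_i$ is the unit part of Sansuc's sequence applied to $Z\rightarrow Y_i$, namely $0\rightarrow \U(Y_i)\rightarrow \U(Z)\rightarrow \widehat{S_j}\rightarrow 0$, with $P_i$ the quasi-trivial torus dual to the permutation lattice $\U(Y_i)$. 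Your argument for the first bullet is also a non sequitur: triviality of the push-out of $Z\rightarrow Y_1$ along some $S_2\rightarrow P_1$ does not imply triviality of the $S_2$-torsor itself. What one actually uses is Proposition \ref{concentrato} (torsors under a flasque torus over a $\mathbb G_m$-quasi-trivial variety are constant), together with $Z(k)\neq\emptyset$, to conclude that $Z\rightarrow Y_1$ is the trivial torsor, hence $Z\simeq Y_1\times_k S_2$, and symmetrically $Z\simeq Y_2\times_k S_1$.

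The second gap is that bullets two and three cannot be obtained by ``reading off a single scheme in two ways'': a $k$-scheme isomorphism between torsors carries no group-scheme structure. The paper dualizes the two unit sequences above into extensions of tori $1\rightarrow S_2\rightarrow R\rightarrow P_1\rightarrow 1$ and $1\rightarrow S_1\rightarrow R\rightarrow P_2\rightarrow 1$ with the same middle term $R$ (dual to $\U(Z)$), and then invokes the key lemma that such extensions split because $\mathrm{Ext}^1$ of a permutation lattice by a flasque lattice vanishes (\cite[Lemma 1]{requiv}). The splitting gives the group isomorphism $P_1\times_k S_2\simeq R\simeq P_2\times_k S_1$, and it also produces the short exact (and split) cohomology sequences $0\rightarrow \mathrm H^n(k,S_2)\rightarrow \mathrm H^n(k,R)\rightarrow \mathrm H^n(k,P_1)\rightarrow 0$ and its companion, from which the kernel identification of the third bullet follows by a diagram chase through the common term $\mathrm H^n(k,R)$. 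This splitting lemma is the essential ingredient missing from your plan; without it neither the group-scheme isomorphism nor the exactness on the left of the cohomology sequences is available.
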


Given a flasque quasi-resolution $Y\overset{S}{\longrightarrow} X$, we can define a map by evaluating on $k$-points:
\begin{equation*}
        \begin{tikzpicture}[baseline= (a).base]
        \node[scale=1] (a) at (0,0){
        \begin{tikzcd}
        X(k) \arrow[r, "\text{ev}"] & \Hh^1(k,S). 
       \end{tikzcd}};
        \end{tikzpicture}
\end{equation*}
We will prove that for homogeneous spaces under linear algebraic $k$-group the evaluation map does not depend by the choice of the flasque quasi-resolution or the marked point. At the end of the section we extend the well-definition of the evaluation map to a larger class of algebraic varieties, like homogeneous spaces. The last section is dedicated to study the $R$-equivalence of homogeneous spaces. We provide a second construction of flasque quasi-resolutions. The advantage of this more explicit construction is that in this way it is easier to investigate the evaluation map. We conclude by proving a slightly generalization of a theorem of Colliot-Th\'{e}l\`{e}ne and Kunyavski\u{\i}:

\begin{thmi}
Let $X$ be a homogeneous space under a connected linear algebraic $k$-group with connected stabilizer. Let $x$ be a $k$-point of $X$. The evaluation map induced by a flasque quasi-resolution of $(X,x)$ is surjective if the field $k$ is good.
\end{thmi}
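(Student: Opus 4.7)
The plan is to reduce the surjectivity to Colliot-Th\'el\`ene's theorem \cite{MR2404747} on connected linear algebraic groups. By the uniqueness statement in the second proposition above, the evaluation map $\ev: X(k) \to \h{1}(k, S)$ is, up to canonical identification of its target, independent of the choice of flasque quasi-resolution, so I may freely work with the explicit one furnished by the construction of the last section. Write $X = G/H$ with $G$ a connected linear algebraic $k$-group acting transitively on $X$ and $H = \text{Stab}_G(x)$ connected. Fix a flasque resolution $1 \to S_G \to \tilde G \to G \to 1$ in the sense of \cite{MR2404747}, with $\tilde G$ a $\mathbb G_m$-quasi-trivial group and $S_G$ a flasque torus central in $\tilde G$. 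Because $S_G$ is central, $\tilde G$ acts transitively on $X$ via the projection $\tilde G \to G$, and the stabilizer $\tilde H$ of $x$ in $\tilde G$ is connected and fits in a central extension $1 \to S_G \to \tilde H \to H \to 1$.

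The strategy is then to check that the explicit construction of the last section, applied to $(X, x)$ from these data, produces a flasque quasi-resolution $Y \to X$ with structural flasque torus $S$ fitting in a $\tilde G$-equivariant commutative diagram
\begin{equation*}
\begin{tikzcd}
\tilde G \arrow[r] \arrow[d] & Y \arrow[d] \\
G \arrow[r] & X
\end{tikzcd}
\end{equation*}
together with a homomorphism of flasque tori $\varphi: S_G \to S$ compatible with the columns (which are torsors under $S_G$ and $S$, respectively). The cokernel of $\varphi$ on character modules should turn out to be a permutation $\text{Gal}(\overline k/k)$-module, so the induced map $\varphi_*: \h{1}(k, S_G) \to \h{1}(k, S)$ is surjective by Hilbert's Theorem 90. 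A diagram chase then yields the crucial compatibility $\ev(g \cdot x) = \varphi_*(\delta_G(g))$ for every $g \in G(k)$, where $\delta_G: G(k) \to \h{1}(k, S_G)$ is the connecting map attached to the flasque resolution of $G$.

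With this compatibility in hand, the theorem is immediate: by \cite{MR2404747}, over a good field $k$ the map $\delta_G$ is surjective; combined with the surjectivity of $\varphi_*$, for every $\xi \in \h{1}(k, S)$ one finds $\eta \in \h{1}(k, S_G)$ with $\varphi_*(\eta) = \xi$ and then $g \in G(k)$ with $\delta_G(g) = \eta$, so that $g \cdot x \in X(k)$ satisfies $\ev(g \cdot x) = \xi$. The main obstacle lies in establishing the commutative diagram and the compatibility $\ev(g \cdot x) = \varphi_*(\delta_G(g))$, which requires a careful comparison of the second construction of flasque quasi-resolutions with the flasque resolution of $G$ via the central extension $1 \to S_G \to \tilde H \to H \to 1$; the identification of the cokernel of $\varphi$ as a permutation module is a secondary technical point. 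Once both are in place, the two-step lift argument above is purely formal.
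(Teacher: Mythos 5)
Your reduction has a genuine gap at the step where you claim $\varphi_*:\mathrm{H}^1(k,S_G)\rightarrow \mathrm{H}^1(k,S)$ is surjective. The compatibility $\mathrm{ev}(g\cdot x)=\varphi_*(\delta_G(g))$ is correct (the fibre of $\widetilde G\rightarrow X$ over $g\cdot x$ is the pushforward of the $S_G$-torsor $\pi^{-1}(g)$ along $S_G\hookrightarrow \widetilde H$, and pushing further to $S$ gives exactly $\varphi_*(\delta_G(g))$), but it only shows that the image of the evaluation map contains $\varphi_*(\mathrm{Im}\,\delta_G)\subseteq \mathrm{Im}\,\varphi_*$, and $\mathrm{Im}\,\varphi_*$ is in general a proper subset of $\mathrm{H}^1(k,S)$. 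Indeed $\varphi$ factors as $S_G\rightarrow \widetilde H^{\mathrm{tor}}=M\hookrightarrow S$, and the cokernel of $S_G\rightarrow \widetilde H^{\mathrm{tor}}$ is $H^{\mathrm{tor}}$, the maximal toric quotient of the stabilizer --- an arbitrary torus, not a permutation module. The cleanest counterexample: take $G$ semi-simple simply connected (or just $\mathbb G_m$-quasi-trivial, to which one always reduces) and $H=T$ a torus. Then $S_G=1$, $\delta_G=0$, $\varphi_*=0$, so your argument only produces the trivial class; yet for $X=SL_n/T$ one has $X(k)/R\simeq \mathrm{H}^1(k,S)$ (Proposition \ref{semplice}), which is nonzero for suitable $T$ even over $p$-adic fields, which are good. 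Equivalently: for $g\in G(k)$ with $G$ quasi-trivial the fibre of $Y\rightarrow X$ over $g\cdot x$ is trivial, so the whole $G(k)$-orbit of $x$ is sent to $0$ by $\mathrm{ev}$; the surjectivity must come from the \emph{other} $G(k)$-orbits in $X(k)$, i.e.\ from the nontrivial classes in $\ker\bigl(\mathrm{H}^1(k,H)\rightarrow \mathrm{H}^1(k,G)\bigr)$. Your scheme, which reduces everything to the connecting map of a flasque resolution of $G$, cannot see these.

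What is actually needed, and what the paper proves, is the surjectivity of the composite $X(k)\rightarrow \mathrm{H}^1(k,H)\rightarrow \mathrm{H}^1(k,H^{\mathrm{tor}})\rightarrow \mathrm{H}^1(k,S)$. The last arrow is the easy Hilbert-90 step ($\mathrm{H}^1(k,E_0)=0$ for the quasi-trivial quotient $E_0=S/M$). The first arrow is surjective because, after reducing to $G$ quasi-trivial, $\mathrm{H}^1(k,G)=1$ over a good field. The genuinely hard step, which has no counterpart in your proposal, is the surjectivity of $\mathrm{H}^1(k,H)\rightarrow \mathrm{H}^1(k,H^{\mathrm{tor}})$: the paper obtains it (in both of its proofs) from Proposition \ref{excoro}, i.e.\ from the existence over good fields of dominating torsors/rational points for homogeneous spaces whose geometric stabilizer is connected with trivial toric quotient --- this is where the results of Borovoi and Colliot-Th\'el\`ene--Kunyavski\u{\i} and the full strength of the good-field hypothesis enter. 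Colliot-Th\'el\`ene's theorem on the surjectivity of $\delta_G$, which you invoke instead, addresses the contribution of the group $G$ rather than that of the stabilizer $H$, and becomes vacuous precisely after the standard reduction to $G$ quasi-trivial.
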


We provide two proofs of this result, both of them are based on the same idea of the original proof of Colliot-Th\'{e}l\`{e}ne and Kunyavski\u{\i}. This result naturally raises questions about the need to assume that the field $k$ is good. The author uses the techniques developed in this article to investigate on such a question, in \cite{pirani2023nonsurjectivity}. In particular, the author proves that, without assuming that the field $k$ is good, counterexamples can be constructed. For example, over fields of cohomological dimension $2$ and $2$-local fields such as $\mathbb Q_7((t))$.

\subsection*{Notations}

The letter $k$ will denote a field of characteristic zero. We fix an algebraic closure $\overline k$ and we denote by $\Gamma_k$ the absolute Galois group of $k$. An algebraic $k$-variety $X$ is a geometrically integral, separated scheme of finite type over $k$. We denote by $\overline X$ the base change $X\times_k \overline k$, and by $\U (X)$ the $\Gamma_k$-module $k[\overline X]^*/\overline k ^*$ (be careful that in Sansuc's paper $\U(X)$ is the abelian group $k[X]^*/ k ^*$). The letter $G$ will denote a linear algebraic $k$-group, that is a group scheme over $k$ whose underlying scheme is affine, separated, and of finite type over $k$. We denote by $\widehat G$ the $\Gamma_k$-module of characters $\text{Hom}_{\overline{k}-\text{grp}}(\overline{G}, \mathbb G_{m,\overline{k}})$. The letter $\Gamma$ will denote a (pro)finite group. Given a $\Gamma$-lattice $N$, we will denote its dual by $N^0$.

\subsection*{Acknowledgements}

I warmly thank my advisors Philippe Gille and Tamás Szamuely for their support, and for several and helpful discussions. The author acknowledges the MIUR Excellence Department Project awarded to the Department of Mathematics, University of Pisa, CUP I57G22000700001. The author is member of the Italian GNSAGA-INDAM. 

\section{Flasque tori and flasque quasi-resolutions}

The aim of this section is to give the definition of flasque (and coflasque) quasi-resolutions. We are going to investigate some general properties that follow easily from the definitions. But first, let us refer to classical literature, following \cite{requiv}. The letter $\Gamma$ denotes a profinite group.

\begin{Def}
A $\Gamma$-lattice $N$ is:
\begin{itemize}
    \item flasque, if $\Hh^1(\Gamma', N^0)=0$ for all open subgroups $\Gamma'$ of $\Gamma$;
    \item coflasque, if $N^0$ is flasque;
    \item permutation, if $N$ has a $\mathbb Z$-basis permuted by $\Gamma$;
    \item stably permutation, if there exist permutation lattices $P_1$ and $P_2$ such that $N\oplus P_1 \simeq P_2$.
\end{itemize}
\end{Def}

\begin{oss}
By Shapiro's lemma, all (stably) permutation lattices are both flasque and coflasque.
\end{oss}

\begin{oss}
Let $\Gamma'\rightarrow \Gamma$ be a morphism of profinite groups. If a $\Gamma$-lattice $N$ is flasque, coflasque, permutation or stably permutation as $\Gamma$-module, then it is as $\Gamma'$-module (see \cite[Remark 2]{requiv}). 
\end{oss}

\begin{oss}
Flasque lattices arise in a natural way from the geometry of certain algebraic $k$-varieties. Let $X$ be a homogeneous space under a connected linear algebraic $k$-group. Since the characteristic of $k$ is zero, we can fix a smooth compactification $X_c$ of $X$, because of Hironaka (see \cite{MR0199184}). In \cite[Theorem 5.1]{MR2237268}, Colliot-Th\'{e}l\`{e}ne and Kunyavski\u{\i} established that $\pico{X_c}$ is a flasque $\Gamma_k$-lattice, extending previous results on algebraic tori, due to Voskresenski\u{\i} (see \cite[Section 4.6]{MR1634406}), and on connected linear algebraic $k$-groups, due to Borovoi and Kunyavski\u{\i} (see \cite[Theorem 3.2]{MR2054399}). 
\end{oss}

We can define an equivalence relation on the set of lattices:

\begin{Def}
Let $N_1$ and $N_2$ be $\Gamma$-lattices. We say they are similar if there are two permutation lattices $P_1$ and $P_2$, such that $N_1\oplus P_1 \simeq N_2\oplus P_2$. 
\end{Def}

We recall the existence of flasque and coflasque resolutions: 

\begin{prop}
\label{fame}
Let $N$ be a $\Gamma$-module. Suppose that $N$ is finitely generated as abelian group. 
\begin{itemize}
    \item There exists a flasque resolution
        \begin{equation*}
        \begin{tikzpicture}[baseline= (a).base]
        \node[scale=1] (a) at (0,0){
        \begin{tikzcd}
       1\arrow[r] & P \arrow[r] & F \arrow[r] & N \arrow[r] & 1,
       \end{tikzcd}};
        \end{tikzpicture}
\end{equation*}
    where $P$ is a permutation lattice and $F$ is a flasque lattice.
    \item There exists a coflasque resolution
        \begin{equation*}
        \begin{tikzpicture}[baseline= (a).base]
        \node[scale=1] (a) at (0,0){
        \begin{tikzcd}
       1\arrow[r] & C \arrow[r] & P \arrow[r] & N \arrow[r] & 1,
       \end{tikzcd}};
        \end{tikzpicture}
\end{equation*}
    where $P$ is a permutation lattice and $C$ is a coflasque lattice.
\end{itemize}
\end{prop}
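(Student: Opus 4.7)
The plan is to prove part (ii) by a direct cover-and-kernel construction, and then derive part (i) via a pushout built on top of two applications of (ii).

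For (ii), since $N$ is finitely generated as an abelian group, the $\Gamma$-action factors through a finite quotient $G$. For every open subgroup $\Gamma' \subset \Gamma$ and every element $n$ in a finite generating set of $N^{\Gamma'}$, include in $P$ a summand $\mathbb{Z}[\Gamma/\Gamma']$ mapping $\Gamma$-equivariantly to $N$ by sending the identity coset to $n$. Finitely many subgroups (pulled back from $G$) suffice, so $P$ is a finitely generated permutation lattice, and the induced map $P^{\Gamma'} \to N^{\Gamma'}$ is surjective for every open $\Gamma' \subset \Gamma$. Setting $C := \ker(P \to N)$, the long exact sequence in $\Gamma'$-cohomology reads
\begin{equation*}
P^{\Gamma'} \twoheadrightarrow N^{\Gamma'} \to \Hh^1(\Gamma', C) \to \Hh^1(\Gamma', P).
\end{equation*}
The rightmost group vanishes by Shapiro's lemma (each summand $\mathbb{Z}[\Gamma/\Gamma'']$ has $H^1$ computed via a finite subgroup of $\Gamma'$ acting trivially on $\mathbb{Z}$, which is $0$), so $\Hh^1(\Gamma', C) = 0$ and $C$ is coflasque.

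For (i), first apply (ii) to $N$ to obtain $0 \to C \to Q_1 \to N \to 0$ with $Q_1$ permutation and $C$ coflasque, and then apply (ii) to the dual lattice $C^0$ to obtain $0 \to C_1 \to Q_2 \to C^0 \to 0$ with $Q_2$ permutation and $C_1$ coflasque. Dualizing this second sequence (the functor $\Hom(-,\mathbb{Z})$ is exact on $\mathbb{Z}$-free modules) yields $0 \to C \to Q_2^0 \to C_1^0 \to 0$, in which $Q_2^0$ is permutation (self-duality of permutation lattices) and $C_1^0$ is flasque by the definition in the excerpt ($C_1$ coflasque $\Longleftrightarrow$ $C_1^0$ flasque). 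Now form the pushout $F$ of the two injections $C \hookrightarrow Q_1$ and $C \hookrightarrow Q_2^0$; standard diagram chasing gives two short exact sequences
\begin{equation*}
0 \to Q_2^0 \to F \to N \to 0 \qquad \text{and} \qquad 0 \to Q_1 \to F \to C_1^0 \to 0.
\end{equation*}
The first one is of the required shape with $Q_2^0$ permutation, and it remains to verify that $F$ is flasque.

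For this, dualize the second sequence to get $0 \to C_1 \to F^0 \to Q_1^0 \to 0$; the corresponding long exact sequence contains
\begin{equation*}
\Hh^1(\Gamma', C_1) \to \Hh^1(\Gamma', F^0) \to \Hh^1(\Gamma', Q_1^0),
\end{equation*}
whose outer terms vanish because $C_1$ is coflasque and $Q_1^0$ is permutation. Hence $\Hh^1(\Gamma', F^0) = 0$ for every open $\Gamma' \subset \Gamma$, i.e.\ $F$ is flasque. The main technical obstacle is precisely this last verification: applying (ii) twice and pushing out is designed so that, after dualization, the \emph{second} short exact sequence has both outer terms with vanishing $H^1$, which is exactly what flasqueness of $F$ requires.
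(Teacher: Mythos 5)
Your argument is correct: the permutation cover surjective on $\Gamma'$-invariants yields the coflasque kernel, and the dualize-then-pushout step (with flasqueness of $F$ read off from the dual of the second pushout sequence) is exactly the construction of Colliot-Th\'{e}l\`{e}ne and Sansuc. The paper itself gives no proof here, only citing \cite[Lemma 3]{requiv} and \cite[Lemma 0.6]{MR878473}, and what you have written is a faithful, self-contained reconstruction of that standard argument (the only cosmetic slip is the phrase ``finite subgroup of $\Gamma'$'' in the Shapiro step, which should read ``open, i.e.\ finite-index, subgroup'').
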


\begin{proof}
See \cite[Lemma 3]{requiv} or \cite[Lemma 0.6]{MR878473}.
\end{proof}

\begin{oss}
If $N$ is a $\Gamma$-lattice, then there exist dual versions of the exact sequences of previous proposition. 
\end{oss}

The following proposition shows some sort of uniqueness of flasque and coflasque resolutions:

\begin{prop}
Let $1\rightarrow P_1\rightarrow F \rightarrow N\rightarrow 1$ and $1\rightarrow C\rightarrow P_2 \rightarrow N\rightarrow 1$ be flasque and coflasque resolutions of $N$, respectively. The similarity classes of $F$ and $C$ depends only on $N$. 
\end{prop}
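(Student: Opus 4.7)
The plan is to prove the two statements in parallel through a Schanuel-type argument based on fiber products of $\Gamma$-lattices.

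First I would treat the flasque case. Let $1\to P_1\to F\to N\to 1$ and $1\to P_1'\to F'\to N\to 1$ be two flasque resolutions and form the pullback $E := F\times_N F'$ in the category of $\Gamma$-modules. It fits into the two short exact sequences
\[
0 \to P_1' \to E \to F \to 0 \qquad\text{and}\qquad 0 \to P_1 \to E \to F' \to 0,
\]
the kernels being identified by chasing the universal property of the pullback. The main step is to show that both of these sequences split. For the first one, writing the permutation lattice $P_1'$ as $\bigoplus_i \mb{Z}[\Gamma/\Gamma_i]$ for suitable open subgroups $\Gamma_i\le\Gamma$ and applying Shapiro's lemma, I would obtain
\[
\text{Ext}^1_{\Gamma}(F, P_1') \simeq \bigoplus_i \text{Ext}^1_{\Gamma_i}(F, \mb{Z}) \simeq \bigoplus_i \h{1}(\Gamma_i, F^0),
\]
which vanishes because $F$ is flasque. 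The second sequence splits by the same argument with the roles of $F$ and $F'$ reversed. Therefore $F\oplus P_1' \simeq E \simeq F' \oplus P_1$, which is exactly the assertion that the similarity classes of $F$ and $F'$ coincide.

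For coflasque resolutions I would apply the dual construction. Given $1\to C\to P_2\to N\to 1$ and $1\to C'\to P_2'\to N\to 1$, I form the pullback $E := P_2 \times_N P_2'$ and obtain
\[
0 \to C' \to E \to P_2 \to 0 \qquad\text{and}\qquad 0 \to C \to E \to P_2' \to 0.
\]
Writing $P_2 = \bigoplus_i \mb{Z}[\Gamma/\Gamma_i]$, Shapiro's lemma now yields $\text{Ext}^1_{\Gamma}(P_2, C') \simeq \bigoplus_i \h{1}(\Gamma_i, C')$, which vanishes because the definition of coflasque spelled out means precisely that $\h{1}(\Gamma', C') = 0$ for every open subgroup $\Gamma'\le\Gamma$. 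Both sequences thus split and I conclude $C\oplus P_2' \simeq C'\oplus P_2$.

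The argument is essentially formal once one sets up the fiber product; I do not anticipate any serious obstacle. The only subtle point worth isolating is the recognition that the flasqueness of $F$ (respectively, the coflasqueness of $C$) is calibrated exactly to kill the $\text{Ext}^1$-groups produced by Shapiro's lemma against a permutation lattice, so that the symmetric roles of $F$ and $F'$ (respectively, $C$ and $C'$) in the fiber-product construction translate directly into the desired similarity.
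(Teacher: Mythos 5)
Your argument is correct and is essentially the classical proof of this statement: the paper itself gives no argument, deferring to \cite[Lemma 5]{requiv}, and the proof there is exactly your Schanuel-type fiber-product construction, with the splitting of the two resulting extensions guaranteed by the vanishing of $\text{Ext}^1$ against a permutation lattice (flasqueness of $F$, resp.\ coflasqueness of $C$, being calibrated for precisely this purpose). No gaps; the only point worth recording explicitly is the identification $\text{Ext}^1_{\Gamma}(M,N)\simeq \mathrm{H}^1(\Gamma,\Hom_{\mathbb Z}(M,N))$ for lattices, which you implicitly use and which is standard.
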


\begin{proof}
See \cite[Lemma 5]{requiv}.
\end{proof}

The analogous definition for algebraic tori is the following:

\begin{Def}
Let $T$ be an algebraic torus. We say that $T$ is flasque, coflasque or quasi-trivial, if the $\Gamma_k$-lattice $\widehat T$ is flasque, coflasque or quasi-trivial, respectively.
\end{Def}

\begin{oss}
In analogy to flasque and coflasque resolutions of $\Gamma_k$-modules, we can define them for groups of multiplicative type over $k$.
\end{oss}

The following definition has been introduced by Colliot-Th\'{e}l\`{e}ne. He used it to extend the notion of flasque and coflasque resolutions to connected linear algebraic $k$-groups, in \cite[Proposition-Definition 3.1]{MR2404747} and \cite[Proposition 4.1]{MR2404747}. We are going to do the same for flasque quasi-resolutions.

\begin{Def}
Let $Y$ be an algebraic $k$-variety such that $\pic(\overline{Y})$ is trivial. We say that $Y$ is:
\begin{itemize}
    \item $\mathbb G_m$-quasi-trivial, if $\U(Y)$ is a permutation lattice;
    \item $\mathbb G_m$-coflasque, if $\U(Y)$ is a coflasque lattice.
\end{itemize}
\end{Def}

\begin{oss}
For an algebraic $k$-torus $T$, it is equivalent to be $\mathbb G_m$-quasi-trivial and quasi-trivial, and the same is true for $\mathbb G_m$-coflasque and coflasque. Indeed, the group $\text{Pic}(\overline T)$ is always trivial and the $\Gamma_k$-lattice $\U (T)$ is isomorphic to $\widehat T$, by Proposition \ref{rosigene}.
\end{oss}

In analogy to the tori case, there is the following rigidity property:

\begin{prop}
\label{concentrato}
Let $X$ be an algebraic $k$-variety, and let $T$ be an algebraic torus.
\begin{itemize}
    \item If $X$ is $\mathbb G_m$-quasi-trivial and $T$ is a flasque torus, then $T$-torsors over $X$ are constant.
    \item If $X$ is $\mathbb G_m$-coflasque and $T$ is a quasi-trivial torus, then $T$-torsors over $X$ are trivial. 
\end{itemize}
\end{prop}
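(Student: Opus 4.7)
The plan is to combine the Hochschild--Serre spectral sequence for the cover $\overline X \to X$ with the unit short exact sequence
$$1 \longrightarrow \overline k^* \longrightarrow \overline k[\overline X]^* \longrightarrow \U(X) \longrightarrow 1$$
twisted by $\Hom_{\mathbb Z}(\widehat T, -)$. Both bullets of the proposition force $\pic(\overline X) = 0$, and since $T$ becomes a split torus over $\overline k$, we have $\mathrm H^1(\overline X, T) \simeq \Hom_{\mathbb Z}(\widehat T, \pic(\overline X)) = 0$ as $\Gamma_k$-modules; the five-term sequence of Hochschild--Serre then produces an isomorphism $\mathrm H^1(X, T) \simeq \mathrm H^1(\Gamma_k, T(\overline X))$. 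Applying $\Hom_{\mathbb Z}(\widehat T, -)$ to the unit sequence above, and using the natural identification $\Hom_{\mathbb Z}(\widehat T, M) \simeq \widehat T^0 \otimes_{\mathbb Z} M$ valid when $\widehat T$ is finitely generated and free, I would obtain a short exact sequence of $\Gamma_k$-modules
$$1 \longrightarrow T(\overline k) \longrightarrow T(\overline X) \longrightarrow \widehat T^0 \otimes_{\mathbb Z} \U(X) \longrightarrow 1,$$
whose long exact sequence in $\Gamma_k$-cohomology yields
$$\mathrm H^1(k, T) \longrightarrow \mathrm H^1(X, T) \longrightarrow \mathrm H^1\bigl(\Gamma_k,\, \widehat T^0 \otimes \U(X)\bigr).$$

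For the first bullet, writing $\U(X) \simeq \bigoplus_i \mathbb Z[\Gamma_k/\Gamma_i]$ with $\Gamma_i$ open and combining the projection formula with Shapiro's lemma rewrites the right-hand term as $\bigoplus_i \mathrm H^1(\Gamma_i, \widehat T^0)$, and every summand vanishes because $\widehat T$ is flasque. Hence $\mathrm H^1(k, T) \to \mathrm H^1(X, T)$ is surjective, which is exactly the statement that every $T$-torsor on $X$ is constant.

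For the second bullet, writing instead $\widehat T^0 \simeq \bigoplus_i \mathbb Z[\Gamma_k/\Gamma_i]$ (the dual of a permutation lattice is permutation) and applying Shapiro in the other variable produces $\bigoplus_i \mathrm H^1(\Gamma_i, \U(X))$, which vanishes by the coflasque hypothesis on $\U(X)$; the same Shapiro argument for the permutation lattice $\widehat T$ also gives $\mathrm H^1(k, T) = 0$, so altogether $\mathrm H^1(X, T) = 0$. The main technical step to execute carefully is the Galois-equivariant handling of $\Hom_{\mathbb Z}(\widehat T, \U(X)) \simeq \widehat T^0 \otimes \U(X)$ together with the projection formula $N \otimes \mathbb Z[\Gamma_k/\Gamma_i] \simeq \mathrm{Ind}_{\Gamma_i}^{\Gamma_k}(N|_{\Gamma_i})$; once these are in place, both bullets collapse to exactly the flasque/coflasque vanishing built into the definitions of flasque torus and $\mathbb G_m$-coflasque variety.
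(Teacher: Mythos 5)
Your argument is correct, and it is essentially the proof that the paper points to: the paper itself only cites Colliot-Th\'{e}l\`{e}ne (Propositions 1.3 and 1.7 of the reference given), whose argument runs exactly through the identification $\mathrm H^1(X,T)\simeq \mathrm H^1(\Gamma_k,\Hom_{\mathbb Z}(\widehat T,\overline k[\overline X]^*))$ coming from Hochschild--Serre and the vanishing of $\Hom(\widehat T,\pic(\overline X))$, followed by the units sequence and the Shapiro/projection-formula reduction to the flasque and coflasque vanishing conditions. Your reconstruction of both bullets, including the surjectivity of $\mathrm H^1(k,T)\rightarrow \mathrm H^1(X,T)$ as the meaning of ``constant'' and the extra Hilbert 90 step giving $\mathrm H^1(k,T)=0$ in the quasi-trivial case, is complete and matches the cited proof.
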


\begin{proof}
See \cite[Proposition 1.3]{MR2404747} and \cite[Proposition 1.7]{MR2404747}.
\end{proof}

\subsection{Definitions and general properties}

The definition of a flasque quasi-resolutions mirrors Colliot-Th\'{e}l\`{e}ne's idea for flasque resolutions of groups. We will indeed see that for groups, the two notions coincide. We will provide the definition for every variety, but we will see that it only makes sense under certain assumptions.

\begin{Def}
\label{defmia}
Let $X$ be an algebraic $k$-variety with $X(k)\neq \emptyset$. We fix a point $x\in X(k)$. A torsor $Y\longrightarrow X$ under a flasque torus $S$ is a flasque quasi-resolution of $(X,x)$ if the following conditions are satisfied:
\begin{itemize}
    \item the $k$-scheme $Y$ is a $\mathbb G_m$-quasi-trivial algebraic $k$-variety;
    \item the fibre over $x$ has a $k$-rational point (i.e. the torsor over $x$ is the trivial one).
\end{itemize}
\end{Def}

\begin{Def}
\label{defmia2}
Let $X$ be an algebraic $k$-variety with $X(k)\neq \emptyset$. A torsor $Y\longrightarrow X$ under a quasi-trivial torus $P$ is a coflasque quasi-resolution of $X$ if $Y$ is a $\mathbb G_m$-coflasque algebraic $k$-variety.
\end{Def}

\begin{oss}
In the definition of coflasque quasi-resolutions there is no mark point. Indeed, since $P$ is quasi-trivial, the torsors over the $k$-points of $X$ are the trivial ones.
\end{oss}

A necessary condition for the existence of flasque (and coflasque) quasi-resolutions is provided by the following proposition:

\begin{prop}
Let $X$ be a smooth algebraic $k$-variety with $X(k)\neq \emptyset$. If there exists a flasque (or coflasque) quasi-resolution of $X$, then $\emph{Pic}(\overline{ X})$ is finitely generated.
\end{prop}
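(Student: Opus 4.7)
The plan is to invoke the geometric form of Sansuc's exact sequence (the main object of the appendix) for the torsor $\overline Y\to\overline X$ obtained by base change to $\overline k$. The point is that the hypothesis built into the definition of a flasque (or coflasque) quasi-resolution forces $\pico{Y}=0$, so Sansuc's sequence will squeeze $\pico{X}$ between two finitely generated abelian groups.

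More concretely, by Definition~\ref{defmia} (respectively Definition~\ref{defmia2}) the variety $Y$ is $\mathbb G_m$-quasi-trivial (respectively $\mathbb G_m$-coflasque), and in either case this condition includes the vanishing $\pico{Y}=0$. Let $G$ denote the structural torus of the quasi-resolution, a flasque torus $S$ in the first case or a quasi-trivial torus $P$ in the second. Base-changing to $\overline k$ gives a torsor $\overline Y\to\overline X$ under $\overline G$, and I would apply Sansuc's sequence in its geometric form to obtain the exact sequence
$$\overline k[\overline Y]^*/\overline k^*\longrightarrow \widehat G\longrightarrow \pico{X}\longrightarrow \pico{Y}=0.$$
By exactness at $\pico{X}$, the group $\pico{X}$ is identified with a quotient of $\widehat G$. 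Since $G$ is an algebraic torus, its character module $\widehat G$ is a free abelian group of finite rank, and therefore $\pico{X}$ is finitely generated.

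The only delicate point is having the right version of Sansuc's sequence, extending as far as the Picard groups and reaching $\pico{Y}$ on the right; exactness there relies on the vanishing $\pic(\overline G)=0$, which holds because every torus becomes split over $\overline k$. The first ingredient is exactly what the appendix provides, and the second is classical. Smoothness of $X$ is the hypothesis that lets us apply the sequence without further technicalities, while the assumption $X(k)\neq\emptyset$ is not needed for this finiteness statement per se but only to make sense of a flasque quasi-resolution in the first place.
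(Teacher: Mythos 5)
Your proof is correct and follows essentially the same route as the paper: apply Sansuc's exact sequence to the torsor, use that the definition of a (co)flasque quasi-resolution forces $\text{Pic}(\overline Y)=0$, and conclude that $\text{Pic}(\overline X)$ is a quotient of the finitely generated character lattice of the structural torus. The only cosmetic difference is that you pass to $\overline k$ and worry about $\text{Pic}(\overline G)=0$, neither of which is needed since the paper's version of the sequence is already stated over $k$ and exactness at $\text{Pic}(\overline X)$ alone suffices.
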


\begin{proof}
    Let $Y\overset{T}{\longrightarrow} X$ be a flasque (or coflasque) quasi resolution of $X$. Applying the exact sequence of Sansuc to this torsor, we obtain the following exact sequence:
    \begin{equation*}
        \begin{tikzpicture}[baseline= (a).base]
        \node[scale=1] (a) at (0,0){
        \begin{tikzcd}
       \widehat T \arrow[r] & \pico X \arrow[r] & \pico Y=0 .
       \end{tikzcd}};
        \end{tikzpicture}
    \end{equation*}
    The group $\widehat T$ is finitely generated, and therefore, $\pico X$ is also finitely generated.
\end{proof}

\begin{oss}
Let $X$ be a smooth algebraic $k$-variety. As a corollary of Hironaka's construction, a smooth compactification $X_c$ of $X$ exists. The group $\pico X$ is finitely generated if and only if $\pico {X_c}$ is finitely generated. This equivalence can be established through the following exact sequence:
\begin{equation*}
        \begin{tikzpicture}[baseline= (a).base]
        \node[scale=1] (a) at (0,0){
        \begin{tikzcd}
        0\arrow[r]&\text{U}(X)\arrow[r]&\text{Div}_{\overline{Z}}{\overline{X_c}}\arrow[r]&\pico {X_c} \arrow[r]& \pico X\arrow[r]&0,
       \end{tikzcd}};
        \end{tikzpicture}
\end{equation*}
where the Galois module $\text{Div}_{\overline{Z}}{\overline{X_c}}$ represents the group of divisors with support in the closed subset $\overline{Z}=\overline{X_c}\backslash \overline{X}$. This $\text{Div}_{\overline{F}}{\overline{Z}}$ is a permutation lattice.
\end{oss}

\begin{oss}
Let $X$ be a smooth algebraic $k$-variety. If $X$ is $\overline{k}$-unirational (for example, in the case of homogeneous spaces), then we can establish that $\pico {X_c}$ is finitely generated and free. It is well known fact that the N\'{e}ron-Severi group is finitely generated. To prove our claim, we aim to show that $\text{Pic}^0(\overline{X_c})$ is zero. This group is an abelian variety, and it can either be zero or have non-trivial $n$-torsion for all $n$. By \cite[Proposition 2]{MR109155}, the fundamental group $\pi_1(\overline{X_c})$ is finite. If $n$ is coprime with the order of $\pi_1(\overline{X_c})$, then the $n$-torsion of $\pico {X_c}$ is equal to $\text{H}^1(\overline{X_c},\mu_n)=\text{H}^1(\overline{X_c},\mathbb Z/n\mathbb Z)=0$. This demonstrates that $\text{Pic}^0(\overline{X_c})$ is indeed zero. Furthermore, by \cite[Proposition 3]{MR109155}, the N\'{e}ron-Severi group is a free abelian group. Consequently, we can conclude that $\text{Pic}(\overline{X_c})$ is also free.
\end{oss}

Let $G$ be a connected linear algebraic $k$-group. For $G$ we can compare flasque quasi-resolutions and flasque resolutions of Colliot-Th\'{e}l\`{e}ne. We aim to show that these two definitions coincide in this case. In defining flasque quasi-resolutions, it is necessary to select a base point in $G(k)$. A natural choice for this base point is the identity element, denoted by $1_G \in G(k)$.

\begin{prop}
Let $G$ be a connected linear algebraic $k$-group. 
\begin{itemize}
    \item If the central exact sequence 
    \begin{equation*}
        \begin{tikzpicture}[baseline= (a).base]
        \node[scale=1] (a) at (0,0){
        \begin{tikzcd}
       1 \arrow[r]& S \arrow[r] & H \arrow[r] & G \arrow[r] & 1,
       \end{tikzcd}};
        \end{tikzpicture}
    \end{equation*}
    is a flasque resolution of $G$ in the sense of Colliot-Th\'{e}l\`{e}ne, then the $S$-torsor $H\longrightarrow G$ is a flasque quasi-resolution of $(G,1_G)$. 
    \item If $H\overset{S}{\longrightarrow} G$ is a flasque quasi-resolution of $(G,1_G)$, then $H$ has a structure of connected linear algebraic $k$-group. Moreover, the $S$-torsor $H\longrightarrow G$ fits into a central exact sequence
    \begin{equation*}
        \begin{tikzpicture}[baseline= (a).base]
        \node[scale=1] (a) at (0,0){
        \begin{tikzcd}
       1 \arrow[r]& S \arrow[r] & H \arrow[r] & G \arrow[r] & 1,
       \end{tikzcd}};
        \end{tikzpicture}
    \end{equation*}
    that is a flasque resolution of $G$.
\end{itemize}
\end{prop}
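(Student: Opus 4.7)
The first direction is a direct unwinding of definitions. A flasque resolution of $G$ in Colliot-Th\'el\`ene's sense is, by definition, a central exact sequence $1\to S\to H\to G\to 1$ in which $S$ is a flasque torus and $H$ is a connected linear algebraic $k$-group that is $\mathbb G_m$-quasi-trivial; the associated morphism $p:H\to G$ is an $S$-torsor whose fibre over $1_G$ is the kernel $S\subset H$, and in particular contains $1_S\in S(k)$. All requirements of Definition \ref{defmia} are therefore met.

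For the second direction, let $p:H\to G$ be a flasque quasi-resolution of $(G,1_G)$ and fix $e_H\in p^{-1}(1_G)(k)$. Since $p$ is an $S$-torsor with $S$ smooth and geometrically connected, and $G$ is smooth and geometrically connected, so is $H$. The plan is to use the rigidity statement of Proposition \ref{concentrato}, applied successively to $S$-torsors over the (also $\mathbb G_m$-quasi-trivial) iterated products $H^n$, to bootstrap the group law of $G$ up to a group law on $H$. Concretely, consider $\mu:H\times_k H\to G$, $(h_1,h_2)\mapsto p(h_1)p(h_2)$, and the pullback $S$-torsor $\mu^{*}H$ on $H\times_k H$. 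Rigidity forces $\mu^{*}H$ to descend from $\mathrm{Spec}(k)$; evaluating at the $k$-point $(e_H,e_H)$ identifies its class with that of the trivial torsor $p^{-1}(1_G)$, so $\mu^{*}H$ is globally trivial. Any global section yields a morphism $m_H:H\times H\to H$ with $p\circ m_H=m_G\circ(p\times p)$, and the section can be normalized so that $m_H(e_H,e_H)=e_H$. The inversion morphism $\iota:H\to H$ is produced by the same procedure applied to the inversion morphism of $G$ pulled back along $p$.

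To verify the group axioms, one runs the same rigidity argument on $H^3$: the two morphisms $f_1,f_2:H^3\to H$ obtained by parenthesizing a triple product satisfy $p\circ f_1=p\circ f_2$, so they differ by a morphism to $S$, and comparison with behaviour at $(e_H,e_H,e_H)$ pins them down up to an adjustment that can be absorbed into the choice of $m_H$; a similar analysis handles the unit axiom. Once $H$ is promoted to a group scheme, $p$ is a homomorphism with kernel canonically identified with $S$ through the base point $e_H$, the conjugation morphism $H\to\Aut(S)$ is trivial since $H$ is connected and $\Aut(S)$ is \'etale, so $S$ is central, and $H$ is affine since $p$ is affine. The resulting central sequence $1\to S\to H\to G\to 1$ has $S$ flasque and $H$ $\mathbb G_m$-quasi-trivial by assumption, matching Colliot-Th\'el\`ene's definition. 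I expect the main obstacle to be the normalization in the associativity step: rigidity controls $S$-torsors on $H^n$ but not arbitrary morphisms $H^n\to S$, so the choice of $m_H$ is not uniquely pinned down by $m_H(e_H,e_H)=e_H$, and one must argue more carefully that after a suitable choice associativity does hold --- equivalently, that the class of $H$ in $\mathrm{H}^1(G,S)$ lies in the image of $\mathrm{Ext}^1(G,S)$ classifying central extensions of $G$ by $S$.
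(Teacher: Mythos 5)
Your first bullet is fine and matches the paper, which simply declares that direction obvious: unwinding Colliot-Th\'el\`ene's definition of a flasque resolution (central extension with $S$ flasque and $H$ a quasi-trivial group, i.e.\ $\widehat H\simeq \U(H)$ a permutation lattice and $\pic(\overline H)=0$) gives exactly the conditions of Definition \ref{defmia}, the fibre over $1_G$ being $S$ itself.

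For the second bullet there is a genuine gap, and it sits exactly where you flag it. The paper does not attempt a direct argument at all: it cites Colliot-Th\'el\`ene's Theorem 5.6 of \cite{MR2404747}, which is precisely the statement that an $S$-torsor over $G$ with $\mathbb G_m$-quasi-trivial total space and trivial fibre over $1_G$ underlies a central extension. Your rigidity bootstrap does produce a lift $m_H:H\times_k H\to H$ of $m_G$ with $m_H(e_H,e_H)=e_H$ (after checking, which you should, that $H\times_k H$ is again $\mathbb G_m$-quasi-trivial --- this needs $\overline H$ to be $\overline k$-rational so that Lemma \ref{san} applies, which does hold since $\overline H\to\overline G$ is Zariski-locally trivial). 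But Proposition \ref{concentrato} rigidifies \emph{torsors}, not morphisms: the associativity defect is a morphism $H^3\to S$, which by Rosenlicht decomposes as $c_1(h_1)c_2(h_2)c_3(h_3)$ up to a constant, and there is no formal reason this can be absorbed into a redefinition of $m_H$; the same problem already appears for the unit axiom, since nothing forces $m_H(h,e_H)=h$. Showing that these defects can be killed is equivalent to showing that the class of $H$ in $\mathrm H^1(G,S)$ lies in the image of the group of central extensions of $G$ by $S$, and that is the actual content of the cited theorem; its proof is not a rigidity argument but a d\'evissage along the structure of $G$ (unipotent radical, reductive quotient, simply connected cover, toric part) together with computations of $\pic$ and of extension groups in each case. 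So your proposal correctly reduces the problem to the right statement but does not prove it, and the reduction itself is the easy half.
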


\begin{proof}
The first point is obvious. The second point is a theorem due to Colliot-Th\'{e}l\`{e}ne (see \cite[Theorem 5.6]{MR2404747}).
\end{proof}

\begin{oss}
There is a similar result for coflasque quasi-resolutions as well.
\end{oss}



Flasque (and coflasque) quasi-resolutions of a $\mathbb G_m$-quasi-trivial algebraic $k$-variety are very rigid. The following propositions, drawing an analogy to the case of tori, provide some kind of classification for them:

\begin{prop}
\label{flasquasitrivia}
Let $X$ be a $\mathbb G_m$-quasi-trivial algebraic $k$-variety with $X(k)\neq \emptyset$. Fix a point $x\in X(k)$. 
\begin{itemize}
    \item Let $P$ be a quasi-trivial torus. The $P$-torsor $X\times_k P\longrightarrow X$ is a flasque quasi-resolution of $(X,x)$.
    \item All flasque quasi-resolutions of $(X,x)$ are of the kind $X\times_k S\longrightarrow X$, where $\widehat S$ is a stably permutation lattice. 
\end{itemize}
\end{prop}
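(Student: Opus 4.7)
The plan is to verify the two bullets separately, reducing both to (a) the rigidity Proposition~\ref{concentrato}, (b) the Rosenlicht--Sansuc splitting of $\U$ and $\pic$ on products, and (c) direct checks against Definition~\ref{defmia}.

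For the first bullet, I would run through the conditions of Definition~\ref{defmia} in turn. Since quasi-trivial tori are in particular flasque (their character lattice is permutation, hence flasque), $X\times_k P\to X$ is a torsor under a flasque torus. Its fibre over $x$ is $\{x\}\times_k P\simeq P$, which admits the identity as a $k$-rational point, so the torsor over $x$ is trivial. It remains to show that $Y:=X\times_k P$ is $\mathbb G_m$-quasi-trivial. On the unit side, the Rosenlicht-type splitting gives $\U(Y)=\U(X)\oplus\widehat P$, a direct sum of two permutation $\Gamma_k$-lattices and hence a permutation lattice. On the Picard side, $\pic(\overline Y)=\pic(\overline X)\oplus\pic(\overline P)=0$, using that $\pic(\overline X)=0$ by hypothesis and $\pic(\overline P)=0$ since $\overline P\simeq\mathbb G_{m,\overline k}^n$. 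Both splittings follow from Sansuc's exact sequence as recalled in the appendix.

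For the second bullet, let $Y\overset{S}{\longrightarrow}X$ be a flasque quasi-resolution of $(X,x)$. Because $X$ is $\mathbb G_m$-quasi-trivial and $S$ is flasque, Proposition~\ref{concentrato} forces the $S$-torsor $Y\to X$ to be constant, i.e., there exists an $S$-torsor $Y_0$ over $\spec k$ with $Y\simeq X\times_k Y_0$ as torsors over $X$. Pulling back to $x$ identifies $Y_0$ with the fibre over $x$, which admits a $k$-rational point by Definition~\ref{defmia}; hence $Y_0$ is the trivial $S$-torsor and $Y\simeq X\times_k S$. To see that $\widehat S$ is stably permutation, I would apply the same product formula for units to obtain $\U(Y)=\U(X)\oplus\widehat S$. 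Since $Y$ is $\mathbb G_m$-quasi-trivial, $\U(Y)$ is permutation, and $\U(X)$ is permutation by hypothesis, so the relation $\widehat S\oplus\U(X)\simeq\U(Y)$ directly witnesses $\widehat S$ as stably permutation.

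The main obstacle is concentrated in the invocation of Proposition~\ref{concentrato}: it is precisely this rigidity that converts a $\mathbb G_m$-quasi-trivial base into a setting where flasque-torus-torsors are pulled back from $\spec k$, after which the marked-point condition collapses the torsor to the trivial one. The only other mild subtlety is ensuring that the product formulas for $\U$ and $\pic$ apply, which they do because both factors are geometrically integral and the torus factor becomes a split torus after base change to $\overline k$.
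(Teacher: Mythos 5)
Your proposal is correct and follows essentially the same route as the paper: the first bullet is verified via the splittings $\U(X\times_k P)\simeq \U(X)\oplus\widehat P$ and $\pic(\overline{X\times_k P})\simeq\pic(\overline X)\oplus\pic(\overline P)$ (the paper cites Lemma~\ref{rosi} and Lemma~\ref{san} for these, rather than the Sansuc exact sequence, but the content is the same), and the second bullet uses Proposition~\ref{concentrato} plus the rational point in the fibre over $x$ to trivialize the constant torsor, after which $\U(X)\oplus\widehat S\simeq\U(Y)$ exhibits $\widehat S$ as stably permutation. No gaps.
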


\begin{proof}
The first item is a computation. Specifically, consider $X\times_k P \longrightarrow X$ as described in the statement. By Lemma \ref{rosi}, we establish that $\U(X\times_k P)\simeq \U(X)\oplus \widehat P$, and applying Lemma \ref{san}, we find that $\pico {X\times_kP}\simeq \pico X \oplus \pico P=0$. This implies that $X\times_k P$ is a $\mathbb G_m$-quasi-trivial algebraic $k$-variety. The remaining properties are clear, therefore the $P$-torsor is a flasque quasi-resolution of $(X,x)$. Now, let $Y\overset{S}{\longrightarrow}X$ be a flasque quasi-resolution of $(X,x)$. According to Proposition \ref{concentrato}, the torsor is constant, thus it is isomorphic to $X\times_k S\longrightarrow X$ since $Y(k)$ is non-empty. By Lemma \ref{rosi}, we deduce that $\U(X\times_k S)\simeq \U(X)\oplus \widehat S$ and thus $\widehat S$ is a stably permutation lattice.
\end{proof}

The same result holds for coflasque quasi-resolutions, and the proof is similar.

\begin{prop}
\label{flasquasitrivia2}
Let $X$ be a $\mathbb G_m$-coflasque algebraic $k$-variety with $X(k)$ non-empty. Let $P$ be a quasi-trivial torus. The $P$-torsor $X\times_k P\longrightarrow X$ is a coflasque quasi-resolution of $X$. Furthermore, all coflasque quasi-resolutions of $X$ fall into this category.
\end{prop}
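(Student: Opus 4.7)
The proof will mirror the one given for Proposition \ref{flasquasitrivia}, with the roles of flasque and quasi-trivial swapped and with the added convenience that coflasque quasi-resolutions require no marked point.

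For the first assertion, I would start with the projection $X \times_k P \longrightarrow X$, which is a $P$-torsor over $X$. By Lemma \ref{rosi}, I get an isomorphism of $\Gamma_k$-lattices $\U(X \times_k P) \simeq \U(X) \oplus \widehat{P}$. Now $\U(X)$ is coflasque by hypothesis on $X$, and $\widehat{P}$ is a permutation lattice, hence in particular coflasque. Since a direct sum of coflasque lattices is coflasque, $\U(X \times_k P)$ is coflasque. Applying Lemma \ref{san}, I obtain $\pico{X \times_k P} \simeq \pico{X} \oplus \pico{P} = 0$ (the first summand vanishes because $X$ is $\mathbb G_m$-coflasque, the second because $P$ is a torus). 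This shows that $X \times_k P$ is $\mathbb G_m$-coflasque, so the torsor $X \times_k P \longrightarrow X$ is a coflasque quasi-resolution of $X$.

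For the second assertion, let $Y \overset{P}{\longrightarrow} X$ be any coflasque quasi-resolution of $X$. By definition, $P$ is a quasi-trivial torus, and $X$ itself is assumed to be $\mathbb G_m$-coflasque. The rigidity statement in Proposition \ref{concentrato} (second bullet) then applies directly: every $P$-torsor over $X$ is trivial. Hence there is an isomorphism of $P$-torsors over $X$ between $Y$ and the trivial torsor $X \times_k P \longrightarrow X$, and in particular an isomorphism of $k$-schemes $Y \simeq X \times_k P$.

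The argument involves essentially no obstacle: everything is reduced to the two lemmas on $\U$ and $\pic$ of products together with the triviality part of Proposition \ref{concentrato}. The only substantive difference from the flasque case is that we do not need to invoke a $k$-rational point of $Y$ or a marked point of $X$ to trivialize the torsor, because in the coflasque setting quasi-trivial torsors over a $\mathbb G_m$-coflasque variety are already trivial rather than merely constant.
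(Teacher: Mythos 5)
Your proof is correct and follows exactly the route the paper intends: the paper gives no separate argument for this proposition, stating only that the proof is similar to that of Proposition \ref{flasquasitrivia}, and your adaptation via Lemma \ref{rosi}, Lemma \ref{san} and the second bullet of Proposition \ref{concentrato} is precisely that adaptation. Your closing observation that triviality (rather than mere constancy) of the torsor removes the need for a marked point is also the correct reason the coflasque statement has no base point.
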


The following proposition establishes a form of `versal' property for flasque quasi-resolutions, reminiscent of a well-known analogous statement for tori. 

\begin{prop}
\label{versal}
Let $X$ be an algebraic $k$-variety with $X(k)\neq \emptyset$. Fix a point $x \in X(k)$. Let $Y\overset{S}{\longrightarrow} X$ be a flasque quasi-resolution of $(X,x)$. Any $k$-morphism $Z\rightarrow X$ that verifies the following conditions:
\begin{itemize}
    \item the algebraic $k$-variety $Z$ is $\mathbb G_m$-quasi-trivial,
    \item the fibre over $x$ contains a point $z\in Z(k)$,
\end{itemize}
factors as follows:
\begin{equation*}
        \begin{tikzpicture}[baseline= (a).base]
        \node[scale=1] (a) at (0,0){
        \begin{tikzcd}
        & Y \arrow[d, "S"]\\
        Z \arrow[r]\arrow[ur, "\psi"]& X.
       \end{tikzcd}};
        \end{tikzpicture}
\end{equation*}
Furthermore, if we choose a point $y \in Y(k)$ in the fibre over $x$, then we can take $\psi$ such that $\psi (z)=y$.
\end{prop}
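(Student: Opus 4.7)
The plan is to pull back the torsor $Y \to X$ along the given morphism $Z \to X$, trivialize the resulting $S$-torsor over $Z$ by invoking the rigidity statement of Proposition \ref{concentrato}, and then choose the trivializing section so that it sends $z$ to the prescribed point $y$.

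More precisely, I would form the fibre product $W := Z \times_X Y$, whose first projection $W \to Z$ is the pullback of the $S$-torsor $Y \to X$ and hence is itself an $S$-torsor over $Z$. Since $Z$ is $\mathbb G_m$-quasi-trivial and $S$ is flasque, Proposition \ref{concentrato} applies and tells us that this torsor is constant, i.e.\ it is the pullback along $Z \to \text{Spec}(k)$ of some $S$-torsor $W_0$ over $k$. The fibre of $W \to Z$ over $z$ is canonically isomorphic both to $W_0$ and, by the construction of the fibre product, to the fibre of $Y \to X$ over $x$; the latter carries the $k$-point $y$ by the definition of a flasque quasi-resolution. Consequently $W_0$ has a $k$-point and is therefore trivial, so we obtain an isomorphism of $S$-torsors $W \simeq Z \times_k S$ over $Z$.

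I would then twist the chosen trivialization so that the distinguished point $(z,y) \in W(k)$ corresponds to $(z,e)$, where $e \in S(k)$ is the identity: if in the original trivialization $(z,y)$ corresponds to $(z,s_0)$, composing with right translation by $s_0^{-1}$ yields a new $S$-equivariant trivialization with the desired normalization. The zero section $s \colon Z \to Z \times_k S$, $z' \mapsto (z',e)$, then satisfies $s(z) = (z,y)$, and I set $\psi := \mathrm{pr}_Y \circ s \colon Z \to Y$. By construction $\psi(z) = y$, and the triangle commutes because the two composites $W \to Y \to X$ and $W \to Z \to X$ agree by the universal property of the fibre product. There is no serious obstacle here; the substantive content is entirely in the rigidity statement of Proposition \ref{concentrato}, and the only delicate point is the bookkeeping around base points needed to guarantee $\psi(z)=y$.
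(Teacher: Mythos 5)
Your proposal is correct and follows essentially the same route as the paper's proof: pull back the torsor along $Z\rightarrow X$, use Proposition \ref{concentrato} to see the resulting $S$-torsor over $Z$ is constant, observe it is trivial because its fibre over $z$ is the fibre of $Y\rightarrow X$ over $x$ and hence has a $k$-point, and adjust the section by a translation in $S(k)$ to force $\psi(z)=y$. Your write-up is simply a more detailed version of the same argument, with the base-point bookkeeping made explicit.
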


\begin{proof}
We consider the fibre product:
\begin{equation*}
        \begin{tikzpicture}[baseline= (a).base]
        \node[scale=1] (a) at (0,0){
        \begin{tikzcd}
        Y_Z \arrow[r] \arrow[d, "S"]& Y \arrow[d, "S"]\\
        Z \arrow[r] & X.
       \end{tikzcd}};
        \end{tikzpicture}
\end{equation*}
The fibre over the point $z$ is trivial. According to Proposition \ref{concentrato}, the $S$-torsor $Y_Z\longrightarrow Z$ is constant and, it is the trivial torsor since $Y_Z$ contains a $k$-point. Thus, there exists a map $Z\rightarrow Y$ that makes the diagram commute. By modifying the section $Z\rightarrow Y_Z$, we can ensure that $\psi (z)=y$. 
\end{proof}

Using a similar proof technique, we can establish the following statement. 

\begin{prop}
\label{versal2}
Let $X$ be an algebraic $k$-variety with $X(k)\neq \emptyset$. Let $Y\overset{P}{\longrightarrow} X$ be a coflasque quasi-resolution of $X$. Let $Z$ be a $\mathbb G_m$-coflasque algebraic $k$-variety. Any $k$-morphism $Z\rightarrow X$ factors as follows: 
\begin{equation*}
        \begin{tikzpicture}[baseline= (a).base]
        \node[scale=1] (a) at (0,0){
        \begin{tikzcd}
        & Y \arrow[d, "P"]\\
        Z \arrow[r]\arrow[ur, "\psi"]& X.
       \end{tikzcd}};
        \end{tikzpicture}
\end{equation*}
Furthermore, if we choose a point $y \in Y(k)$ in the fibre of $x$, then we can take $\psi$ such that $\psi (z)=y$.
\end{prop}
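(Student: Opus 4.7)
The plan is to mirror the argument given for Proposition \ref{versal}, but invoking the second bullet of Proposition \ref{concentrato} instead of the first. Given a $k$-morphism $Z\to X$, I would first form the fibre product
\begin{equation*}
        \begin{tikzpicture}[baseline= (a).base]
        \node[scale=1] (a) at (0,0){
        \begin{tikzcd}
        Y_Z \arrow[r] \arrow[d, "P"]& Y \arrow[d, "P"]\\
        Z \arrow[r] & X,
       \end{tikzcd}};
        \end{tikzpicture}
\end{equation*}
so that $Y_Z\longrightarrow Z$ inherits the structure of a $P$-torsor from $Y\longrightarrow X$ via base change.

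Next, the hypotheses on $Z$ and $P$ fit exactly the setting of the second item of Proposition \ref{concentrato}: $Z$ is a $\mathbb G_m$-coflasque algebraic $k$-variety and $P$ is a quasi-trivial torus, so every $P$-torsor over $Z$ is trivial. In particular, $Y_Z\longrightarrow Z$ admits a section $s\colon Z\rightarrow Y_Z$. Composing $s$ with the projection $Y_Z\rightarrow Y$ yields a morphism $\psi\colon Z\rightarrow Y$ whose composition with $Y\longrightarrow X$ is the given map $Z\rightarrow X$, by the universal property of the fibre product.

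For the final refinement, suppose $z\in Z(k)$ is given and $y\in Y(k)$ lies over $x$ (the image of $z$ in $X$). The pair $(z,y)$ defines a $k$-point of $Y_Z$ in the fibre over $z$. The set of sections of the trivial $P$-torsor $Y_Z\longrightarrow Z$ is a torsor under $P(Z)$, and in particular $P$ acts simply transitively on the fibre $(Y_Z)_z \simeq P$. Thus, after translating the section $s$ by an appropriate element of $P(k)$, we may arrange $s(z)=(z,y)$ and hence $\psi(z)=y$.

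I do not expect a serious obstacle here: the real content of the proposition is packaged in Proposition \ref{concentrato}, and everything else is formal manipulation of torsors and fibre products, entirely parallel to the proof of Proposition \ref{versal}. The only point that requires a little care is the last sentence of the statement, where the marked point $x$ is not explicitly introduced in the proposition; this should be read as the image in $X$ of a chosen $k$-point $z\in Z(k)$, and the adjustment of $\psi$ so that $\psi(z)=y$ is handled by the free $P$-action on fibres just as above.
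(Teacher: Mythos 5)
Your proposal is correct and is exactly the argument the paper intends: the paper gives no explicit proof for this proposition, stating only that it follows "using a similar proof technique" to Proposition \ref{versal}, and your adaptation (base change, the second item of Proposition \ref{concentrato} to trivialize the $P$-torsor over the $\mathbb G_m$-coflasque $Z$, then translating the section by $P$) is that technique. Your reading of the dangling reference to $x$ in the last sentence as the image of the chosen $z\in Z(k)$ is also the right interpretation.
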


\begin{oss}
Under the assumption that the algebraic varieties have structure of linear algebraic groups, and the morphisms preserve the group structure, with $z$ and $y$ being the group identities, we can demonstrate that $\psi$ also preserves the group structure. The proof follows the same idea to as one found in \cite[Proposition 3.2]{MR2404747}.
\end{oss}

It is natural to explore conditions that determine when the pull-back of a flasque quasi-resolution remains a flasque quasi-resolution. The following proposition gives a partial result in that direction. 

\begin{prop}
\label{colliotprati}
Let $X$ be a smooth algebraic $k$-variety with $X(k)\neq \emptyset$. Fix a point $x \in X(k)$. Let $Y\overset{S}{\longrightarrow} X$ be a flasque quasi-resolution of $(X,x)$. Let $f:X'\rightarrow X$ be a morphism of $k$-schemes such that the fibre over $x$ has a $k$-rational point $x'$. Assume that $f$ satisfies one of the following properties:
\begin{itemize}
    \item $f$ is an open embedding.
    \item $f$ is a $G$-torsor, where $G$ is a $\mathbb G_m$-quasi-trivial linear algebraic $k$-group.
\end{itemize}
Then the base change $Y'\overset{S}{\longrightarrow} X'$ is a flasque quasi-resolution of $(X',x')$.
\end{prop}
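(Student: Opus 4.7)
The plan is to verify each of the requirements of Definition \ref{defmia} for the base-changed torsor $Y':=Y\times_X X'\to X'$ with marked point $x'$. Two of them are immediate: since $Y\to X$ is an $S$-torsor, so is its pullback $Y'\to X'$; and the fibre of $Y'\to X'$ over $x'$ is canonically isomorphic, by base change, to the fibre of $Y\to X$ over $x=f(x')$, which by hypothesis contains a $k$-rational point. The scheme $Y'$ is an algebraic $k$-variety in both cases. The real content is therefore to show that $Y'$ is $\mathbb G_m$-quasi-trivial, i.e.\ that $\pico{Y'}=0$ and that $\U(Y')$ is a permutation $\Gamma_k$-lattice.

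I would treat the two cases uniformly via a Sansuc-style exact sequence living on the total space $Y$. In the open embedding case, $Y'\hookrightarrow Y$ is itself an open embedding, whose closed complement $\overline Z\subset\overline Y$ is the preimage of $\overline X\setminus\overline{X'}$; in particular $\overline Z$ is $\Gamma_k$-stable and its irreducible components are permuted by $\Gamma_k$. Since $\overline Y$ is smooth, the standard divisor exact sequence reads
\begin{equation*}
0\to \U(Y)\to \U(Y')\to \text{Div}_{\overline Z}\overline Y\to \pico{Y}\to \pico{Y'}\to 0.
\end{equation*}
In the $G$-torsor case, $Y'\to Y$ is itself a $G$-torsor and Sansuc's exact sequence gives
\begin{equation*}
0\to \U(Y)\to \U(Y')\to \widehat G\to \pico{Y}\to \pico{Y'}\to \pico{G}.
\end{equation*}
In both cases, the vanishings $\pico{Y}=0$ (and $\pico{G}=0$ in the second one) immediately yield $\pico{Y'}=0$ together with a short exact sequence
\begin{equation*}
0\to \U(Y)\to \U(Y')\to Q\to 0,
\end{equation*}
in which $Q$ is either the permutation $\Gamma_k$-lattice $\text{Div}_{\overline Z}\overline Y$ or the permutation $\Gamma_k$-lattice $\widehat G$.

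The main step is to show that this extension splits, thereby forcing $\U(Y')$ to be a permutation lattice. Since $\U(Y)$ is permutation it is in particular coflasque, so $\hl1(\Gamma',\U(Y))=0$ for every open subgroup $\Gamma'$ of $\Gamma_k$. Writing $Q$ as a direct sum of modules of the form $\mathbb Z[\Gamma_k/\Gamma']$ and applying Shapiro's lemma to each summand yields $\text{Ext}^1_{\Gamma_k}(Q,\U(Y))=0$; hence the sequence splits and $\U(Y')\simeq \U(Y)\oplus Q$ is a permutation lattice. This finishes the proof that $Y'$ is $\mathbb G_m$-quasi-trivial.

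The hard part is precisely the splitting argument above, since an arbitrary extension of a permutation lattice by a permutation lattice need not itself be a permutation lattice. What saves us is the redundancy built into the notion of $\mathbb G_m$-quasi-triviality: a permutation lattice is automatically coflasque, which forces the pertinent Ext group to vanish.
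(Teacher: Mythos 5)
Your proof is correct and follows essentially the same strategy as the paper's: reduce everything to showing that $Y'$ is $\mathbb G_m$-quasi-trivial, and read this off an exact sequence attached to the morphism $Y'\rightarrow Y$ (the divisorial localization sequence in the open-embedding case, Sansuc's sequence in the torsor case), using that $\mathrm{Pic}(\overline{Y})$ vanishes. The only difference is one of packaging: the paper disposes of the open-embedding case by citing \cite[Proposition 1.2]{MR2404747}, whose content is precisely your localization-sequence argument, and it leaves the splitting $\mathrm{U}(Y')\simeq \mathrm{U}(Y)\oplus \widehat G$ implicit, whereas you justify it explicitly via the vanishing of $\mathrm{Ext}^1$ of a permutation lattice against a coflasque one (this is \cite[Lemma 1]{requiv}).
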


\begin{proof}
We prove that $Y'$ is $\mathbb G_m$-quasi-trivial. 
\begin{itemize}
    \item If $f$ is an open embedding, then $Y'$ is an open subset of $Y$ and we can apply \cite[Proposition 1.2]{MR2404747}.
    \item If $f$ is a $G$-torsor, we apply the Sansuc exact sequence to the $G$-torsor $Y'\longrightarrow Y$:
    \begin{equation*}
        \begin{tikzpicture}[baseline= (a).base]
        \node[scale=1] (a) at (0,0){
        \begin{tikzcd}
        0\arrow[r]&\U(Y)\arrow[r]&\U(Y')\arrow[r]&\widehat{G}\arrow[r]& \text{Pic}(\overline{Y})\arrow[r]& \text{Pic}(\overline{Y'})\arrow[r]& \text{Pic}(\overline{G}).
       \end{tikzcd}};
        \end{tikzpicture}
    \end{equation*}
    By hypothesis, $\U(Y)$ and $\widehat G$ are permutation modules, and $\pico Y$ and $\pico G$ are trivial. Consequently, we can deduce that that $\pico {Y'}$ is trivial, and $\U(Y')\simeq \U(Y)\oplus \widehat G$ is a permutation module.
\end{itemize}
To conclude, we observe that, by assumption, there exists a rational point in the fibre above $x'$ of $Y'\longrightarrow X'$. Therefore, such an $S$-torsor is a flasque quasi-resolution.
\end{proof}

\begin{oss}
There is a similar result for coflasque quasi-resolutions as well.
\end{oss}



Under appropriate assumptions flasque quasi-resolutions behave well with respect to the product.

\begin{prop}
\label{prodotto}
Let $X$ and $X'$ be smooth algebraic $k$-varieties, both of which have a $k$-rational point. Choose two points $x\in X(k)$ and $x'\in X'(k)$. Consider two flasque quasi-resolutions $Y\overset{S}{\longrightarrow} X$ and $Y'\overset{S'}{\longrightarrow} X'$ of $(X,x)$ and $(X',x')$, respectively. Assuming that $X$ is $\overline{k}$-rational, the map $Y\times_k Y'\longrightarrow X\times_k X'$ is a flasque quasi-resolution of $(X\times_k X', (x,x'))$.
\end{prop}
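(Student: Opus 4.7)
The plan is to check the three defining conditions in Definition \ref{defmia} for the morphism $Y\times_k Y' \longrightarrow X\times_k X'$: that it is a torsor under a flasque torus, that the total space is $\mathbb G_m$-quasi-trivial, and that the fibre over the marked point admits a $k$-rational point. The torsor structure and the marked point are essentially formal: the fibre product of an $S$-torsor and an $S'$-torsor is canonically an $(S\times_k S')$-torsor; the torus $S\times_k S'$ is flasque because the direct sum of two flasque lattices is flasque; and if $y$ (respectively $y'$) is a $k$-rational point in the fibre over $x$ (respectively $x'$), then $(y,y')$ is a $k$-rational point in the fibre of $Y\times_k Y'$ over $(x,x')$.

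The substantive step is the $\mathbb G_m$-quasi-triviality of $Y\times_k Y'$. My first move is to transfer the rationality hypothesis from $\overline X$ to $\overline Y$: the split torus $\overline S\simeq \mathbb G_{m,\overline k}^n$ identifies $\overline Y\longrightarrow \overline X$ with the total space of $n$ line bundles (minus their zero sections) over $\overline X$, each of which becomes trivial on a suitable dense open $U\subset \overline X$. On $U$ the torsor is therefore trivial, so $\overline Y$ is birational to $U\times_{\overline k}\overline S$, which is rational because $U$ is (as an open subset of the rational $\overline X$) and $\overline S$ is. Hence $\overline Y$ is $\overline k$-rational.

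Having a rational factor at my disposal, I would then invoke the K\"unneth-type formulas for units and Picard groups of a product of smooth geometrically integral $k$-varieties, one of which is $\overline k$-rational (the suitable generalization of Lemma \ref{rosi} and Lemma \ref{san}, compare \cite{requiv}, \cite{MR631309}), to obtain $\Gamma_k$-equivariant isomorphisms
\begin{equation*}
    \U(Y\times_k Y')\simeq \U(Y)\oplus \U(Y'),\qquad \pico{Y\times_k Y'}\simeq \pico Y \oplus \pico{Y'}=0.
\end{equation*}
The first isomorphism displays $\U(Y\times_k Y')$ as a direct sum of two permutation lattices, hence as a permutation lattice; the second gives vanishing of the Picard group. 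This confirms that $Y\times_k Y'$ is $\mathbb G_m$-quasi-trivial, and the three conditions of the definition are verified.

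The main obstacle is the K\"unneth splitting for the Picard group: for a product of arbitrary smooth varieties cross-terms of Poincar\'e-bundle type can appear in $\pic$, and the rationality assumption on one factor is precisely what kills the contribution of the Albanese pairing. Everything else in the argument is a direct unpacking of the definitions.
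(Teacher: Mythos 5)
Your proposal is correct and follows essentially the same route as the paper: the paper's proof consists precisely of invoking Lemma \ref{rosi} and Lemma \ref{san} to split $\U(Y\times_k Y')$ and $\text{Pic}(\overline{Y\times_k Y'})$, plus the observation that the torsor is trivial over the marked point. The one place you go beyond the paper is in spelling out why $\overline{Y}$ is $\overline{k}$-rational (Zariski-local triviality of torsors under split tori over the rational $\overline{X}$), a step the paper needs in order to apply Lemma \ref{san} but leaves implicit, so this is a welcome addition rather than a divergence.
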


\begin{proof}
By Lemma \ref{rosi} and Lemma \ref{san}, the module $\U(Y\times_k Y')\simeq \U(Y)\oplus \U(Y')$ is a permutation module, and $\pico {Y\times_k Y'}\simeq\pico Y \oplus \pico {Y'}=0$. Due to the construction, the $S\times_k S'$-torsor $Y\times_k Y'\longrightarrow X\times_k X'$ is trivial at the point $x$. Therefore, we can establish the claim.
\end{proof}

\begin{oss}
There is a similar result for coflasque quasi-resolutions as well.
\end{oss}



We conclude this section by examining a (uni)rationality result. 

\begin{prop}
\label{sistemaresult}
Let $X$ be an algebraic $k$-variety with $X(k)\neq \emptyset$. Consider a morphism $\phi:U\rightarrow X$, where $U$ is an open subset of $\mathbb P^n_k$. Let $x$ be a point in $\phi\left(U(k)\right)$. Let $Y\overset{S}{\longrightarrow}X$ be a flasque quasi-resolution of $(X,x)$.
\begin{itemize}
    \item If $\phi$ has dense image, then $Y$ is $k$-unirational.
    \item If $X$ is smooth, and $\phi$ restricts to a $k$-isomorphism in a neighbourhood of $x$, then $Y$ is $k$-stably rational, and $\widehat S$ is stably permutation. 
\end{itemize}
\end{prop}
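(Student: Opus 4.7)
The plan is to produce a small open $V\subseteq U$ containing a $k$-preimage of $x$ that is both $k$-rational and $\mathbb G_m$-quasi-trivial, so that the pull-back of $Y$ along $V\to X$ trivialises and allows a direct comparison of $Y$ with $V\times_k S$. Concretely, I fix $u\in U(k)$ with $\phi(u)=x$, choose an affine chart $\mathbb A^n_k\subseteq\mathbb P^n_k$ containing $u$, and pick a polynomial $f\in k[x_1,\dots,x_n]$ vanishing on the closed complement $\mathbb A^n_k\setminus(U\cap\mathbb A^n_k)$ but not at $u$ (such $f$ exists because $u$ does not lie in that complement). Setting $V:=D(f)\subseteq \mathbb A^n_k$, one has $V\subseteq U$, $u\in V(k)$, and $V$ is $k$-rational; moreover $\overline k[V]$ is a localisation of a unique factorisation domain, so $\pico V=0$, and $\U(V)$ is the free abelian group on the $\overline k$-irreducible factors of $f$, a permutation $\Gamma_k$-lattice, so $V$ is $\mathbb G_m$-quasi-trivial. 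The pull-back $V\times_X Y\to V$ is then an $S$-torsor over a $\mathbb G_m$-quasi-trivial variety under a flasque torus, hence constant by Proposition~\ref{concentrato}, and trivial because any $y\in Y(k)$ above $x$ yields a section above $u$; we obtain an isomorphism $V\times_X Y\simeq V\times_k S$ and a morphism $\alpha\colon V\times_k S\to Y$.

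For the first bullet, $\alpha$ composed with $Y\to X$ equals $\phi|_V$, which is dominant because $\phi$ has dense image and $V$ is dense in $U$; since $S$ acts transitively on the fibres of $Y\to X$, $\alpha$ itself is dominant. As $V$ is $k$-rational and every $k$-torus is $k$-unirational (by a standard Weil restriction argument), the product $V\times_k S$ is $k$-unirational, and hence so is $Y$.

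For the second bullet, I further require $V$ to be contained in the open on which $\phi$ is an isomorphism. Then $\phi|_V\colon V\to X$ is an open embedding, so Proposition~\ref{colliotprati} ensures that $V\times_X Y\to V$ is itself a flasque quasi-resolution; together with the trivialisation $V\times_X Y\simeq V\times_k S$ and the $\mathbb G_m$-quasi-triviality of $V$, Proposition~\ref{flasquasitrivia} forces $\widehat S$ to be stably permutation. Consequently $S$ is $k$-stably rational (since quasi-trivial tori are $k$-rational), whence $V\times_k S$ is $k$-stably rational; because $V\times_X Y$ is simultaneously a dense open of $Y$ (the base change of an open embedding along $p\colon Y\to X$), we conclude that $Y$ is $k$-stably rational. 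The only delicate point of the strategy is the explicit construction of $V$ as a principal open of affine space sitting inside $U$ through the chosen $k$-point; once $V$ is in place, every remaining step is a direct application of the structural results (Propositions~\ref{concentrato}, \ref{flasquasitrivia}, and~\ref{colliotprati}) proved above.
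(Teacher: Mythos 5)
Your proof is correct and, at bottom, follows the same strategy as the paper: trivialise the pull-back of the torsor over a rational open subset mapping to $X$, then read off unirationality, stable rationality and the structure of $\widehat S$ from the product with $S$. The genuine difference is in how the trivialisation is obtained. The paper quotes \cite[Corollary 2.6]{MR878473} to assert that the $S$-torsor $Y_U\longrightarrow U$ is constant for $U$ open in $\mathbb P^n_k$; you instead shrink to a principal open $V=D(f)$ of an affine chart through a $k$-preimage of $x$, check by hand that $\pico V=0$ and that $\U(V)$ is the permutation lattice on the $\overline k$-irreducible factors of $f$, and then apply Proposition~\ref{concentrato}. This makes the argument self-contained within the paper, and it buys you something concrete in the second bullet: since $\U(V)$ is honestly a permutation lattice, the isomorphism $\U(V\times_k S)\simeq \U(V)\oplus\widehat S$ (equivalently, Proposition~\ref{flasquasitrivia} applied to the flasque quasi-resolution of $(V,u)$ furnished by Proposition~\ref{colliotprati}) immediately gives that $\widehat S$ is stably permutation in the sense of the paper's definition, whereas for a general open $U\subseteq\mathbb P^n_k$ the lattice $\U(U)$ is only the kernel of a degree map on a permutation lattice and need not itself be permutation, so the paper's one-line deduction tacitly requires the same further shrinking that you carry out explicitly. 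The remaining steps --- dominance of $V\times_k S\rightarrow Y$ from the density of $\phi(V)$ and the openness of the torsor projection, $k$-unirationality of $V\times_k S$, and stable $k$-rationality of $Y$ from its dense open subset $V\times_X Y\simeq V\times_k S$ --- are all sound and match the paper's conclusions.
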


\begin{proof}
We consider the fibre product:
\begin{equation*}
        \begin{tikzpicture}[baseline= (a).base]
        \node[scale=1] (a) at (0,0){
        \begin{tikzcd}
       Y_U\arrow[r]\arrow[d,"S"]& Y \arrow[d, "S"]\\
       U\arrow[r, "\phi"] & X.
       \end{tikzcd}};
        \end{tikzpicture}
    \end{equation*}
 By \cite[Corollary 2.6]{MR878473}, the $S$-torsor $Y_U \longrightarrow U$ is constant. Since $x \in \phi(U(k))$, the set $Y_U(k)$ is non-empty, and we conclude that the torsor is trivial.
\begin{itemize}
    \item Assume that $\phi$ has dense image. Then $Y_U\simeq U\times_k S\longrightarrow Y$ has dense image, therefore $Y$ is $k$-unirational
    \item By hypothesis, without loss of generality, we can consider a smaller $U$ such that the map is an open embedding, and $x \in U$. By Proposition \ref{colliotprati}, the $S$-torsor $Y_U\longrightarrow U$ is a flasque quasi-resolution of $(U,x)$. In particular, $\U(Y_U)\simeq \U(U)\oplus \widehat S$ is a permutation lattice, therefore $\widehat S$ is a stably permutation module. Since $Y_U\simeq U\times_k S$ is an open subset of $Y$ we obtain that $Y$ is stably $k$-rational.
\end{itemize}
\end{proof}

\begin{oss}
\label{remarksenzanome}
The second item of the previous proposition has an analogue when $X$ is $k$-stably rational. The proof remains the same, and $\widehat S$ is a stably permutation lattice under this assumption, too.
\end{oss}

\begin{oss}
The previous proposition also yields a similar result concerning coflasque quasi-resolutions. However, in this case, the proof is simplified because the torsor is locally trivial for the Zariski topology.
\end{oss}

\section{Existence and uniqueness of flasque quasi-resolutions}

In this section, we state the main results on flasque quasi-resolutions. First of all, we see a special case, like the one of proper $k$-unirational varieties or homogeneous space under a semi-simple group. Then, we pass to the general case. 

In this section, we assume that $X$ is a smooth algebraic $k$-variety. We assume that $X(k)$ is non-empty and that $\pico X$ is finitely generated. 

\subsection{Special case}
\label{special}

Let $X$ be an algebraic $k$-variety as at the beginning of the section. We assume that $\U(X)$ is trivial. We can state the following result due to Colliot-Th\'{e}l\`{e}ne and Sansuc:

\begin{prop}
\label{sequesa}
Let $M$ be a group of multiplicative type defined over $k$. There exists a functorial exact sequence of abelian groups:
\begin{equation*}
        \begin{tikzpicture}[baseline= (a).base]
        \node[scale=1] (a) at (0,0){
        \begin{tikzcd}
        0\arrow[r]& \emph{H}^1(k,M)\arrow[r]& \emph{H}^1(X,M)\arrow[r, "t"]&\emph{Hom}_{\Gamma_k}(\widehat M, \emph{Pic}(\overline{X}))\arrow[r]& 0,
       \end{tikzcd}};
        \end{tikzpicture}
\end{equation*}
where the first map is induced by the structural morphism and $t$ is the type map. Furthermore, each element of $X(k)$ induces a section of the first map.
\end{prop}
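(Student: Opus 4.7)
The plan is to deduce the statement from the Leray (Hochschild--Serre) spectral sequence for the structure morphism $\pi\colon X\rightarrow \spec k$, namely
\[
E_2^{p,q}=\Hh^p(k,\Hh^q(\overline{X},M))\Longrightarrow \Hh^{p+q}(X,M),
\]
after explicitly identifying its low-degree terms $\Hh^q(\overline X, M)$ for $q\leq 1$.

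The key geometric input is the hypothesis $\U(X)=0$, that is $\overline k[\overline X]^*=\overline k^*$. Using this, together with the fact that $\overline k^*$ is a divisible (hence injective) abelian group, I would first establish, for every $k$-group $M$ of multiplicative type, the $\Gamma_k$-equivariant identifications
\[
\Hh^0(\overline X,M)=M(\overline k),\qquad \Hh^1(\overline X,M)=\Hom(\widehat M,\pic(\overline X)).
\]
Both are functorial in $M$; for $M=\mathbb G_m$ they reduce to $\overline k[\overline X]^*=\overline k^*$ and to the standard identification $\Hh^1(\overline X,\mathbb G_m)=\pic(\overline X)$. The general case follows by choosing a resolution $0\rightarrow \mathbb Z^m\rightarrow \mathbb Z^n\rightarrow \widehat M\rightarrow 0$ of the character module, dualising to get an exact sequence $1\rightarrow M\rightarrow \mathbb G_m^n\rightarrow \mathbb G_m^m\rightarrow 1$, and chasing the associated long exact sequences in \'etale cohomology over $\overline X$.

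Plugging these identifications into the five-term exact sequence of the Leray spectral sequence yields
\[
0\rightarrow \Hh^1(k,M)\rightarrow \Hh^1(X,M)\xrightarrow{t} \Hom_{\Gamma_k}(\widehat M,\pic(\overline X))\rightarrow \Hh^2(k,M)\rightarrow \Hh^2(X,M),
\]
and by the construction of the spectral sequence, $t$ is the type map. To conclude, I would exploit the hypothesis $X(k)\neq \emptyset$: any rational point $x\in X(k)$ is a section of $\pi$, so $x^*\colon \Hh^i(X,M)\rightarrow \Hh^i(k,M)$ is a retraction of $\pi^*$ in every degree $i$. For $i=2$ this makes $\pi^*$ injective, hence the boundary map into $\Hh^2(k,M)$ vanishes and $t$ becomes surjective; for $i=1$ the map $x^*$ provides the announced section of the inclusion $\Hh^1(k,M)\hookrightarrow \Hh^1(X,M)$. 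Functoriality in $M$ is inherited from that of the Leray spectral sequence.

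The main technical obstacle is the $\Gamma_k$-equivariance of the identification $\Hh^1(\overline X,M)\simeq \Hom(\widehat M,\pic(\overline X))$: the resolution of $\widehat M$ used above cannot always be chosen in a Galois-equivariant way when $\widehat M$ has torsion. I would handle this by constructing the map intrinsically at the level of torsors --- sending an $M$-torsor $Y$ on $\overline X$ to the homomorphism $\chi \mapsto [Y\times^M \mathbb G_m]\in \pic(\overline X)$ for $\chi\in \widehat M$ --- and only then invoking the resolution to check that it is bijective.
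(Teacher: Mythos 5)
Your argument is correct, but it is not the route taken by the source the paper cites for this statement (the paper itself gives no proof and refers to Colliot-Th\'el\`ene--Sansuc and to Skorobogatov, Corollary 2.3.9). The cited proof runs the local-to-global spectral sequence for $\mathrm{Ext}$ with $\mathbb G_m$-coefficients, $\mathrm{Ext}^p_k(\widehat M, \Hh^q(\overline X,\mathbb G_m))\Rightarrow \mathrm{Ext}^{p+q}_X(\widehat M,\mathbb G_m)$, using divisibility of $\overline k^*$ to identify $\mathrm{Ext}^p_k(\widehat M,\overline k^*)$ with $\Hh^p(k,M)$; this sidesteps entirely the need to compute $\Hh^1(\overline X,M)$ and hence the torsion/equivariance issue you flag. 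You instead run the Hochschild--Serre spectral sequence with $M$-coefficients and must supply the identifications $\Hh^0(\overline X,M)=M(\overline k)$ and $\Hh^1(\overline X,M)=\mathrm{Hom}(\widehat M,\mathrm{Pic}(\overline X))$ by hand; your computation via a resolution $0\to\mathbb Z^m\to\mathbb Z^n\to\widehat M\to 0$ and divisibility of $\overline k^*$ is sound, and your use of the rational point to kill the transgression into $\Hh^2(k,M)$ and to produce the section is exactly right. Two small remarks: the equivariance obstacle is less serious than you suggest, since one can always choose a $\Gamma_k$-equivariant resolution $0\to N\to P\to\widehat M\to 0$ with $P$ a permutation lattice and $N$ a lattice (dualizing to an extension of $M$ by tori defined over $k$), so the whole computation can be made equivariant without falling back on the intrinsic description; and your intrinsic description of the map as $\chi\mapsto[Y\times^M\mathbb G_m]$ is precisely the type map the paper constructs and proves $\Gamma_k$-equivariant in its appendix, so that fix is also available and consistent with the paper's conventions. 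What your route buys is a self-contained argument with the geometric input ($\mathrm{U}(X)=0$, $X(k)\neq\emptyset$) made completely explicit; what the $\mathrm{Ext}$ route buys is uniformity in $M$ with no case analysis for torsion.
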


\begin{proof}
See \cite[Section 2]{sketch} or \cite[Corollary 2.3.9]{skorobogatov_2001}).
\end{proof}

\begin{oss}
Let $X$ and $M$ be as in Proposition \ref{sequesa}. We fix a point $x\in X(k)$. The non-canonical isomorphism
\begin{equation*}
        \begin{tikzpicture}[baseline= (a).base]
        \node[scale=1] (a) at (0,0){
        \begin{tikzcd}
        \text{H}^1(X,M)\arrow[r, "\sim"]&\text{Hom}_{\Gamma_k}\left(\widehat M, \text{Pic}(\overline{X})\right)\oplus \text{H}^1(k,M)
       \end{tikzcd}};
        \end{tikzpicture}
\end{equation*}
implies that an $M$-torsor over $X$ is uniquely determined, up to isomorphism, by its type and the fibre over $x$.
\end{oss}

We can give the following definition from \cite[Section 2]{sketch}:

\begin{Def}
\label{tipo}
Let $M$ be the group of multiplicative type with $\widehat M=\text{Pic}(\overline X)$. We say that an $M$-torsor over $X$ is universal if its $type$ is the identity map.
\end{Def}

\begin{oss}
Suppose that $X(k)$ is non-empty. Once that we fix a point $x\in X(k)$, the universal torsor is unique, up to unique isomorphism.
\end{oss}

For the remaining part of this subsection, we fix a point $x\in X(k)$ and denote $\pico X$ as $\widehat M$, for some group of multiplicative type $M$. First of all, we prove a lemma:

\begin{lemma}
\label{nonso}
Let $S$ be a flasque torus, and let $Y\longrightarrow X$ be an $S$-torsor that is trivial over $x$. The following statements are equivalent:
\begin{itemize}
    \item The torsor is a flasque quasi-resolution of $(X,x)$.
    \item The type $\widehat S \longrightarrow \emph{Pic}(\overline{ X})$ is surjective and the kernel is a permutation lattice.
\end{itemize}
\end{lemma}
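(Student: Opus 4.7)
The plan is to use the Sansuc exact sequence associated to the torsor $Y\overset{S}{\longrightarrow} X$, which is the main tool already developed in the appendix. Since the base point condition (triviality over $x$) is assumed in the hypothesis of the lemma, the only remaining part of Definition \ref{defmia} to analyse is the $\mathbb G_m$-quasi-triviality of $Y$, i.e.\ that $\text{Pic}(\overline Y)=0$ and $\U(Y)$ is a permutation lattice.

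First I would write down the Sansuc exact sequence for the $S$-torsor $Y\longrightarrow X$:
\begin{equation*}
\begin{tikzcd}
0\arrow[r] & \U(X)\arrow[r] & \U(Y)\arrow[r] & \widehat S\arrow[r, "t"] & \pico X\arrow[r] & \pico Y\arrow[r] & \pico S.
\end{tikzcd}
\end{equation*}
The key observation is that, by construction, the connecting map $\widehat S\rightarrow \pico X$ in this sequence coincides with the type map of the torsor (this is exactly the content of the appendix, where the connecting morphism is made explicit). Using the standing hypothesis $\U(X)=0$ of this subsection, and the fact that $\pico S=0$ since $S$ is a torus, the sequence reduces to the four-term exact sequence
\begin{equation*}
\begin{tikzcd}
0\arrow[r] & \U(Y)\arrow[r] & \widehat S\arrow[r, "t"] & \pico X\arrow[r] & \pico Y\arrow[r] & 0.
\end{tikzcd}
\end{equation*}

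From this it is immediate to read off both directions of the equivalence. On one hand, $\pico Y=0$ if and only if $t$ is surjective; on the other hand, when $t$ is surjective we obtain a canonical identification $\U(Y)\simeq \ker(t)$, so $\U(Y)$ is a permutation lattice if and only if $\ker(t)$ is. Combining the two gives precisely the stated equivalence between $Y$ being $\mathbb G_m$-quasi-trivial and the type map being surjective with permutation kernel. There is no real obstacle here: once the Sansuc sequence is in place and the identification of the connecting morphism with the type map is recalled, the argument is purely a diagram chase.
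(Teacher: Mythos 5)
Your proposal is correct and follows exactly the paper's own argument: apply the Sansuc exact sequence to the $S$-torsor $Y\longrightarrow X$, use $\U(X)=0$ and $\text{Pic}(\overline S)=0$ to reduce to the four-term sequence $0\rightarrow \U(Y)\rightarrow \widehat S\rightarrow \text{Pic}(\overline X)\rightarrow \text{Pic}(\overline Y)\rightarrow 0$, and read off the equivalence. The only difference is that you spell out the identification of the connecting map with the type and the final diagram chase, which the paper leaves implicit in ``The lemma follows.''
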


\begin{proof}
We can apply the exact sequence of Sansuc to the $S$-torsor $Y\longrightarrow X$. Since $\U(X)$ and $\pico S$ are trivial, we obtain 
\begin{equation*}
        \begin{tikzpicture}[baseline= (a).base]
        \node[scale=1] (a) at (0,0){
        \begin{tikzcd}
        0\arrow[r]&\U(Y)\arrow[r]&\widehat{S}\arrow[r]& \text{Pic}(\overline{X})\arrow[r]& \text{Pic}(\overline{Y})\arrow[r]& 0.
       \end{tikzcd}};
        \end{tikzpicture}
\end{equation*}
The lemma follows.
\end{proof}

\begin{oss}
A similar result holds for coflasque quasi-resolutions as well.
\end{oss}

By Proposition \ref{sequesa}, there exists a universal $M$-torsor $Z_0 \longrightarrow X$ which is trivial over $x$. Let 
\begin{equation*}
        \begin{tikzpicture}[baseline= (a).base]
        \node[scale=1] (a) at (0,0){
        \begin{tikzcd}
        1\arrow[r]& M\arrow[r]&S_0\arrow[r]&E_0\arrow[r]&1
       \end{tikzcd}};
        \end{tikzpicture}
\end{equation*}
be a flasque resolution. We denote by $Y_0 \longrightarrow X$ the pushforward of the $M$-torsor $Z_0\longrightarrow X$ by the map $M\rightarrow S_0$. We have just provided a method to construct flasque quasi-resolutions:

\begin{prop}
\label{vera}
The $S_0$-torsor $Y_0\longrightarrow X$ is a flasque quasi-resolution of $(X,x)$.
\end{prop}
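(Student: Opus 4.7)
The plan is to apply Lemma \ref{nonso} to the $S_0$-torsor $Y_0\to X$, which reduces the assertion to checking two conditions: that $Y_0\to X$ is trivial over the base point $x$, and that the associated type map $\widehat{S_0}\to\pico X$ is surjective with permutation kernel.

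Triviality over $x$ follows directly from the construction. Pushforward of torsors along the morphism of $k$-groups $M\to S_0$ commutes with pullback to $\{x\}$, so the fibre $(Y_0)_x$ is the pushforward of the trivial $M$-torsor $(Z_0)_x$, which is the trivial $S_0$-torsor; in particular it contains the identity of $S_0$ as a $k$-rational point.

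For the type, I would invoke the functoriality of the map $t$ in Proposition \ref{sequesa} with respect to change of the coefficient group. Applied to the morphism $M\hookrightarrow S_0$ and the universal torsor $Z_0$, whose type is $\mathrm{id}_{\widehat{M}}$ by Definition \ref{tipo}, this gives that the type of $Y_0$ is precisely the dual morphism $\widehat{S_0}\to\widehat{M}=\pico X$. Dualising the short exact sequence $1\to M\to S_0\to E_0\to 1$ produces the exact sequence of $\Gamma_k$-lattices $0\to \widehat{E_0}\to \widehat{S_0}\to \widehat{M}\to 0$, so this type is surjective, and its kernel $\widehat{E_0}$ is a permutation lattice since $E_0$ is quasi-trivial by the definition of a flasque resolution of $M$. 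Lemma \ref{nonso} then yields the conclusion.

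The only step requiring any care is the functoriality of the type map under pushforward by $M\to S_0$, but this is precisely the functoriality claim built into Proposition \ref{sequesa}, so the argument really amounts to a short verification once the right lemma is invoked.
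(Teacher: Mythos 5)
Your proposal is correct and follows essentially the same route as the paper: both reduce to Lemma \ref{nonso}, identify the type of $Y_0$ as the composition of $\widehat{S_0}\to\widehat M$ with the identity type of the universal torsor via the functoriality in Proposition \ref{sequesa}, and observe that this map is surjective with permutation kernel $\widehat{E_0}$. The only cosmetic difference is that the paper phrases the kernel computation through Sansuc's exact sequence while you read it off directly from the dual of the flasque resolution; the two are equivalent.
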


\begin{proof}
By construction, the universal torsor is trivial over $x$. This implies that the pushforward by $M\rightarrow S_0$ is also trivial over $x$. It is enough to prove that the type $\widehat {S_0}\longrightarrow \text{Pic}(\overline{ X})$ is surjective and the kernel is a permutation lattice, by Lemma \ref{nonso}. By Proposition \ref{sequesa}, there is the following commutative diagram:
\begin{equation*}
        \begin{tikzpicture}[baseline= (a).base]
        \node[scale=1] (a) at (0,0){
        \begin{tikzcd}
        0\arrow[r] & \text{H}^1(k,S_0)\arrow[r] & \text{H}^1(X, S_0)\arrow[r] & \text{Hom}_{\Gamma_k}(\widehat {S_0}, \widehat M)\arrow[r] & 0\\
         0\arrow[r] & \text{H}^1(k,M)\arrow[r]\arrow[u] & \text{H}^1(X, M)\arrow[r]\arrow[u] & \text{Hom}_{\Gamma_k}(\widehat M, \widehat M)\arrow[r]\arrow[u] & 0,
       \end{tikzcd}};
        \end{tikzpicture}
\end{equation*}
where the first two vertical arrows are the pushforward by $M\rightarrow {S_0}$, and the last one is the pull-back by the map $\widehat {S_0}\rightarrow \widehat M$. By diagram chasing, we can see that the type of the $S_0$-torsor $Y_0\longrightarrow X$ is the composition:
\begin{equation*}
        \begin{tikzpicture}[baseline= (a).base]
        \node[scale=1] (a) at (0,0){
        \begin{tikzcd}
        \widehat {S_0} \arrow[r] & \widehat M \arrow[r, "\id"] & \widehat M ,
       \end{tikzcd}};
        \end{tikzpicture}
\end{equation*}
where the first map is the one of the flasque resolution $\widehat {S_0}\rightarrow \widehat M$ and the second map is the type of the universal torsor. By applying the exact sequence of Sansuc, we can deduce that the kernel $\U(Y_0)$ is a permutation lattice, and $\pico{Y_0}$ is zero.
\end{proof}

The previous construction demonstrates how to create flasque quasi-resolutions in this context. Now, we aim to establish that all flasque quasi-resolutions, up to isomorphism, are obtained in this way. 

\begin{prop}
\label{uni}
Let $Z\longrightarrow X$ be the universal $M$-torsor that is trivial over $x$, and let $Y\overset{S}{\longrightarrow}X$ be a flasque quasi-resolution of $(X,x)$. There exists a flasque resolution:
\begin{equation*}
        \begin{tikzpicture}[baseline= (a).base]
        \node[scale=1] (a) at (0,0){
        \begin{tikzcd}
        1\arrow[r] & M\arrow[r] & S\arrow[r] & E\arrow[r] & 1
        \end{tikzcd}};
        \end{tikzpicture}
\end{equation*}
such that the pushforward of the universal torsor by $M\rightarrow S$ is isomorphic to $Y\overset{S}{\longrightarrow}X$.
\end{prop}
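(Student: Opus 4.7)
The plan is to first construct the flasque resolution of $M$ from the given flasque quasi-resolution, then identify the pushforward of $Z$ along $M \to S$ with $Y$ using the classification of torsors by type and base fibre from Proposition \ref{sequesa}.

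First I would build the candidate map $M \to S$. Since $Y \overset{S}{\longrightarrow} X$ is a flasque quasi-resolution of $(X,x)$, Lemma \ref{nonso} gives that its type $t\colon \widehat S \to \pico X = \widehat M$ is surjective with permutation kernel $P$. Dualising the short exact sequence of Galois lattices $0 \to P \to \widehat S \to \widehat M \to 0$ yields an exact sequence of groups of multiplicative type
\begin{equation*}
        \begin{tikzpicture}[baseline= (a).base]
        \node[scale=1] (a) at (0,0){
        \begin{tikzcd}
        1 \arrow[r]& M\arrow[r] & S \arrow[r] & E \arrow[r] & 1,
       \end{tikzcd}};
        \end{tikzpicture}
\end{equation*}
in which $E$ is a quasi-trivial torus (its character lattice being the permutation lattice $P$) and $S$ is flasque by hypothesis. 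This is therefore a flasque resolution of $M$.

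Next I would form the pushforward $Y' := Z \wedge^{M} S \to X$ of the universal $M$-torsor along the map $M \to S$ just constructed. Since $Z \to X$ is trivial over $x$, so is $Y' \to X$, and $Y \to X$ is trivial over $x$ by the definition of a flasque quasi-resolution. By Proposition \ref{sequesa} (and the remark following it), an $S$-torsor over $X$ is determined, up to isomorphism, by its type and its fibre over $x$; thus it suffices to show that $Y$ and $Y'$ have the same type.

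This is a diagram chase identical to the one in the proof of Proposition \ref{vera}. In the commutative square
\begin{equation*}
        \begin{tikzpicture}[baseline= (a).base]
        \node[scale=1] (a) at (0,0){
        \begin{tikzcd}
         \text{H}^1(X, S)\arrow[r] & \text{Hom}_{\Gamma_k}(\widehat S, \widehat M)\\
         \text{H}^1(X, M)\arrow[r]\arrow[u] & \text{Hom}_{\Gamma_k}(\widehat M, \widehat M)\arrow[u],
       \end{tikzcd}};
        \end{tikzpicture}
\end{equation*}
the left vertical arrow is the pushforward by $M\rightarrow S$ and the right vertical arrow is composition with $\widehat S \to \widehat M$. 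Chasing the class of $Z \to X$ shows that the type of $Y' \to X$ is the composite $\widehat S \to \widehat M \xrightarrow{\id} \widehat M$, which is precisely the type $t$ of $Y \to X$ that we began with. Consequently $Y \simeq Y'$ as $S$-torsors over $X$, which is exactly the required conclusion.

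The only real subtlety I anticipate is making sure the construction of the map $M \to S$ in the first step is functorial enough to use the exact sequence of Proposition \ref{sequesa}; everything else reduces to the already-established classification of torsors by type and to Lemma \ref{nonso}.
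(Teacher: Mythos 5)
Your proposal is correct and follows essentially the same route as the paper: both obtain the flasque resolution $1\to M\to S\to E\to 1$ by dualising the type sequence supplied by Lemma \ref{nonso}, and both then identify $Y$ with the pushforward of the universal torsor by comparing types via the functorial diagram of Proposition \ref{sequesa} and using triviality of both torsors over $x$. The functoriality you flag as a possible subtlety is exactly what Proposition \ref{sequesa} asserts, so there is no gap.
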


\begin{proof}
By applying Lemma \ref{nonso}, we obtain a flasque resolution: 
\begin{equation*}
        \begin{tikzpicture}[baseline= (a).base]
        \node[scale=1] (a) at (0,0){
        \begin{tikzcd}
        0\arrow[r]&\widehat E\arrow[r]&\widehat{S}\arrow[r]& \widehat M\arrow[r]& 0,
       \end{tikzcd}};
        \end{tikzpicture}
\end{equation*}
where $\widehat S\rightarrow \widehat M$ is the type of the torsor. By Proposition \ref{sequesa}, there is the following commutative diagram with split exact sequences:
\begin{equation*}
        \begin{tikzpicture}[baseline= (a).base]
        \node[scale=1] (a) at (0,0){
        \begin{tikzcd}
        0\arrow[r] & \text{H}^1(k,S)\arrow[r] & \text{H}^1(X, S)\arrow[r] & \text{Hom}_{\Gamma_k}(\widehat {S}, \widehat M)\arrow[r] & 0\\
         0\arrow[r] & \text{H}^1(k,M)\arrow[r]\arrow[u] & \text{H}^1(X, M)\arrow[r]\arrow[u] & \text{Hom}_{\Gamma_k}(\widehat M, \widehat M)\arrow[r]\arrow[u] & 0.
       \end{tikzcd}};
        \end{tikzpicture}
\end{equation*}
We can conclude that the $S$-torsor $Y\longrightarrow X$ is isomorphic to the pushforward of the $M$-torsor $Z\longrightarrow X$ by $M\rightarrow S$. Indeed, by diagram chasing in the right square, we can deduce that these two $S$-torsors have the same type. Furthermore, since they are both trivial over $x$, they are isomorphic. 
\end{proof}

\begin{oss}
There is a similar result for coflasque quasi-resolutions as well.
\end{oss}



In the remaining part of this section, we will be only interested in flasque quasi-resolutions, as the analogous results for coflasque quasi-resolutions are straightforward and less interesting. 

We fix two points $x_1, x_2 \in X(k)$. Our next objective is to understand how flasque quasi-resolutions of $(X,x_1)$ and $(X, x_2)$ are related. 

Let $Y_1\overset{S}{\longrightarrow}X$ be a flasque quasi-resolution of $(X,x_1)$, and let $ Y_{1,x_2}\longrightarrow \text{Spec}\ k$ be the fibre over $x_2$. We can define a bijection by subtracting the class of the $S$-torsor $X \times_k Y_{1,x_2}\longrightarrow X$:
\begin{equation*}
        \begin{tikzpicture}[baseline= (a).base]
        \node[scale=1] (a) at (0,0){
        \begin{tikzcd}
        \text{H}^1(X,S) \arrow[r, "\psi"] & \text{H}^1(X,S).
       \end{tikzcd}};
        \end{tikzpicture}
\end{equation*}
We denote a representative of the image of the $S$-torsor $Y_1\longrightarrow X$ by $\psi$ as $Y_2\longrightarrow X$. 

\begin{prop}
\label{corresp} 
The $S$-torsor $Y_2\longrightarrow X$ is a flasque quasi-resolution of $(X,x_2)$. Furthermore, the previous construction establishes a bijection, up to isomorphism, between flasque quasi-resolutions of $(X,x_1)$ and flasque quasi-resolutions of $(X,x_2)$.
\end{prop}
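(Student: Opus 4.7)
The plan is to reduce everything to two bookkeeping computations using the functorial exact sequence of Proposition \ref{sequesa} together with Lemma \ref{nonso}.

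First, I would observe that $\psi$ is tautologically a bijection of $H^1(X,S)$ since it is translation by the fixed class $[X\times_k Y_{1,x_2}]$. The crucial point is that this class comes from $H^1(k,S)$ via the pull-back along $X\to \spec k$, hence lies in the image of the first injection in the exact sequence of Proposition \ref{sequesa}; by exactness, its type in $\Homl_{\Gamma_k}(\widehat S, \widehat M)$ is zero. Therefore $\psi$ preserves types, and in particular the type of $Y_2 \to X$ equals the type of $Y_1 \to X$.

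Next, to show that $Y_2 \to X$ is a flasque quasi-resolution of $(X,x_2)$, by Lemma \ref{nonso} it suffices to check that (i) its type $\widehat S \to \widehat M$ is surjective with permutation kernel, and (ii) its fibre over $x_2$ is the trivial $S$-torsor. Property (i) transfers from $Y_1 \to X$ by the first paragraph, since $Y_1 \to X$ is assumed to be a flasque quasi-resolution of $(X,x_1)$. For property (ii), I would evaluate both terms defining $\psi([Y_1])$ at $x_2$: the class $[Y_1]$ restricts to $[Y_{1,x_2}]$ by definition, and the constant torsor $[X\times_k Y_{1,x_2}]$ also restricts to $[Y_{1,x_2}]$. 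Their difference is zero in $H^1(k,S)$, so $Y_{2,x_2}$ is trivial, as required.

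For the bijection, I would construct the inverse by applying the symmetric recipe: given any flasque quasi-resolution $Y_2' \overset{S}{\longrightarrow} X$ of $(X,x_2)$, set $\psi'([Y_2']) := [Y_2'] - [X\times_k Y_{2',x_1}]$. The first part (with $x_1$ and $x_2$ swapped) shows that $\psi'([Y_2'])$ is a flasque quasi-resolution of $(X,x_1)$. That $\psi'\circ\psi$ is the identity on isomorphism classes is then a short computation: evaluating $[Y_2] = [Y_1] - [X\times_k Y_{1,x_2}]$ at $x_1$ yields $[Y_{2,x_1}] = 0 - [Y_{1,x_2}] = -[Y_{1,x_2}]$, so the two constant torsors subtracted in $\psi$ and $\psi'$ cancel and we recover $[Y_1]$. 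The opposite composition is handled identically.

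The only genuine obstacle in this plan is the verification that evaluating at a $k$-point commutes with the translation by a constant $S$-torsor and hence that the two subtractions cancel; this is where one uses that $X\times_k Y_{1,x_2}$ is literally pulled back from $\spec k$, so its fibre over any $k$-point is $Y_{1,x_2}$ itself. Everything else is a routine application of Proposition \ref{sequesa} and Lemma \ref{nonso}.
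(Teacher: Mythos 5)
Your proposal is correct and follows essentially the same route as the paper: triviality of the fibre over $x_2$ via the compatibility of $\psi$ with evaluation at $x_2$, preservation of the type via additivity in the exact sequence of Proposition \ref{sequesa}, and then Lemma \ref{nonso}. Your explicit verification that the symmetric recipe inverts $\psi$ is slightly more detailed than the paper's "repeat the construction", but it is the same argument.
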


\begin{proof}
There is a commutative diagram 
\begin{equation*}
        \begin{tikzpicture}[baseline= (a).base]
        \node[scale=1] (a) at (0,0){
        \begin{tikzcd}
        \text{H}^1(X,S)\arrow[r, "\psi"]\arrow[d] & \text{H}^1(X,S)\arrow[d] \\
        \text{H}^1(k,S) \arrow[r, "\psi'"] & \text{H}^1(k,S),
       \end{tikzcd}};
        \end{tikzpicture}
\end{equation*}
where the vertical arrows are the evaluations in $x_2$, and $\psi'$ is the subtraction by the $S$-torsor $Y_{1,x_2}\longrightarrow \text{Spec}\ k$. This diagram shows that the $S$-torsor $Y_2\longrightarrow X$ is trivial over $x_2$. According to Proposition \ref{sequesa}, there exists an exact sequence:
\begin{equation*}
        \begin{tikzpicture}[baseline= (a).base]
        \node[scale=1] (a) at (0,0){
        \begin{tikzcd}
        0 \arrow[r] & \text{H}^1(k,S)\arrow[r] & \text{H}^1(X,S)\arrow[r]& \text{Hom}_{\Gamma_k}(\widehat S, \widehat M)\arrow[r] & 0. 
       \end{tikzcd}};
        \end{tikzpicture}
\end{equation*}
Since the maps of the above exact sequence are additive, the two $S$-torsors $Y_1\longrightarrow X$ and $Y_2\longrightarrow X$ have the same type. In particular, by Lemma \ref{nonso}, the $S$-torsor $Y_2\longrightarrow X$ is a flasque quasi-resolution of $(X,x_2)$. By repeating the construction for all flasque quasi-resolutions of $(X,x_1)$, then we obtain a bijection, up to isomorphism, with flasque quasi-resolutions of $(X,x_2)$. 
\end{proof}

Now that we have completely determined the flasque quasi-resolutions of an algebraic $k$-variety with trivial $\U(X)$, we can turn our attention to the investigation of the evaluation map.

Let $Y\overset{S}{\longrightarrow} X$ be a flasque quasi-resolution of $(X,x)$. The evaluation of such a torsor in a $k$-point of $X$ defines a map as follows:
\begin{equation*}
        \begin{tikzpicture}[baseline= (a).base]
        \node[scale=1] (a) at (0,0){
        \begin{tikzcd}
        X(k) \arrow[r, "\text{ev}_x"] & \text{H}^1(k,S). 
       \end{tikzcd}};
        \end{tikzpicture}
\end{equation*}
We are going to prove that the evaluation map is well-defined, meaning that it does not depend on the choice of the flasque quasi-resolution of $(X,x)$. 

\begin{oss}
Sometimes we will simply refer to the evaluation map as `ev'. 
\end{oss}

Let $Y_1\overset{S_1}{\longrightarrow}X$ and $Y_2\overset{S_2}{\longrightarrow}X$ be two flasque quasi-resolutions of $(X,x)$. According to Proposition \ref{uni}, there are two flasque resolutions
\begin{equation*}
        \begin{tikzpicture}[baseline= (a).base]
        \node[scale=1] (a) at (0,0){
        \begin{tikzcd}
        1\arrow[r] & M\arrow[r] & S_1\arrow[r] & E_1\arrow[r] & 1
        \end{tikzcd}};
        \end{tikzpicture}
\end{equation*}
and
\begin{equation*}
        \begin{tikzpicture}[baseline= (a).base]
        \node[scale=1] (a) at (0,0){
        \begin{tikzcd}
        1\arrow[r] & M\arrow[r] & S_2\arrow[r] & E_2\arrow[r] & 1
        \end{tikzcd}};
        \end{tikzpicture}
\end{equation*}
such that the two flasque quasi-resolutions are, up to isomorphism, the pushforward of the universal $M$-torsor $Z_0 \longrightarrow M$ by $M\rightarrow S_1$ and $M\rightarrow S_2$, respectively. 

\begin{prop}
\label{triangolo}
There exists the following commutative diagram:
\begin{equation*}
        \begin{tikzpicture}[baseline= (a).base]
        \node[scale=1] (a) at (0,0){
        \begin{tikzcd}
        & \emph{H}^1(k,S_1)\arrow[dd, "\sim", bend left=75]\\
        X(k)\arrow[rd, "\emph{ev}_2"']\arrow[ru, "\emph{ev}_1"]\arrow[r, "\emph{ev}_0"']& \emph{H}^1(k,M)\arrow[d]\arrow[u]\\
        & \emph{H}^1(k,S_2).
       \end{tikzcd}};
        \end{tikzpicture}
\end{equation*}
\end{prop}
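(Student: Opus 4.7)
The plan is to check that the two triangles of the form $X(k)\to \Hh^1(k,M)\to \Hh^1(k,S_i)$ commute, and then to exhibit an isomorphism $\phi\colon \Hh^1(k,S_1)\xrightarrow{\sim}\Hh^1(k,S_2)$ satisfying $\pi_2 = \phi\circ\pi_1$, where $\pi_i\colon \Hh^1(k,M)\to \Hh^1(k,S_i)$ is the map induced by the embedding $M\hookrightarrow S_i$.

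The triangles $\text{ev}_i = \pi_i \circ \text{ev}_0$ follow by functoriality of pushforward of torsors. By construction, $Y_i\longrightarrow X$ is the pushforward of the universal $M$-torsor $Z_0\longrightarrow X$ along $M\to S_i$; restricting to a $k$-point $p\in X(k)$, the fibre $Y_{i,p}\longrightarrow \spec k$ is the pushforward of $Z_{0,p}\longrightarrow \spec k$ along the same map. Translated to cohomology, this is precisely the identity $\text{ev}_i(p)=\pi_i(\text{ev}_0(p))$.

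For the existence of $\phi$, the long exact sequence of cohomology attached to $1\to M\to S_i\to E_i\to 1$ reads
\[
E_i(k)\xrightarrow{\delta_i}\Hh^1(k,M)\xrightarrow{\pi_i}\Hh^1(k,S_i)\to \Hh^1(k,E_i)=0,
\]
since $E_i$ is quasi-trivial. Thus $\pi_i$ is surjective with kernel $K_i := \mathrm{Im}(\delta_i)$, and the desired group isomorphism $\phi$ satisfying $\pi_2 = \phi\circ\pi_1$ exists (and is unique) if and only if $K_1 = K_2$ inside $\Hh^1(k,M)$; the commutativity of the outer triangle $\text{ev}_2 = \phi\circ\text{ev}_1$ would then follow for free.

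To establish $K_1 = K_2$ I plan to invoke Proposition~\ref{fame} together with the classical uniqueness of flasque resolutions up to permutation summands (in the spirit of \cite[Lemma 5]{requiv}): the similarity of the flasque lattices $\widehat{S_1}$ and $\widehat{S_2}$ can be upgraded to a genuine isomorphism of extensions of $M$, of the form $(1\to M\to S_1\oplus P_1\to E_1\oplus P_1\to 1)\simeq (1\to M\to S_2\oplus P_2\to E_2\oplus P_2\to 1)$ for suitable quasi-trivial tori $P_1, P_2$. Passing to the associated long exact sequences and using $\Hh^1(k,P_i)=0$ then identifies $K_1$ and $K_2$ as the same subgroup of $\Hh^1(k,M)$. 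The main obstacle is exactly this lifting step --- that the abstract similarity of $\widehat{S_i}$ as $\Gamma_k$-lattices can be realised by a morphism of extensions respecting the common embedding of $\widehat{M}$ --- which hinges on the projectivity of permutation lattices in the category of $\Gamma_k$-lattices and a chain-homotopy-type diagram chase.
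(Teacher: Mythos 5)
Your overall strategy coincides with the paper's: the two inner triangles are handled identically (functoriality of the pushforward, applied both to $Z_0\to Y_i$ over $X$ and to the fibres over a $k$-point), and the comparison isomorphism is extracted from a common extension dominating both flasque resolutions of $M$. Your reformulation via kernels is also sound: since $\mathrm{H}^1(k,E_i)=0$, each $\pi_i$ is surjective, so an isomorphism $\phi$ with $\pi_2=\phi\circ\pi_1$ exists exactly when $\ker\pi_1=\ker\pi_2$, and the commutativity of the outer triangle then follows formally.

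The one step you flag as ``the main obstacle'' is, however, where your justification goes wrong: permutation lattices are \emph{not} projective in the category of $\Gamma_k$-lattices, so no chain-homotopy argument based on projectivity is available. What actually closes the gap is the pushout torus $R=(S_1\times_k S_2)/M$ (with $M$ embedded anti-diagonally), equivalently the fibre product $\widehat{S_1}\times_{\widehat M}\widehat{S_2}$ on character lattices. It sits in exact sequences $1\to S_1\to R\to E_2\to 1$ and $1\to S_2\to R\to E_1\to 1$, and these split by \cite[Lemma 1]{requiv}, because an extension of a permutation lattice by a flasque lattice splits --- a fact that uses the flasqueness of $\widehat{S_i}$ together with Shapiro's lemma, not any projectivity of $\widehat{E_j}$. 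This produces exactly the isomorphism of extensions you were after, with $P_1=E_2$ and $P_2=E_1$, namely $S_1\times_k E_2\simeq R\simeq S_2\times_k E_1$ compatibly with the two embeddings of $M$. Once $R$ is in hand you can also bypass the kernel computation entirely, as the paper does: the maps $\mathrm{H}^1(k,S_1)\to\mathrm{H}^1(k,R)\leftarrow\mathrm{H}^1(k,S_2)$ are both bijective (split sequences with $\mathrm{H}^1(k,E_j)=0$), and the resulting composite is the desired $\phi$, compatible with the maps from $\mathrm{H}^1(k,M)$ by commutativity of the $3\times 3$ diagram.
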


\begin{proof}
By functoriality of the pushforward, there is the following commutative diagram:
\begin{equation*}
        \begin{tikzpicture}[baseline= (a).base]
        \node[scale=1] (a) at (0,0){
        \begin{tikzcd}
        & \text{H}^1(k,S_1)\\
        X(k)\arrow[rd, "\text{ev}_2"']\arrow[r, "\text{ev}_0"']\arrow[ru, "\text{ev}_1"]& \text{H}^1(k,M)\arrow[d]\arrow[u]\\
        & \text{H}^1(k,S_2).
       \end{tikzcd}};
        \end{tikzpicture}
\end{equation*}
To conclude, we need to construct the last part of the diagram in the statement. We start with the following commutative diagram:
\begin{equation*}
        \begin{tikzpicture}[baseline= (a).base]
        \node[scale=1] (a) at (0,0){
        \begin{tikzcd}
         & 1 \arrow[d] & 1\arrow[d] \\
         1 \arrow[r]& M\arrow[d]\arrow[r] & S_1\arrow[d] \arrow[r]& E_1 \arrow[r]\arrow[d, "\id"]& 1 \\
         1\arrow[r] & S_2\arrow[d]\arrow[r] & R\arrow[d]\arrow[r] & E_1\arrow[r] & 1 \\
         & E_2\arrow[d]\arrow[r, "\id"] & E_2 \arrow[d]\\
         & 1 & 1. \\
       \end{tikzcd}};
        \end{tikzpicture}
    \end{equation*}
Since $\text{H}^1(k,E)$ and $\text{H}^1(k,E')$ are trivial, and the central exact sequences are split, we have the following diagram:
\begin{equation*}
        \begin{tikzpicture}[baseline= (a).base]
        \node[scale=1] (a) at (0,0){
        \begin{tikzcd}
         \text{H}^1(k,S_1)\arrow[rd, "\sim"]\\
         \text{H}^1(k,M)\arrow[r]\arrow[d]\arrow[u]&\text{H}^1(k,R)\\
         \text{H}^1(k,S_2).\arrow[ru, "\sim"']
       \end{tikzcd}};
        \end{tikzpicture}
\end{equation*}
By merging the two diagrams, we obtain the statement.
\end{proof}

We conclude this section by demonstrating that the evaluation map is independent of the choice of the base point.

\begin{prop}
\label{diagrammabello1}
Let $x_1$ and $x_2$ be elements in $X(k)$. Let $Y_1\overset{S_1}{\longrightarrow}X$ and $Y_2\overset{S_2}{\longrightarrow}X$ be two flasque quasi-resolutions of $(X,x_1)$ and $(X,x_2)$, respectively. There exists a commutative diagram:
\begin{equation*}
        \begin{tikzpicture}[baseline= (a).base]
        \node[scale=1] (a) at (0,0){
        \begin{tikzcd}
        & \emph{H}^1(k,S_1)\arrow[dd, "\sim"]\\
        X(k)\arrow[rd, "\eve_2"']\arrow[ru, "\eve_1"]\\
        & \emph{H}^1(k,S_2).
       \end{tikzcd}};
        \end{tikzpicture}
\end{equation*}
\end{prop}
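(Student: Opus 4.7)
The plan is to reduce to the common-base-point case handled by Proposition \ref{triangolo}, using the bijection from Proposition \ref{corresp} to transport one of the flasque quasi-resolutions to the other marked point, and then correcting by a translation. This keeps everything inside the classification machinery already developed in this subsection, so no new geometric input is required.

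First, I would apply Proposition \ref{corresp} to $Y_1 \overset{S_1}{\longrightarrow} X$ with new base point $x_2$ to obtain a flasque quasi-resolution $Y_1' \overset{S_1}{\longrightarrow} X$ of $(X, x_2)$, whose class in $\mathrm{H}^1(X, S_1)$ is $[Y_1] - [X \times_k Y_{1,x_2}]$. Pulling back along any $k$-point $x \in X(k)$ then yields
$$\text{ev}_{1'}(x) \;=\; \text{ev}_1(x) - \text{ev}_1(x_2) \quad \text{in } \mathrm{H}^1(k, S_1),$$
since the second summand restricts on any fibre to the class of $Y_{1, x_2}$. In particular $\text{ev}_{1'}(x_2) = 0$, confirming triviality of the new torsor over $x_2$.

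Now both $Y_1' \overset{S_1}{\longrightarrow} X$ and $Y_2 \overset{S_2}{\longrightarrow} X$ are flasque quasi-resolutions of the same pointed variety $(X, x_2)$, so Proposition \ref{triangolo} supplies an isomorphism $\phi: \mathrm{H}^1(k, S_1) \xrightarrow{\sim} \mathrm{H}^1(k, S_2)$ satisfying $\phi \circ \text{ev}_{1'} = \text{ev}_2$. Defining $\psi(a) := \phi(a - \text{ev}_1(x_2))$ produces a bijection $\mathrm{H}^1(k, S_1) \to \mathrm{H}^1(k, S_2)$ that, by construction, satisfies $\psi \circ \text{ev}_1 = \text{ev}_2$, yielding the required commutative triangle.

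The main subtlety, more than an obstacle, is that $\psi$ is only an affine bijection of underlying sets and not in general a group homomorphism, because $\text{ev}_1(x_2)$ need not be zero. This is intrinsic to the situation: the canonical base points of $\mathrm{H}^1(k, S_1)$ and $\mathrm{H}^1(k, S_2)$ correspond to the two different marks $x_1$ and $x_2$. Accordingly, the $\sim$ in the statement should be read as a bijection of sets (equivalently, a morphism of torsors under $\mathrm{H}^1(k, S_2)$), rather than as an isomorphism of abelian groups.
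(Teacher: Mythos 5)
Your proposal is correct and follows essentially the same route as the paper: reduce to the common-base-point case by using Proposition \ref{corresp} to replace $Y_1$ with a flasque quasi-resolution of $(X,x_2)$, observe that the two evaluation maps into $\mathrm{H}^1(k,S_1)$ differ by translation by the class of $Y_{1,x_2}$, and then invoke Proposition \ref{triangolo}. Your closing remark that the resulting bijection is only affine (a translation composed with a group isomorphism) is consistent with the paper, whose vertical arrow is likewise described as addition of the class of $Y_{1,x_2}\longrightarrow \mathrm{Spec}\ k$.
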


\begin{proof}
By Proposition \ref{triangolo}, it is sufficient to reduce to the case in which the two points coincide. According to Proposition \ref{corresp}, there is a flasque quasi-resolution $Y'_1\overset{S_1}{\longrightarrow}X$ of $(X,x_2)$, which is, up to isomorphism, the image of $Y_1\overset{S_1}{\longrightarrow}X$ by $\psi$. By functoriality, we can establish the following commutative diagram:
\begin{equation*}
        \begin{tikzpicture}[baseline= (a).base]
        \node[scale=1] (a) at (0,0){
        \begin{tikzcd}
        & \text{H}^1(k,S_1)\arrow[dd, "\sim"]\\
        X(k)\arrow[rd, "\text{ev}_1"']\arrow[ru, "\text{ev}_1'"]\\
        & \text{H}^1(k,S_1).
       \end{tikzcd}};
        \end{tikzpicture}
\end{equation*}
In this diagram, the vertical arrow maps a torsor to its sum with the class of the $S$-torsor $Y_{1,x_2}\longrightarrow \text{Spec}\ k$. 
\end{proof}

\subsection{General case}
\label{altrolabel}

In this section, we are going to expand the above statements to smooth algebraic varieties that do not satisfy the condition $\text{U}(X)=0$. The idea is to reduce the proof to the previous case by embedding $X$ into a smooth compactification $X_c$. We recall that $X$ is a smooth algebraic $k$-variety such that $\pic(\overline{ X})$ is finitely generated and $X(k)$ is non-empty. The three main results of this subsection are the following. First of all, we extend the existence of flasque quasi-resolutions. Then, we show that they are, in some sense, unique. In the end, we give conditions to obtain the independence of the evaluation map by the choice of the base point and the flasque quasi-resolution.

The proof of the existence of flasque quasi-resolutions is inspired by a result of Colliot-Th\'{e}l\`{e}ne (see \cite[Theorem 5.4]{MR2404747}). 

\begin{prop}
\label{question}
There exists a flasque quasi-resolution of $(X,x)$.
\end{prop}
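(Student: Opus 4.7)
The plan is to reduce to the special case already handled in Subsection \ref{special} by passing to a smooth compactification. Since the characteristic is zero, Hironaka's theorem supplies a smooth compactification $X_c$ of $X$, and the open embedding $j:X\hookrightarrow X_c$ sends our base point $x$ to a $k$-point of $X_c$, which by abuse of notation we still call $x$. Because $X_c$ is proper and geometrically integral, $\overline{k}[\overline{X_c}]^{*}=\overline{k}^{*}$, so $\U(X_c)=0$. Moreover, the exact sequence recalled in the earlier remark,
\begin{equation*}
0\longrightarrow \U(X)\longrightarrow \text{Div}_{\overline{Z}}\overline{X_c}\longrightarrow \pico{X_c}\longrightarrow \pico{X}\longrightarrow 0,
\end{equation*}
together with the fact that $\text{Div}_{\overline{Z}}\overline{X_c}$ is a permutation lattice (hence finitely generated) shows that the finite generation of $\pico{X}$ forces $\pico{X_c}$ to be finitely generated as well.

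With these two properties in hand, $X_c$ satisfies precisely the hypotheses of the special case treated in Subsection \ref{special}. I would then invoke Proposition \ref{vera}: starting from the universal $M$-torsor over $X_c$ that is trivial over $x$ (where $\widehat{M}=\pico{X_c}$) and from any flasque resolution
\begin{equation*}
        \begin{tikzcd}
        1\arrow[r] & M\arrow[r] & S\arrow[r] & E\arrow[r] & 1,
        \end{tikzcd}
\end{equation*}
the pushforward yields a flasque quasi-resolution $Y_c \overset{S}{\longrightarrow}X_c$ of $(X_c,x)$.

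Finally, I would pull this resolution back along $j$. Since $j$ is an open embedding whose image contains $x$, Proposition \ref{colliotprati} applies and guarantees that the base change $Y:=Y_c\times_{X_c}X \overset{S}{\longrightarrow}X$ is a flasque quasi-resolution of $(X,x)$. The trivialization over $x$ is preserved by pullback, and the $\mathbb{G}_m$-quasi-triviality of $Y$ is exactly what Proposition \ref{colliotprati} provides, so the construction is complete.

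The only potentially delicate point is ensuring the compactification has the required properties, but both $\U(X_c)=0$ and the finite generation of $\pico{X_c}$ follow from standard facts and the remark cited above, so no real obstacle arises; the whole argument is an assembly of results already at our disposal.
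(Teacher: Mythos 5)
Your proposal is correct and follows essentially the same route as the paper: pass to a smooth compactification $X_c$, verify that $\U(X_c)=0$ and that $\pico{X_c}$ is finitely generated, apply Proposition \ref{vera} to obtain a flasque quasi-resolution of $(X_c,x)$, and restrict to $X$ via Proposition \ref{colliotprati}. The only difference is that you spell out the finite-generation argument for $\pico{X_c}$ using the divisor exact sequence, which the paper leaves implicit.
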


\begin{proof}
The algebraic $k$-variety $X_c$ satisfies the hypotheses of Section \ref{special}. Indeed $\U (X_c)$ is trivial, because $X_c$ is proper, and $\pico {X_c}$ is finitely generated, because $\pico X$ is finitely generated. By Proposition \ref{vera}, there exists a flasque quasi-resolution of $(X_c,x)$. By Proposition \ref{colliotprati}, the restriction to $X$ is a flasque quasi-resolution of $(X,x)$.  
\end{proof}

By a similar approach, we can establish the following result:

\begin{prop}
\label{esistenza2}
There exists a coflasque quasi-resolution of $X$.
\end{prop}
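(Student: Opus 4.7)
The strategy is to mirror, step by step, the proof of Proposition \ref{question}, substituting each invoked flasque statement by its coflasque analogue (whose existence is explicitly noted in the remarks accompanying the flasque results). First, since $\chara k = 0$, Hironaka's theorem provides a smooth compactification $X_c$ of $X$. As in the proof of Proposition \ref{question}, the variety $X_c$ fits into the setting of Section \ref{special}: the module $\U(X_c)$ vanishes because $X_c$ is proper, and $\pico{X_c}$ is finitely generated because $\pico X$ is (via the exact sequence recalled in the remark following the existence proposition at the start of the section).

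Next I would construct a coflasque quasi-resolution of $X_c$ using the coflasque analogue of Proposition \ref{vera}. Setting $\widehat M = \pico{X_c}$, the dual version of Proposition \ref{fame} applied to $\widehat M$ yields a short exact sequence of groups of multiplicative type
\begin{equation*}
1 \to M \to P \to D \to 1
\end{equation*}
with $P$ a quasi-trivial torus and $D$ a coflasque torus. Let $Z_0 \to X_c$ be the universal $M$-torsor trivial over $x$ (provided by Proposition \ref{sequesa}), and let $Y_0 \to X_c$ be its pushforward along $M \to P$. As in the proof of Proposition \ref{vera}, diagram chasing shows that the type of $Y_0 \to X_c$ is the surjection $\widehat P \twoheadrightarrow \widehat M$. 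Sansuc's exact sequence, combined with $\U(X_c)=0$ and $\pico P = 0$, then reduces to
\begin{equation*}
0 \to \U(Y_0) \to \widehat P \to \widehat M \to \pico{Y_0} \to 0,
\end{equation*}
whence $\pico{Y_0}=0$ and $\U(Y_0) \simeq \widehat D$ is a coflasque lattice. Hence $Y_0 \to X_c$ is a coflasque quasi-resolution.

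Finally, I would restrict $Y_0 \to X_c$ along the open embedding $X \hookrightarrow X_c$. By the coflasque analogue of Proposition \ref{colliotprati} (explicitly noted in the remark following it), the base change is a coflasque quasi-resolution of $X$. No compatibility at a marked point needs checking, since Definition \ref{defmia2} imposes none. I do not anticipate any substantive obstacle: the three ingredients needed (Hironaka's compactification, and the coflasque analogues of Propositions \ref{vera} and \ref{colliotprati}) have all been set up earlier in the excerpt.
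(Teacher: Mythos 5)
Your proof is correct and follows exactly the route the paper intends: the paper gives no details for this proposition beyond the phrase ``by a similar approach'' referring to Proposition \ref{question}, and your argument is precisely the faithful coflasque translation of that proof (compactify via Hironaka, apply the coflasque analogue of Proposition \ref{vera} using a coflasque resolution $1\to M\to P\to D\to 1$ of $M$ with $\widehat M=\text{Pic}(\overline{X_c})$, then restrict along the open embedding via the coflasque analogue of Proposition \ref{colliotprati}). You also correctly note that no base-point compatibility is required by Definition \ref{defmia2}.
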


Now, we proceed to demonstrate the uniqueness of flasque quasi-resolutions. The statement and the proof are inspired by Colliot-Th\'{e}l\`{e}ne's work (see \cite[Proposition 3.2]{MR2404747}). 

\begin{prop}
\label{due}
Let $Y_1\overset{S_1}{\longrightarrow} X$ and $Y_2\overset{S_2}{\longrightarrow} X$ be flasque quasi-resolutions of $(X,x)$. We can establish the following properties.
\begin{itemize}
    \item There exists an isomorphism of $k$-schemes $Y_1\times_k S_2\simeq Y_2\times_k S_1$. 
    \item There are two quasi-trivial tori $P_1$ and $P_2$, such that $P_1\times_k S_2\simeq P_2\times_k S_1$ as group schemes. 
    \item There exist two morphisms $S_1\rightarrow P_1$ and $S_2\rightarrow P_2$, that induce a bijection $\emph{Ker}(\mathrm H^n(k, S_1)\rightarrow \mathrm H^n(k,P_1))\simeq \emph{Ker}(\mathrm H^n(k, S_2)\rightarrow \mathrm H^n(k,P_2))$, for all non-negative integer $n$. 
\end{itemize}
As a consequence, $\mathrm H^1(k, S_1)\simeq \mathrm H^1(k, S_2)$.
\end{prop}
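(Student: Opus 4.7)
My approach is to use the fibre product $Y_{12}:=Y_1\times_X Y_2$ as a common dominating scheme, exploit the rigidity of flasque torsors over $\mathbb{G}_m$-quasi-trivial bases (Proposition \ref{concentrato}), and then reduce the cohomological comparison to the behaviour of split short exact sequences of tori. Observe that $Y_{12}\longrightarrow Y_1$ is an $S_2$-torsor (as a base change of $Y_2\longrightarrow X$) and $Y_{12}\longrightarrow Y_2$ is an $S_1$-torsor. Since $Y_1$ is $\mathbb{G}_m$-quasi-trivial and $S_2$ is flasque, Proposition \ref{concentrato} implies that the $S_2$-torsor $Y_{12}\longrightarrow Y_1$ is constant. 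Choose $y_i\in Y_i(k)$ in the fibre over $x$ (which exist by definition of a flasque quasi-resolution); the pair $(y_1,y_2)$ gives a $k$-point of $Y_{12}$, so the constant torsor is in fact trivial and $Y_{12}\simeq Y_1\times_k S_2$. By symmetry $Y_{12}\simeq Y_2\times_k S_1$, which establishes the first item.

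Applying the functor $\U$ to this isomorphism, together with the standard product formula and the identification $\U(S_i)\simeq \widehat{S_i}$, produces an isomorphism of $\Gamma_k$-modules $\U(Y_1)\oplus \widehat{S_2}\simeq \U(Y_2)\oplus \widehat{S_1}$. Since each $Y_i$ is $\mathbb{G}_m$-quasi-trivial, the lattice $\U(Y_i)$ is permutation, so we may define $P_i$ to be the quasi-trivial $k$-torus with $\widehat{P_i}=\U(Y_i)$. As tori are classified by their character $\Gamma_k$-lattices, the identification above corresponds to an isomorphism of group schemes $P_1\times_k S_2\simeq P_2\times_k S_1$, which is the second item. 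Set $T:=P_1\times_k S_2\simeq P_2\times_k S_1$ and define $f_1\colon S_1\to P_1$ as the composition $S_1\hookrightarrow P_2\times_k S_1 \xrightarrow{\sim} P_1\times_k S_2 \twoheadrightarrow P_1$, and $f_2\colon S_2\to P_2$ analogously.

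Each of the two product decompositions of $T$ is a split short exact sequence of tori, so in cohomology we obtain for every $n\geq 0$
\begin{equation*}
\mathrm{H}^n(k,T)\ \simeq\ \mathrm{H}^n(k,P_1)\oplus \mathrm{H}^n(k,S_2)\ \simeq\ \mathrm{H}^n(k,P_2)\oplus \mathrm{H}^n(k,S_1),
\end{equation*}
and in particular both maps $\mathrm{H}^n(k,S_i)\hookrightarrow \mathrm{H}^n(k,T)$ are injective. By construction $f_{i,*}$ factors as this injection followed by the projection onto $\mathrm{H}^n(k,P_j)$, so $\mathrm{Ker}(f_{i,*})$ equals, under this injection, the intersection of the images of $\mathrm{H}^n(k,S_1)$ and $\mathrm{H}^n(k,S_2)$ inside $\mathrm{H}^n(k,T)$. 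This common intersection provides the required bijection between the two kernels. Finally, since $P_1$ and $P_2$ are quasi-trivial tori, Hilbert 90 gives $\mathrm{H}^1(k,P_i)=0$, so at $n=1$ the kernels coincide with $\mathrm{H}^1(k,S_1)$ and $\mathrm{H}^1(k,S_2)$, yielding the concluding statement. The main subtlety I anticipate is keeping the two direct-sum decompositions of $\mathrm{H}^n(k,T)$ consistently identified so that the intersection genuinely computes both kernels simultaneously; once this bookkeeping is handled, the rest of the argument is formal.
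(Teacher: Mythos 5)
Your argument is correct and follows essentially the same route as the paper: the fibre product $Y_1\times_X Y_2$, rigidity of flasque torsors over $\mathbb G_m$-quasi-trivial bases (Proposition \ref{concentrato}) together with the $k$-point $(y_1,y_2)$, the quasi-trivial tori dual to $\U(Y_1)$ and $\U(Y_2)$, and the identification of both kernels with the intersection of the images of $\mathrm H^n(k,S_1)$ and $\mathrm H^n(k,S_2)$ inside the cohomology of the common middle torus. The only cosmetic difference is that you obtain the lattice isomorphism $\U(Y_1)\oplus\widehat{S_2}\simeq\U(Y_2)\oplus\widehat{S_1}$ by applying $\U$ and Rosenlicht's product formula directly to the scheme isomorphism from the first item, whereas the paper runs the Sansuc exact sequence on both torsor structures and invokes the splitting of a flasque-by-quasi-trivial extension; both yield the same decompositions.
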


\begin{proof}
    Consider the following cartesian diagram:
    \begin{equation*}
        \begin{tikzpicture}[baseline= (a).base]
        \node[scale=1] (a) at (0,0){
        \begin{tikzcd}
         Y_1\times_X Y_2 \arrow[r, "S_2"]\arrow[d, "S_1"]& Y_1 \arrow[d, "S_1"]\\
         Y_2 \arrow[r, "S_2"] & X.
       \end{tikzcd}};
        \end{tikzpicture}
\end{equation*}
According to Proposition \ref{concentrato}, torsors under flasque tori over $\mathbb G_m$-quasi-trivial algebraic $k$-varieties are constant. We observe that $(Y_1\times_X Y_2)(k)\neq \emptyset$, since there exist $y_1\in Y_1(k)$ and $y_2\in Y_2(k)$ with the same image $x\in X(k)$. This implies that the scheme $Y_1\times_X Y_2$ is isomorphic to $Y_1\times_k S_2$ (resp. $S_1\times_k Y_2$) as $S_2$-torsor (resp. $S_1$-torsor) over $Y_1$ (resp. $Y_2$). This establishes the first point. Next, we apply the exact sequence of Sansuc to the $S_2$-torsor $Y_1\times_X Y_2\longrightarrow Y_1$:
 \begin{equation*}
        \begin{tikzpicture}[baseline= (a).base]
        \node[scale=1] (a) at (0,0){
        \begin{tikzcd}
        0\arrow[r] & \U(Y_1)\arrow[r] & \U(Y_1\times_X Y_2) \arrow[r] & \widehat{S_2} \arrow[r] &  0, 
       \end{tikzcd}};
        \end{tikzpicture}
\end{equation*}
since $Y_1$ is $\mathbb G_m$-quasi-trivial. We do the same with the $S_1$-torsor $Y_1\times_X Y_2 \longrightarrow Y_2$:
\begin{equation*}
        \begin{tikzpicture}[baseline= (a).base]
        \node[scale=1] (a) at (0,0){
        \begin{tikzcd}
        0\arrow[r] & \U(Y_2)\arrow[r] & \U(Y_1\times_X Y_2) \arrow[r] & \widehat{S_1} \arrow[r] &  0.
       \end{tikzcd}};
        \end{tikzpicture}
\end{equation*}
This lead to the corresponding dual diagram of tori:
\begin{equation*}
        \begin{tikzpicture}[baseline= (a).base]
        \node[scale=1] (a) at (0,0){
        \begin{tikzcd}
        1\arrow[r] & S_2\arrow[r] & R \arrow[r]\arrow[d,"\id"] & P_1 \arrow[r] & 1 \\
        1\arrow[r] & S_1\arrow[r] & R \arrow[r] & P_2 \arrow[r] & 1,
       \end{tikzcd}};
        \end{tikzpicture}
\end{equation*}
where $P_1$ and $P_2$ are quasi-trivial tori. According to \cite[Lemma 1]{requiv}, the above exact sequences are split, which gives us the group isomorphism:
$$P_1\times_k S_2\simeq P_2\times_k S_1.$$
It remains to prove the last point. By composition, we can define the morphisms $S_2\rightarrow P_2$ and $S_1\rightarrow P_1$. Since the sequences are split, we obtain:
\begin{equation*}
        \begin{tikzpicture}[baseline= (a).base]
        \node[scale=1] (a) at (0,0){
        \begin{tikzcd}
        0\arrow[r] & \text{H}^n(k,S_2)\arrow[r] & \text{H}^n(k,R) \arrow[r]\arrow[d, "\id"] & \text{H}^n(k,P_1) \arrow[r] & 0 \\
        0\arrow[r] & \text{H}^n(k,S_1)\arrow[r] & \text{H}^n(k,R) \arrow[r] & \text{H}^n(k,P_2) \arrow[r] & 0. 
       \end{tikzcd}};
        \end{tikzpicture}
\end{equation*}
By diagram chasing, we establish the last point.
\end{proof}

\begin{oss}
\label{sonostanco}
Since the cohomology of quasi-trivial tori is zero, in the previous proposition, both items two and three imply the existence of an isomorphism 
\begin{equation*}
        \begin{tikzpicture}[baseline= (a).base]
        \node[scale=1] (a) at (0,0){
        \begin{tikzcd}
        \text{H}^1(k,S_1)\arrow[r, "\sim"] & \text{H}^1(k,S_2).
       \end{tikzcd}};
        \end{tikzpicture}
\end{equation*}
A priori they are different morphisms, but we can prove that they are, in fact, the same map. Consider the following commutative diagram:
\begin{equation*}
        \begin{tikzpicture}[baseline= (a).base]
        \node[scale=1] (a) at (0,0){
        \begin{tikzcd}
        1 \arrow[r] & S_1 \arrow[r]\arrow[d, "\id"] & S_1\times_k P_2 \arrow[r]\arrow[d, "\sim"] & P_2 \arrow[r]\arrow[d, "\id"] & 1 \\
        1 \arrow[r] & S_1 \arrow[r] & R \arrow[r]\arrow[d, "\id"] & P_2 \arrow[r] & 1 \\
        1 \arrow[r] & S_2 \arrow[r]\arrow[d, "\id"] & R \arrow[r]\arrow[d, "\sim"] & P_1 \arrow[r]\arrow[d, "\id"] & 1 \\
        1 \arrow[r] & S_2 \arrow[r] & S_2\times_k P_1 \arrow[r] & P_1 \arrow[r] & 1.
       \end{tikzcd}};
        \end{tikzpicture}
\end{equation*}
By construction, the isomorphism of the second item of Proposition \ref{due} corresponds to the vertical central composition map. 
\begin{itemize}
    \item The first isomorphism is induced by:
        \begin{equation*}
        \begin{tikzpicture}[baseline= (a).base]
        \node[scale=1] (a) at (0,0){
        \begin{tikzcd}
        S_1\arrow[r] & S_1\times_k P_2 \arrow[r, "\sim"] & R \arrow[r, "\sim"] & S_2\times_k P_1 \arrow[r] & S_2.
       \end{tikzcd}};
        \end{tikzpicture}
\end{equation*}
    \item The second isomorphism is the composition 
\begin{equation*}
        \begin{tikzpicture}[baseline= (a).base]
        \node[scale=1] (a) at (0,0){
        \begin{tikzcd}
        \text{H}^1(k,S_1)\arrow[r, "\sim"] & \text{H}^1(k,R) & \text{H}^1(k,S_2).\arrow[l, "\sim"']
       \end{tikzcd}};
        \end{tikzpicture}
\end{equation*}
\end{itemize}
By diagram chasing, we conclude that these two maps must coincide. The first interpretation is more concrete, but the second one allows us to conclude that the isomorphism does not depend on the chosen sections of the split exact sequences.
\end{oss}

\begin{oss}
Consider a flasque quasi-resolution $Y\overset{S}{\longrightarrow} X$ of $(X,x)$. The existence and the uniqueness results establish that the similarity class of $\widehat S$ is well defined and depends only on $\pico {X_c}$. Furthermore, it does not depend on the base point.
\end{oss}

For coflasque quasi-resolutions, the proposition can be formulated as follows. The argument follows the same lines as Proposition \ref{due}.

\begin{prop}
\label{unico2}
Let $Y_1\overset{P_1}{\longrightarrow} X$ and $Y_2\overset{P_2}{\longrightarrow} X$ be coflasque quasi-resolutions of $X$. We can establish the following properties.
\begin{itemize}
    \item There is an isomorphism of $k$-schemes $Y_1\times_k P_2\simeq Y_2\times_k P_1$. 
    \item Let $Q_1$ and $Q_2$ be the coflasque tori with characters group $\Ue (Y_1)$ and $\Ue (Y_2)$, respectively. There is an isomorphism $Q_1\times_k P_2\simeq Q_2\times_k P_1$ as group schemes.
    \item There are two morphisms $P_1\rightarrow Q_1$ and $S_2\rightarrow Q_2$, inducing a bijection $\emph{Coker}(\mathrm H^n(k, P_1)\rightarrow \mathrm H^n(k,Q_1))\simeq \emph{Coker}(\mathrm H^n(k, P_2)\rightarrow \mathrm H^n(k,Q_2))$, for all non-negative integer $n$. 
\end{itemize}
As a consequence, $\mathrm H^1(k, Q_1)\simeq \mathrm H^1(k, Q_2)$.
\end{prop}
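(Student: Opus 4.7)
The plan is to mirror the proof of Proposition~\ref{due} step by step, exchanging the roles of ``flasque'' and ``quasi-trivial'': in place of the rigidity statement that torsors under flasque tori over $\mathbb G_m$-quasi-trivial varieties are constant, I would invoke the second item of Proposition~\ref{concentrato}, according to which torsors under quasi-trivial tori over $\mathbb G_m$-coflasque varieties are already trivial.

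First I would form the cartesian square with $Y_1 \times_X Y_2$, which realises the two projections as a $P_2$-torsor over $Y_1$ and a $P_1$-torsor over $Y_2$. Since each $Y_i$ is $\mathbb G_m$-coflasque and each $P_j$ is quasi-trivial, both torsors are trivial, giving
\begin{equation*}
Y_1 \times_X Y_2 \simeq Y_1 \times_k P_2 \simeq Y_2 \times_k P_1,
\end{equation*}
which is the first item. Next I would apply Sansuc's exact sequence to each of these trivial torsors; using $\pico{Y_i}=0$, $\widehat{P_j}$ permutation and $\pico{P_j}=0$, I would obtain short exact sequences $0 \to \U(Y_i) \to \U(Y_1\times_X Y_2) \to \widehat{P_j} \to 0$ for the two complementary pairs $(i,j)$, together with $\pico{Y_1\times_X Y_2}=0$. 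In particular $\U(Y_1 \times_X Y_2)$ is coflasque, being an extension of a permutation lattice by a coflasque one. Dualizing and letting $R$ denote the torus with characters $\U(Y_1 \times_X Y_2)$ yields
\begin{equation*}
1 \to P_2 \to R \to Q_1 \to 1, \qquad 1 \to P_1 \to R \to Q_2 \to 1.
\end{equation*}

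The key splitting input is the natural dual of \cite[Lemma~1]{requiv}: any extension of a permutation lattice by a coflasque lattice is split. This follows directly from Shapiro's lemma, since $\text{Ext}^1_\Gamma(\mathbb Z[\Gamma/\Gamma'], N) \simeq \Hh^1(\Gamma', N)$ vanishes whenever $N$ is coflasque. Applying this to the two sequences above, both split, and comparing the two resulting decompositions of $R$ gives $P_2 \times_k Q_1 \simeq P_1 \times_k Q_2$ as group schemes, which is the second item. For the third item, composing $P_i \hookrightarrow R$ with the projection $R \twoheadrightarrow Q_i$ coming from the opposite sequence produces the morphisms $P_1 \to Q_1$ and $P_2 \to Q_2$. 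Passing to cohomology, the split short exact sequences decompose $\Hh^n(k,R)$ in two ways, and a direct diagram chase identifies both $\text{Coker}(\Hh^n(k,P_i) \to \Hh^n(k,Q_i))$ with $\Hh^n(k,R)/(\Hh^n(k,P_1)+\Hh^n(k,P_2))$. The final consequence on $\Hh^1$ follows since $\Hh^1(k,P_j)=0$ for quasi-trivial tori.

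The only step that is not a formal transcription of the proof of Proposition~\ref{due} is the identification of the correct splitting lemma: in the flasque case one needs extensions of a flasque lattice by a permutation lattice to split, whereas here one needs the dual version (permutation by coflasque). Both are immediate consequences of Shapiro's lemma applied to the appropriate definition, so I do not anticipate any genuine obstacle; the argument is essentially a formal dualization of the flasque case.
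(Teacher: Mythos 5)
Your proposal is correct and follows exactly the route the paper intends: the paper omits the proof of Proposition~\ref{unico2}, stating only that it follows the same lines as Proposition~\ref{due}, and your argument is precisely that dualization, correctly replacing the constancy statement by the triviality statement in Proposition~\ref{concentrato} and the splitting of permutation-by-flasque extensions by the vanishing of $\text{Ext}^1$ of a permutation lattice by a coflasque one. All steps check out, including the identification of both cokernels with $\mathrm H^n(k,R)$ modulo the images of $\mathrm H^n(k,P_1)$ and $\mathrm H^n(k,P_2)$.
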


We have seen in Proposition \ref{sistemaresult} and in Remark \ref{remarksenzanome}, that when $X$ is stably $k$-rational, the flasque module $\widehat S$ is stably permutation. Thanks to previous results on uniqueness and existence, we can now comprehensively describe the class of varieties for which this condition holds:

\begin{prop}
\label{hofinitoinomi}
Let $X_c$ be a smooth compactification of $X$. The following statements are equivalent:
\begin{enumerate}
    \item There exists an exact sequence:
    \begin{equation*}
        \begin{tikzpicture}[baseline= (a).base]
        \node[scale=1] (a) at (0,0){
        \begin{tikzcd}
        0 \arrow[r] & \widehat {P_2} \arrow[r] & \widehat {P_1} \arrow[r] & \emph{Pic}(\overline{X_c}) \arrow[r] & 0,
       \end{tikzcd}};
        \end{tikzpicture}
    \end{equation*}
    where $\widehat {P_i}$ are permutation lattices.
    \item If $Y\overset{S}{\longrightarrow} X$ is a flasque quasi-resolution of $(X,x)$, then $\widehat S$ is a stably permutation lattice.
\end{enumerate}
\end{prop}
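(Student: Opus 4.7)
The plan is to move between flasque quasi-resolutions of $(X,x)$ and two-term resolutions of $\pico{X_c}$ by permutation lattices, using Sansuc's sequence on the compactification $X_c$. The key point is that $\U(X_c)=0$ (since $X_c$ is proper), so flasque quasi-resolutions of $(X_c,x)$ fall within the special setting of Section~\ref{special}; moreover they restrict to flasque quasi-resolutions of $(X,x)$ by Proposition~\ref{colliotprati}, and by Proposition~\ref{due} the similarity class of the flasque torus depends only on $\pico{X_c}$. In particular, condition~(2) is equivalent to the same condition for one (equivalently any) flasque quasi-resolution of $(X_c,x)$.

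For the direction (2)~$\Rightarrow$~(1), I would take a flasque quasi-resolution $Y\overset{S}{\longrightarrow}X_c$ of $(X_c,x)$, which exists by Proposition~\ref{question}. Since $\U(X_c)=0$, Lemma~\ref{nonso} produces an exact sequence
\[
0 \longrightarrow \U(Y) \longrightarrow \widehat{S} \longrightarrow \pico{X_c} \longrightarrow 0
\]
with $\U(Y)$ a permutation lattice. By hypothesis, $\widehat S$ is stably permutation, so I pick permutation lattices $Q,Q'$ with $\widehat S \oplus Q \simeq Q'$ and add $Q$ to the first two terms to obtain
\[
0 \longrightarrow \U(Y)\oplus Q \longrightarrow Q' \longrightarrow \pico{X_c} \longrightarrow 0,
\]
which is the two-term permutation resolution required by~(1).

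For (1)~$\Rightarrow$~(2), I would apply the anti-equivalence between groups of multiplicative type over $k$ and finitely generated $\Gamma_k$-modules to dualize the sequence in~(1), yielding
\[
1 \longrightarrow M \longrightarrow P_1 \longrightarrow P_2 \longrightarrow 1,
\]
where $\widehat M = \pico{X_c}$ and $P_1,P_2$ are quasi-trivial tori. Since quasi-trivial tori are in particular flasque, this is a flasque resolution of $M$ in the sense of Colliot-Th\'{e}l\`{e}ne, so Proposition~\ref{vera} (applied to $X_c$, whose $\U(X_c)$ is trivial and whose $\pico{X_c}$ is finitely generated) produces a flasque quasi-resolution $Y_0\overset{P_1}{\longrightarrow}X_c$ of $(X_c,x)$, namely the pushforward of the universal $M$-torsor along $M\to P_1$. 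Restricting along the open embedding $X\hookrightarrow X_c$ via Proposition~\ref{colliotprati} yields a flasque quasi-resolution of $(X,x)$ whose flasque torus has character lattice $\widehat{P_1}$, a permutation and hence stably permutation lattice. By Proposition~\ref{due} any other flasque quasi-resolution of $(X,x)$ has a character lattice similar to $\widehat{P_1}$, and therefore also stably permutation, giving~(2). I expect no serious obstacle; the only subtlety is that the exact sequence obtained by dualizing~(1) must qualify as a flasque resolution of $M$ in the precise sense needed for Proposition~\ref{vera}, which holds simply because permutation implies flasque.
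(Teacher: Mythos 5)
Your proposal is correct and follows essentially the same route as the paper: for (1)$\Rightarrow$(2) you dualize the permutation resolution, push the universal torsor forward along $M\rightarrow P_1$ via Proposition \ref{vera}/\ref{question}, and invoke the uniqueness of the similarity class from Proposition \ref{due}; for (2)$\Rightarrow$(1) you produce the exact sequence $0\rightarrow \U(Y)\rightarrow \widehat S\rightarrow \text{Pic}(\overline{X_c})\rightarrow 0$ and absorb the stably permutation $\widehat S$ into a permutation lattice by adding a suitable permutation summand to both terms, exactly as in the paper's proof. The only cosmetic difference is that you obtain that sequence from Lemma \ref{nonso} applied on $X_c$ rather than starting from a flasque resolution of the lattice $\text{Pic}(\overline{X_c})$, which amounts to the same thing.
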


\begin{proof}
$1 \Longrightarrow 2$. According to the construction outlined in Proposition \ref{question}, there exists a flasque quasi-resolution of $(X,x)$ under the torus $P_1$. Furthermore, Proposition \ref{due} establishes that the similarity class of $\widehat S$ is the same of $\widehat{P_1}$, implying that $\widehat S$ is indeed a stably permutation lattice.

$2 \Longrightarrow 1$. Let
\begin{equation*}
        \begin{tikzpicture}[baseline= (a).base]
        \node[scale=1] (a) at (0,0){
        \begin{tikzcd}
        0 \arrow[r] & \widehat P \arrow[r] & \widehat S \arrow[r] & \text{Pic}(\overline{X_c}) \arrow[r] & 0
       \end{tikzcd}};
        \end{tikzpicture}
    \end{equation*}
be a flasque resolution. By the construction of Proposition \ref{question}, there is a flasque quasi-resolution $Y\overset{S}{\longrightarrow} X$ of $(X,x)$. By hypothesis $\widehat S$ is a stably permutation lattice. Since $\widehat P$ is a permutation lattice, we can adjust it by adding an appropriate permutation lattice to both $\widehat S$ and $\widehat P$. This adjustment ensures that both $\widehat S$ and $\widehat P$ become permutation lattices, thus we get the claim.
\end{proof}

\begin{oss}
The previous proposition extends Remark \ref{remarksenzanome}. Indeed, when $X$ is a stably $k$-rational variety, the module $\pico {X_c}$ is stably permutation by \cite[Proposition 2.A.1]{sketch}. Consequently, this implies the existence of a split exact sequence as in the statement of Proposition \ref{hofinitoinomi}.
\end{oss}

The following proposition presents a result analogous to the previous one, but for coflasque quasi-resolutions:

\begin{prop}
Let $X_c$ be a smooth compactification of $X$. The following statements are equivalent:
\begin{enumerate}
    \item There exists an exact sequence:
    \begin{equation*}
        \begin{tikzpicture}[baseline= (a).base]
        \node[scale=1] (a) at (0,0){
        \begin{tikzcd}
        0 \arrow[r] & \widehat {P_2} \arrow[r] & \widehat {P_1} \arrow[r] & \emph{Pic}(\overline{X_c}) \arrow[r] & 0,
       \end{tikzcd}};
        \end{tikzpicture}
    \end{equation*}
    where $\widehat {P_i}$ are permutation lattices.
    \item If $Y\overset{P}{\longrightarrow} X$ is a coflasque quasi-resolution of $X$, then $\Ue (Y)$ is a stably permutation lattice.
\end{enumerate}
\end{prop}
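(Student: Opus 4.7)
The approach mirrors the proof of Proposition \ref{hofinitoinomi}, swapping flasque for coflasque throughout and relying on the coflasque analogs of Propositions \ref{vera}, \ref{colliotprati}, and \ref{unico2} that are stated or flagged as immediate elsewhere in this section. The central observation is that any coflasque resolution $0\rightarrow \widehat{C}\rightarrow \widehat{Q}\rightarrow \pico{X_c}\rightarrow 0$ (with $\widehat{Q}$ permutation) produces a coflasque quasi-resolution of $X_c$ under the quasi-trivial torus $Q$ by pushing the universal torsor along the dual arrow $M\rightarrow Q$; the Sansuc sequence then forces $\U(Y_0)\simeq \widehat{C}$ and $\pico{Y_0}=0$ for the resulting $Q$-torsor $Y_0\rightarrow X_c$.

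To bridge $X_c$ and $X$, I set $Y:=Y_0\times_{X_c}X$ and apply the Sansuc localization sequence to the open embedding $\overline{Y}\hookrightarrow \overline{Y_0}$. Using $\pico{Y_0}=\pico{Y}=0$, this collapses to a short exact sequence
\[
0\rightarrow \U(Y_0)\rightarrow \U(Y)\rightarrow D\rightarrow 0,
\]
where $D$ is the permutation lattice generated by the irreducible components of $\overline{Y_0}\setminus \overline{Y}$. Since $D$ is permutation and $\U(Y_0)\simeq\widehat{C}$ is coflasque, Shapiro's lemma yields $\mathrm{Ext}^1_{\Gamma_k}(D,\widehat{C})=0$, so the sequence splits and $\U(Y)\simeq\widehat{C}\oplus D$.

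For $1\Rightarrow 2$, I plug the hypothesis sequence into the construction: then $\U(Y)\simeq \widehat{P_2}\oplus D$ is a permutation lattice, and the coflasque analog of Proposition \ref{unico2} transfers the stably-permutation property to $\U(Y')$ for every coflasque quasi-resolution $Y'\rightarrow X$, since the similarity class of $\U(\cdot)$ depends only on $X$. For $2\Rightarrow 1$, I start from an arbitrary coflasque resolution of $\pico{X_c}$ produced by Proposition \ref{fame}, run the same construction to get $\U(Y)\simeq \widehat{C}\oplus D$, and invoke hypothesis (2) to conclude that $\widehat{C}\oplus D$ is stably permutation; since $D$ is already permutation, $\widehat{C}$ is stably permutation as well. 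Adding a suitable permutation complement $\widehat{L}$ to both $\widehat{C}$ and $\widehat{Q}$ in the original resolution makes the left term $\widehat{C}\oplus\widehat{L}$ isomorphic to a permutation lattice, yielding the desired sequence in (1).

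The key technical step is the Sansuc localization computation together with the splitting $\U(Y)\simeq \U(Y_0)\oplus D$. Once this $\mathrm{Ext}^1$ vanishing is in place via Shapiro's lemma (relying crucially on $\U(Y_0)$ being coflasque, which is exactly what the coflasque resolution hypothesis delivers), the two implications reduce to symmetric bookkeeping that shifts permutation summands through the resolution.
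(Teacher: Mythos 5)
Your proposal is correct and follows essentially the same route as the paper: both directions rest on constructing a coflasque quasi-resolution over $X_c$ from a (co)flasque resolution of $\mathrm{Pic}(\overline{X_c})$, restricting to $X$, splitting off the permutation lattice of boundary divisors (the paper asserts the splitting from coflasqueness where you spell out the $\mathrm{Ext}^1$ vanishing), and invoking the uniqueness statement of Proposition \ref{unico2} to transfer the stably permutation property. The only cosmetic difference is that for $1\Rightarrow 2$ the paper observes that the flasque quasi-resolution built from the permutation lattice $\widehat{P_1}$ is automatically a coflasque quasi-resolution, whereas you run the coflasque construction directly; the two are interchangeable.
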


\begin{proof}
$1 \Longrightarrow 2$. According to the construction outlined in Proposition \ref{question}, there exists a flasque quasi-resolution of $(X,x)$ under the torus $P_1$. By definition this is also a coflasque quasi-resolution of $X$. Furthermore, Proposition \ref{unico2} establishes that the lattice $\U (Y)$ is stably permutation. 

$2 \Longrightarrow 1$. Following the proof of Proposition \ref{esistenza2}, we consider the following cartesian diagram, where the vertical arrows are coflasque quasi-resolutions:
\begin{equation*}
        \begin{tikzpicture}[baseline= (a).base]
        \node[scale=1] (a) at (0,0){
        \begin{tikzcd}
        Y \arrow[r]\arrow[d, "P"] & Y' \arrow[d, "P"]\\
        X \arrow[r] & X_c.
       \end{tikzcd}};
        \end{tikzpicture}
    \end{equation*}
There exists the following exact sequence, since $\pico {Y'}$ is trivial:
\begin{equation*}
        \begin{tikzpicture}[baseline= (a).base]
        \node[scale=1] (a) at (0,0){
        \begin{tikzcd}
        0 \arrow[r] & \U(Y') \arrow[r] & \U(Y) \arrow[r] & \text{Div}_{\overline{Z}}\overline{Y'} \arrow[r] & 0,
       \end{tikzcd}};
        \end{tikzpicture}
    \end{equation*}
where the Galois lattice $\text{Div}_{\overline{Z}}\overline{Y'}$ is the group of divisors with support in the closed subset $\overline{Z}=\overline{Y'}\setminus \overline{Y}$. Since $\U (Y')$ is a coflasque lattice and $\text{Div}_{\overline{Z}}\overline{Y'}$ is a permutation one, there is an isomorphism $\U(Y')\oplus \text{Div}_{\overline{Z}}\overline{Y'} \simeq \U(Y)$. By hypothesis, $\U(Y)$ is a stably permutation lattice, thus $\U(Y')$ is, too. When we apply the exact sequence of Sansuc to the $P$-torsor $Y'\longrightarrow X_c$, we obtain:
\begin{equation*}
        \begin{tikzpicture}[baseline= (a).base]
        \node[scale=1] (a) at (0,0){
        \begin{tikzcd}
        0 \arrow[r] & \U (Y') \arrow[r] & \widehat P \arrow[r] & \pico {X_c} \arrow[r] & 0. 
       \end{tikzcd}};
        \end{tikzpicture}
    \end{equation*}
By adding an appropriate permutation module to both $\U (Y')$ and $\widehat P$, we establish the desired claim.
\end{proof}

In Section \ref{special}, we introduced the evaluation map induced by a flasque quasi-resolution. In that context, we demonstrated the well-defined nature of this map, meaning that it does not rely on the particular choice of the flasque quasi-resolution. Now, we seek to establish a similar result, when the algebraic $k$-variety $X$ does not satisfy the condition $\U (X)=0$. To achieve this, we present a result concerning the `versal' property of flasque quasi-resolutions, mirroring the spirit of Proposition \ref{versal}.

Consider two flasque quasi-resolutions $Y_1\overset{S_1}{\longrightarrow}X$ and $Y_2\overset{S_2}{\longrightarrow}$ of $(X,x)$. We take the following cartesian diagram:
\begin{equation*}
        \begin{tikzpicture}[baseline= (a).base]
        \node[scale=1] (a) at (0,0){
        \begin{tikzcd}
        Y_2 \times_X Y_1 \arrow[r, "S_2"]\arrow[d, "S_1"] & Y_1 \arrow[d, "S_1"]\\
        Y_2 \arrow[r, "S_2"] & X.
       \end{tikzcd}};
        \end{tikzpicture}
\end{equation*}
By construction, the set $(Y_2\times_X Y_1)(k)$ is non-empty. Therefore, by applying Proposition \ref{concentrato}, we obtain an isomorphism $ Y_2\times_X Y_1\simeq Y_2\times_k S_1$. This allows us to define the following composition map:
\begin{equation*}
        \begin{tikzpicture}[baseline= (a).base]
        \node[scale=1] (a) at (0,0){
        \begin{tikzcd}
        \widehat {S_1} \arrow[r] & \U (Y_2)\oplus \widehat {S_1} \arrow[r, "\sim"] & \U (Y_2\times_X Y_1) \arrow[r] & \widehat {S_2}.
       \end{tikzcd}};
        \end{tikzpicture}
\end{equation*}
Here, the first morphism is the canonical inclusion, and the last one is the connecting map of the exact sequence of Sansuc applied to the $S_2$-torsor $Y_2\times_X Y_1 \longrightarrow Y_1$. We denote the dual map of tori by $\chi: S_2\rightarrow S_1$. 

\begin{oss}
Even if $\chi$ depends on the isomorphism $ Y_2 \times_X Y_1\simeq Y_2\times_k S_1$, the map on cohomology $\chi_*:\Hh^1(k,S_2)\rightarrow \Hh^1(k,S_1)$ does not depend from the isomorphism, as we have seen in Remark \ref{sonostanco}. Furthermore, the morphism $\chi_*$ is bijective.
\end{oss}

The following lemma provides a more concrete interpretation of the morphism $\chi$:

\begin{lemma}
\label{forsenondovreiseparare}
We fix a point $(y_2, y_1)$ in $(Y_2\times_X Y_1)(k)$. Let $S_2 \rightarrow Y_2 \times_X Y_1$ be the embedding $s_2\rightarrow (y_2,y_1)\cdot s_2$. The composition map
\begin{equation*}
        \begin{tikzpicture}[baseline= (a).base]
        \node[scale=1] (a) at (0,0){
        \begin{tikzcd}
        S_2 \arrow[r] & Y_2\times_X Y_1 \arrow[r, "\sim"] & Y_2\times_k S_1 \arrow[r] & S_1
       \end{tikzcd}};
        \end{tikzpicture}
\end{equation*}
can be expressed as $m_{(y_2, y_1)}\circ \chi$ , where $m_{(y_2,y_1)}$ represents the multiplication by an element of $S_1(k)$.
\end{lemma}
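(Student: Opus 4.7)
The plan is to reduce the statement to a comparison of maps of character lattices, using the following general fact about tori: any $k$-morphism $\Psi: S_2 \to S_1$ between $k$-tori factors uniquely as a group homomorphism composed with a translation by a $k$-rational point. Indeed, $k[S_i]^* = k^* \times \widehat{S_i}$, so pullback of units modulo constants yields a well-defined $\mathbb{Z}$-linear map $\Psi^* : \widehat{S_1} \to \widehat{S_2}$, and if $\chi': S_2 \to S_1$ is the homomorphism of tori dual to $\Psi^*$, then every character $\eta \in \widehat{S_1}$ pulls back along $\Psi \cdot (\chi')^{-1}$ to a constant, forcing $\Psi \cdot (\chi')^{-1}$ to be a $k$-point $\alpha \in S_1(k)$. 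Hence it suffices to show that the composition $\Psi : S_2 \to Y_2 \times_X Y_1 \xrightarrow{\sim} Y_2 \times_k S_1 \to S_1$ and the homomorphism $\chi$ constructed just before the lemma induce the same map $\widehat{S_1} \to \widehat{S_2}$; the translation element is then automatically $\alpha = \Psi(1_{S_2})$, which is by definition the $S_1$-coordinate of the image of $(y_2, y_1)$ in $Y_2 \times_k S_1$.

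I would then compute $\Psi^*$ stepwise on characters. The projection $Y_2 \times_k S_1 \to S_1$ pulls $\widehat{S_1}$ back to the direct summand $\widehat{S_1}$ of $\U(Y_2 \times_k S_1) = \U(Y_2) \oplus \widehat{S_1}$, namely the canonical inclusion appearing as the first arrow in the construction of $\chi$. Composing with the isomorphism $\U(Y_2) \oplus \widehat{S_1} \xrightarrow{\sim} \U(Y_2 \times_X Y_1)$ induced by the trivialization gives precisely the second arrow in that construction. Therefore the content of the lemma reduces to identifying the final arrow, pullback along $s_2 \mapsto (y_2, y_1) \cdot s_2$, with the Sansuc connecting map $\delta : \U(Y_2 \times_X Y_1) \to \widehat{S_2}$ associated with the $S_2$-torsor $Y_2 \times_X Y_1 \to Y_1$.

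This last identification is the key step, but it is essentially tautological once the torsor structure is unwound. The $S_2$-action on $Y_2 \times_X Y_1$ is translation on the first factor, so the embedding $s_2 \mapsto (y_2 \cdot s_2, y_1)$ is exactly a $k$-rational parametrization of the $S_2$-fiber of $Y_2 \times_X Y_1 \to Y_1$ above $y_1$, with origin $(y_2, y_1)$. By the standard geometric description of the Sansuc connecting map, $\delta(f)$ for $f \in \U(Y_2 \times_X Y_1)$ is obtained by restricting $f$ to this $S_2$-fiber and reading the resulting element of $k[S_2]^*/k^*$ using the chosen rational base point; this coincides with pullback of $f$ along $s_2 \mapsto (y_2 \cdot s_2, y_1)$ modulo constants. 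Hence $\Psi^* = \chi^*$, and the lemma follows. I do not expect a serious obstacle: the only real work is checking that the geometric recipe for $\delta$ matches pullback along the rational parametrization, which is standard once the $S_2$-action on the fiber product has been correctly identified.
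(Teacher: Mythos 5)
Your proposal is correct and follows essentially the same route as the paper's own proof: both reduce the statement to checking that the composition and $\chi$ induce the same map on character lattices, using the identification of the Sansuc map $\U(Y_2\times_X Y_1)\to\widehat{S_2}$ with pullback along the rational parametrization $s_2\mapsto (y_2,y_1)\cdot s_2$ of the fibre over $y_1$ (the paper cites Sansuc's Lemma 6.4 for this), and then conclude via the Rosenlicht-type factorization of a morphism of tori into a group homomorphism followed by a translation by a $k$-point (Proposition \ref{rosigene} in the paper). You merely spell out in detail the two ingredients that the paper invokes by citation.
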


\begin{proof}
By \cite[Lemma 6.4]{MR631309}, the induced map $\U (Y_2\times_X Y_1)\longrightarrow\widehat {S_2}$ corresponds to the connecting map of the exact sequence of Sansuc. Consequently, the map $\widehat {S_1} \longrightarrow \widehat {S_2}$ is the dual of $\chi$. By Proposition \ref{rosigene}, we can express the composition map as ${m_{(y_2,y_1)}}\circ \chi$.
\end{proof}

Now, we can establish a stronger uniqueness result about flasque quasi-resolutions:

\begin{prop}
\label{unicitaforte}
Assuming that $Y_2(k)$ is dense in $Y_2$, the flasque quasi-resolution $Y_1\overset{S_1}{\longrightarrow}X$ is, up to a sign, the pushforward of $Y_2\overset{S_2}{\longrightarrow}X$ by $\chi$. 
\end{prop}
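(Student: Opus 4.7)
The plan is to build an explicit isomorphism between $Y_1$ and the contracted product of $Y_2\overset{S_2}{\longrightarrow}X$ with $S_1$ along $\chi$ (up to a sign convention). Fix a $k$-point $(y_2,y_1)\in(Y_2\times_X Y_1)(k)$ and normalise the isomorphism $\phi\colon Y_2\times_X Y_1\xrightarrow{\sim}Y_2\times_k S_1$ of $S_1$-torsors over $Y_2$ so that $\phi(y_2,y_1)=(y_2,1)$. Define $\psi\colon Y_2\to Y_1$ by $\psi(y)=\mathrm{pr}_{Y_1}(\phi^{-1}(y,1))$ and extend it $S_1$-equivariantly to $\tilde\psi\colon Y_2\times_k S_1\to Y_1$, $(y,s_1)\mapsto s_1\cdot\psi(y)$. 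The goal is to show that $\tilde\psi$ is constant on the orbits of a pushforward-type $S_2$-action on $Y_2\times_k S_1$ and induces an isomorphism $(Y_2\times_k S_1)/S_2\xrightarrow{\sim} Y_1$ of $S_1$-torsors over $X$.

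Transporting the $S_2$-action on $Y_2\times_X Y_1$ (which acts only on the $Y_2$-factor) through $\phi$ produces an action $s_2\cdot(y,s_1)=(y\cdot s_2,\,g(s_2,y)\,s_1)$ on $Y_2\times_k S_1$, where $g\colon S_2\times_k Y_2\to S_1$ is a morphism of $k$-schemes; the linearity in $s_1$ is forced by the $S_1$-equivariance of $\phi$. The crucial step is to prove that $g=\chi\circ\mathrm{pr}_{S_2}$. For each $y\in Y_2(k)$ the fibre $\{y\}\times_X Y_1$ corresponds via $\phi$ to the trivial $S_1$-torsor $\{y\}\times_k S_1$, and therefore contains a $k$-point $y_1'$. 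Applying Lemma~\ref{forsenondovreiseparare} to the base point $(y,y_1')$ yields $g(s_2,y)=\chi(s_2)$, because the multiplicative constant $m_{(y,y_1')}$ produced by the lemma cancels in the computation of $g$ as a ratio of the form $\phi_{y\cdot s_2}(y_1')\,\phi_y(y_1')^{-1}$. Consequently $g$ and $\chi\circ\mathrm{pr}_{S_2}$ coincide on $S_2\times_k Y_2(k)$; by the density of $Y_2(k)$ in $Y_2$ and the separatedness of $S_1$, they agree as morphisms of $k$-schemes.

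Once $g=\chi\circ\mathrm{pr}_{S_2}$ is established, a direct computation using the $S_1$- and $S_2$-equivariances of $\phi$ gives the intertwining relation $\psi(y\cdot s_2)=\chi(s_2)^{-1}\psi(y)$, whence $\tilde\psi(y\cdot s_2,\chi(s_2)\,s_1)=\tilde\psi(y,s_1)$. Hence $\tilde\psi$ descends to a morphism of $S_1$-torsors over $X$ from $(Y_2\times_k S_1)/S_2$ to $Y_1$, where $S_2$ acts by $s_2\cdot(y,s_1)=(y\cdot s_2,\chi(s_2)\,s_1)$; dimension and torsor considerations make it an isomorphism. This quotient is precisely the contracted product along $s_2\mapsto\chi(s_2)^{-1}$, i.e. $-\chi$ in character-lattice terms, which accounts for the ``up to a sign'' in the statement depending on one's convention for the pushforward of a torsor. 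The main obstacle is the density step: we use $Y_2(k)$ dense in $Y_2$ precisely to pass from the pointwise equality $g(\cdot,y)=\chi$, valid for $y\in Y_2(k)$ thanks to Lemma~\ref{forsenondovreiseparare}, to the scheme-theoretic equality $g=\chi\circ\mathrm{pr}_{S_2}$; without this hypothesis we would only control $g$ on rational points of $Y_2$.
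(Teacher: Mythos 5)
Your proof is correct and follows essentially the same route as the paper's: both transport the $S_2$-action across the isomorphism $Y_2\times_X Y_1\simeq Y_2\times_k S_1$, identify the resulting twisting with $\chi$ up to the antipode by applying Lemma \ref{forsenondovreiseparare} (with the multiplicative constant cancelling), and use the density of $Y_2(k)$ to upgrade the rational-point computation to a scheme-theoretic identity. The only cosmetic difference is that you build the descent map $\tilde\psi$ to $Y_1$ explicitly, whereas the paper identifies the quotient of $Y_2\times_X Y_1$ by $S_2$ with $Y_1$ via the geometric-quotient criterion.
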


\begin{proof}
We note that the pushforward of $Y_2\overset{S_2}{\longrightarrow}X$ by $\chi$ is the quotient of $Y_2\times_k S_1$ by the following right action of $S_2$:
\begin{equation*}
        \begin{tikzpicture}[baseline= (a).base]
        \node[scale=1] (a) at (0,0){
        \begin{tikzcd}
        ((y_2,\ s_1),\ s_2)\arrow[r] & (y_2\cdot s_2,\ \chi ({s_2})^{-1}s_1),
       \end{tikzcd}};
        \end{tikzpicture}
\end{equation*}
where the formula is on $\overline{k}$-points. It is sufficient to show that the two actions of $S_2$ on $Y_2\times_k S_1\simeq Y_2\times_X Y_1$ coincide, up to a sign. In fact, by \cite[Proposition 0.2]{GIT}, the quotient of $Y_2\times_X Y_1$ by the action of $S_2$ is $Y_1$. According to our hypothesis, the $k$-points in $Y_2\times_k S_1\times_k S_2$ are dense. Therefore, it is sufficient to prove that the two morphisms from $Y_2\times_k S_1\times_k S_2$ to $Y_2\times_k S_1$ coincide on $k$-points. We can describe the action of $S_2$ on $Y_2\times_X Y_1$ in terms of $Y_2\times_k S_1$. A point in $(Y_2\times_X Y_1)(k)$ can be presented as $(y_2\cdot s_2',\ y_1)$ for some $y_2 \in Y_2(k)$, $y_1\in Y_1(k)$ and $s_2'\in S_2(k)$. According to Lemma \ref{forsenondovreiseparare}, the image of $(y_2\cdot s_2',\ y_1)$ in $(Y_2\times_k S_1)(k)$ is $(y_2\cdot s_2',\ (m_{(y_2,y_1)}\circ \chi) (s_2'))$. The action of $S_2$ can be written in the following way:
    \begin{equation*}
        \begin{tikzpicture}[baseline= (a).base]
        \node[scale=1] (a) at (0,0){
        \begin{tikzcd}
        ((y_2\cdot s_2',\ (m_{(y_2,y_1)}\circ \chi) (s_2')),\ s_2)\arrow[r] & (y_2\cdot (s_2's_2),\ (m_{(y_2,y_1)}\circ \chi) (s_2's_2)).
       \end{tikzcd}};
        \end{tikzpicture}
    \end{equation*}
There are the following identities:
$$(m_{(y_2,y_1)}\circ \chi) (s_2's_2)=m_{(y_2,y_1)}(\chi (s_2')\ \chi(s_2))=\chi(s_2)\ m_{(y_2,y_1)}(\chi (s_2')).$$
Therefore, after composing $\chi$ with the antipodal map, the two actions coincide.     
\end{proof}

\begin{oss}
We observe that the assumption on the density of $Y_2(k)$ is satisfied when $Y_2$ is $k$-unirational. This condition can be checked on $X$, see for example the first item of Proposition \ref{sistemaresult}. In the case in which $X$ is a homogeneous space, then such a condition is satisfied. 
\end{oss}

\begin{oss}
There is a similar result for coflasque quasi-resolutions as well.
\end{oss}

\begin{esempio}
Suppose that $X$ is $\mathbb G_m$-quasi-trivial and that $X(k)$ is dense in $X$. Consider two quasi-trivial tori, denoted as $P_1$ and $P_2$. The trivial torsors $X\times_k P_1\longrightarrow X$ and $X\times_k P_2 \longrightarrow X$ are flasque quasi-resolutions. The following diagram is cartesian:
\begin{equation*}
        \begin{tikzpicture}[baseline= (a).base]
        \node[scale=1] (a) at (0,0){
        \begin{tikzcd}
        X\times_k P_1 \times_k P_2 \arrow[r]\arrow[d] & X\times_k P_2 \arrow[d]\\
        X\times_k P_1 \arrow[r] & X. 
       \end{tikzcd}};
        \end{tikzpicture}
    \end{equation*}
In this case we can take the zero map as $\chi$.
\end{esempio}

We can finally prove that, under mild assumptions, the evaluation map is independent of the choice of base point and the flasque quasi-resolution.

\begin{prop}
\label{nonsapreiproprio}
Let $x_1$ and $x_2$ be points in $X(k)$. Let $Y_1\overset{S_1}{\longrightarrow}X$ and $Y_2\overset{S_2}{\longrightarrow}X$ be flasque quasi-resolutions of $(X,x_1)$ and $(X,x_2)$, respectively. If $Y_1(k)$ and $Y_2(k)$ are dense in $Y_1$ and $Y_2$, then there is the following commutative diagram:
\begin{equation*}
        \begin{tikzpicture}[baseline= (a).base]
        \node[scale=1] (a) at (0,0){
        \begin{tikzcd}
        & \emph{H}^1(k,S_1)\arrow[dd, "\sim"]\\
        X(k)\arrow[rd, "\eve_2"']\arrow[ru, "\eve_1"]\\
        & \emph{H}^1(k,S_2).
       \end{tikzcd}};
        \end{tikzpicture}
\end{equation*}
\end{prop}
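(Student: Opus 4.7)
The plan is to mimic the approach of Proposition~\ref{diagrammabello1}: first reduce to the common-base-point case by a twisting construction, then apply Proposition~\ref{unicitaforte}. Let $\sigma := -[Y_{1,x_2}] \in \mathrm H^1(k, S_1)$ and consider the $S_1$-torsor $Y_1' \to X$ whose class in $\mathrm H^1(X, S_1)$ is the sum of $[Y_1]$ and the pull-back of $\sigma$ along the structural morphism $X\to \spec k$. Then the fibre of $Y_1'$ over $x_2$ is trivial by construction, and $\ev_1'(p) = \ev_1(p) - [Y_{1, x_2}]$ for every $p \in X(k)$.

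\textbf{Preservation of $\mathbb G_m$-quasi-triviality.} The main technical step is verifying that $Y_1'$ is still $\mathbb G_m$-quasi-trivial, so that it qualifies as a flasque quasi-resolution of $(X, x_2)$. Since the twist becomes trivial over $\overline k$, there is a canonical $\overline{S_1}$-equivariant isomorphism $\overline{Y_1'} \simeq \overline{Y_1}$ of schemes over $\overline X$; in particular $\pico{Y_1'} = \pico{Y_1} = 0$. A direct cocycle computation shows that, under this identification, the twisted and natural $\Gamma_k$-actions on $\overline k[\overline{Y_1}]^*$ differ, on any function transforming by a character $\chi \in \widehat{S_1}$, by the $\overline k^*$-valued factor $\gamma \mapsto \gamma(\chi(\sigma_{\gamma^{-1}}))$ (where $\sigma_\gamma$ is a cocycle representing $\sigma$). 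Hence the two actions coincide after quotienting by $\overline k^*$, and $\U(Y_1') = \U(Y_1)$ as $\Gamma_k$-modules, which remains a permutation lattice.

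\textbf{Conclusion.} Now $Y_1' \overset{S_1}{\longrightarrow} X$ and $Y_2 \overset{S_2}{\longrightarrow} X$ are both flasque quasi-resolutions of $(X, x_2)$, and $Y_2(k)$ is dense in $Y_2$ by hypothesis. Proposition~\ref{unicitaforte} supplies a morphism of tori $\chi \colon S_2 \to S_1$ such that $Y_1'$ is, up to composition of $\chi$ with the antipodal map, the pushforward of $Y_2$ along $\chi$. Taking fibres over $p \in X(k)$ gives $\ev_1'(p) = \pm \chi_*(\ev_2(p))$ in $\mathrm H^1(k, S_1)$; by Remark~\ref{sonostanco} the induced map $\chi_* \colon \mathrm H^1(k, S_2) \to \mathrm H^1(k, S_1)$ is a bijection coinciding with the isomorphism of Proposition~\ref{due}. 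Combining the two identities yields $\ev_1(p) = \pm \chi_*(\ev_2(p)) + [Y_{1, x_2}]$, so the set-theoretic bijection $\psi \colon \xi \mapsto \pm \chi_*^{-1}(\xi - [Y_{1, x_2}])$ from $\mathrm H^1(k, S_1)$ to $\mathrm H^1(k, S_2)$ satisfies $\psi \circ \ev_1 = \ev_2$, providing the desired commutative diagram. The main obstacle is the preservation of $\mathbb G_m$-quasi-triviality under twisting; without this geometric fact, $Y_1'$ could not be recognised as a flasque quasi-resolution, and Proposition~\ref{unicitaforte} would be inapplicable.
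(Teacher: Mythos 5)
Your proof is correct, but it takes a genuinely different route from the paper's. The paper reduces everything to a smooth compactification $X_c$: it invokes the complete classification of flasque quasi-resolutions in the proper case (universal torsors and Propositions \ref{corresp}, \ref{triangolo}, \ref{diagrammabello1}) to handle the change of base point on $X_c$, and then transports the result back to $X$ by restricting the resolutions of $(X_c,x_i)$ and comparing them with the given $Y_i$ via Proposition \ref{unicitaforte} (this is where both density hypotheses enter). You instead stay on $X$ throughout: you change the base point by twisting $Y_1$ by the constant class $-[Y_{1,x_2}]$, check directly that the twist is still a flasque quasi-resolution, and then apply Proposition \ref{unicitaforte} a single time, using only the density of $Y_2(k)$. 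The one genuinely new ingredient you need is that twisting by a constant class preserves $\mathbb G_m$-quasi-triviality; the paper only establishes the analogous twisting statement (Proposition \ref{corresp}) under the hypothesis $\U(X)=0$, via the type characterization of Lemma \ref{nonso}, which is why it detours through $X_c$. Your cocycle argument for $\U(Y_1')\simeq\U(Y_1)$ is right but terse: the point that makes it work is that, by Rosenlicht's lemma applied to the action map $\overline{Y_1}\times_{\overline k}\overline{S_1}\to\overline{Y_1}$, \emph{every} unit on $\overline{Y_1}$ is an eigenfunction for the $S_1$-action, so right translation by $\sigma_\gamma$ multiplies it by a scalar and the twisted and untwisted Galois actions agree modulo $\overline k^*$; you should state this explicitly, and likewise note that $\pico{Y_1'}$ vanishes because it is isomorphic to $\pico{Y_1}$ as an abelian group. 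With that spelled out, your argument is shorter than the paper's, avoids the compactification entirely, and in fact proves the slightly stronger statement requiring density of only one of $Y_1(k)$, $Y_2(k)$; what the paper's route buys in exchange is that it reuses machinery already established for the proper case rather than proving a new twisting lemma.
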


\begin{proof}
We fix a smooth compactification $X_c$ of $X$. Consider two flasque quasi-resolutions $Y_3 \overset{S_3}{\longrightarrow} X_c$ and $Y_4 \overset{S_4}{\longrightarrow} X_c$ of $(X_c,x_1)$ and $(X_c,x_2)$, respectively. According to Proposition \ref{diagrammabello1}, there is the following commutative diagram:
\begin{equation*}
        \begin{tikzpicture}[baseline= (a).base]
        \node[scale=1] (a) at (0,0){
        \begin{tikzcd}
        & \text{H}^1(k,S_3)\arrow[dd, "\sim"]\\
        X_c(k)\arrow[rd, "\ev_4"']\arrow[ru, "\ev_3"]\\
        & \text{H}^1(k,S_4).
       \end{tikzcd}};
        \end{tikzpicture}
\end{equation*}
We denote by $Y_3'\overset{S_3}{\longrightarrow} X$, the restriction of $Y_3\overset{S_3}{\longrightarrow} X_c$ to $X$. This is a flasque quasi-resolution of $(X,x_1)$ by Proposition \ref{colliotprati}, and there is the following commutative diagram:
\begin{equation*}
        \begin{tikzpicture}[baseline= (a).base]
        \node[scale=1] (a) at (0,0){
        \begin{tikzcd}
        & X_c(k)\arrow[dd, "\ev_3"]\\
        X(k)\arrow[rd, "\ev_3'"']\arrow[ru]\\
        & \text{H}^1(k,S_3).
       \end{tikzcd}};
        \end{tikzpicture}
\end{equation*}
The same applies to the torsor $Y_4\overset{S_4}{\longrightarrow} X_c$. By Proposition \ref{unicitaforte}, there exists a group morphism $\psi: S_1\rightarrow S_3$ such that $Y_3'\overset{S_3}{\longrightarrow} X$ is the pushforward of $Y_1\overset{S_1}{\longrightarrow} X$ by $\psi$. Then, by functoriality, there is the following commutative diagram: 
\begin{equation*}
        \begin{tikzpicture}[baseline= (a).base]
        \node[scale=1] (a) at (0,0){
        \begin{tikzcd}
        & \text{H}^1(k,S_1)\arrow[dd, "\sim"]\\
        X(k)\arrow[rd, "{\ev}_3'"']\arrow[ru, "\ev_1"]\\
        & \text{H}^1(k,{S}_3).
       \end{tikzcd}};
        \end{tikzpicture}
\end{equation*}
By following a similar procedure with the flasque quasi-resolutions of $(X, x_2)$, we obtain the final statement by combining all the commutative diagrams.
\end{proof}

\begin{oss}
The previous proposition applies to homogeneous spaces. 
\end{oss}

\section{Evaluation map for homogeneous spaces}
\label{homoge}

In this section, we apply the constructions obtained earlier to study the equivalence classes of a homogeneous space. We will see how the evaluation map can be used for this purpose. At the end, we will prove a slightly stronger version of a theorem of Colliot-Th\'{e}l\`{e}ne and Kunyavski\u{\i}. 

\subsection{R-equivalence on homogeneous spaces}

The notion of $R$-equivalence has been introduced by Manin, in \cite{MR833513}, for the investigation of cubic hypersurfaces. It serves as a measure, in some sense, of how close an algebraic variety is to being rational. Unfortunately, the explicit calculation of this equivalence relation has proven to be quite challenging. For instance, in his book, Manin successfully computed it only for Ch$\hat {\text{a}}$telet surfaces. 

In this section we are going to recall some well-known properties of $R$-equivalence and prove some new ones for homogeneous spaces. We refer mostly to \cite[Chapter 6]{MR1634406}.

\begin{Def}
\label{requivalenza}
Let $X$ be an algebraic $k$-variety, with $X(k)$ non-empty. Let $x,y \in X(k)$. We say that $x$ and $y$ are strictly $R$-equivalent, and we write $x\underset{SR}{\sim}y $, if there is a morphism
\begin{equation*}
        \begin{tikzpicture}[baseline= (a).base]
        \node[scale=1] (a) at (0,0){
        \begin{tikzcd}
       U\arrow[r,"\phi"] & X, 
       \end{tikzcd}};
        \end{tikzpicture}
\end{equation*}
such that $U$ is an open subset of $\mathbb A^1_k$ and $x,y\in \phi\left( U(k)\right)$. If there exists a sequence of points $x_0=x$, $x_1$, ... , $x_{n+1}=y$ in $X(k)$ such that for all $i=0,\cdots, n$ the points $x_i$ and $x_{i+1}$ are strictly $R$-equivalent, then we say that $x$ and $y$ are $R$-equivalent and we write $x\underset{R}{\sim}y $. We denote the set of $R$-equivalence classes by $X(k)/R$. 
\end{Def}

\begin{oss}
There is also a more functorial definition of $R$-equivalence. See \cite[Definition 1.6.1]{MR3972198} for more details.
\end{oss}

The following proposition follows directly from the definition:

\begin{prop}
Let $X$ and $Y$ be $k$-algebraic varieties. 
\begin{itemize}
    \item Any $k$-morphism $X\rightarrow Y$ induces a map $X(k)/R\rightarrow Y(k)/R$.
    \item There is a canonical bijection $(X\times_k Y)(k)/R=X(k)/R \times Y(k)/R$.
    \item Let $l\subset k$ be a finite separable extension. If $R_{k/l}(X)$ exists as an algebraic $l$-variety, then there is a canonical bijection $R_{k/l}(X)(l)/R=X(k)/R$.
\end{itemize}
\end{prop}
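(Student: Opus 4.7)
The plan is to prove each of the three items separately, all being essentially direct consequences of Definition~\ref{requivalenza}.

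For the first item, if $f: X \to Y$ is a $k$-morphism and $x_1, x_2 \in X(k)$ are strictly $R$-equivalent via $\phi: U \to X$ with $\phi(u_i) = x_i$, then $f \circ \phi: U \to Y$ exhibits $f(x_1)$ and $f(x_2)$ as strictly $R$-equivalent. Concatenating along chains of strict $R$-equivalences yields the induced map $X(k)/R \to Y(k)/R$.

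For the second item, the two projections produce, via the first item, a map $(X \times_k Y)(k)/R \to X(k)/R \times Y(k)/R$; surjectivity follows from the identification $(X \times_k Y)(k) = X(k) \times Y(k)$. For injectivity, given $(x_1, y_1), (x_2, y_2) \in (X \times_k Y)(k)$ with $x_1 \underset{R}{\sim} x_2$ and $y_1 \underset{R}{\sim} y_2$, I would first show that $(x_1, y_1) \underset{R}{\sim} (x_2, y_1)$ by observing that each link $\phi_i: U_i \to X$ of a chain from $x_1$ to $x_2$ gives a morphism $(\phi_i, y_1): U_i \to X \times_k Y$ realizing a strict $R$-equivalence that keeps the second coordinate fixed. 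A symmetric argument, varying the $Y$-coordinate, then produces $(x_2, y_1) \underset{R}{\sim} (x_2, y_2)$, and transitivity concludes.

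The third item requires the most care. The canonical bijection $R_{k/l}(X)(l) \simeq X(k)$ is the universal property of Weil restriction; the task is to show that it descends to $R$-equivalence classes. One direction is easy: any $\phi: V \to R_{k/l}(X)$ with $V \subset \mathbb{A}^1_l$ open corresponds by adjunction to a morphism $V \times_l k \to X$, with $V \times_l k$ open in $\mathbb{A}^1_k$, transporting strict $R$-equivalences on $R_{k/l}(X)(l)$ to strict $R$-equivalences on $X(k)$. The main obstacle is the reverse direction: given $\phi: U \to X$ with $U \subset \mathbb{A}^1_k$ open and $\phi(u_i) = x_i$, one cannot apply Weil restriction naively, since $R_{k/l}(\mathbb{A}^1_k) \simeq \mathbb{A}^n_l$ with $n = [k:l]$ has dimension greater than one. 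The fix is to pass through an $l$-rational line: take $L \subset \mathbb{A}^n_l$ to be the $l$-affine line through the two $l$-points corresponding to $u_1, u_2$, intersect it with the open subvariety $R_{k/l}(U) \subset \mathbb{A}^n_l$ (which is open since Weil restriction preserves open immersions), and restrict the morphism $R_{k/l}(U) \to R_{k/l}(X)$ induced by $\phi$ to this open subset of $L \simeq \mathbb{A}^1_l$. By construction the two $l$-points lie in $L \cap R_{k/l}(U)$, so the resulting curve witnesses the required strict $R$-equivalence in $R_{k/l}(X)(l)$.
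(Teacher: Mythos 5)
Your proof is correct; the paper itself gives no argument, merely asserting that the proposition follows directly from the definition (with a general reference to Voskresenski\u{\i}'s book), and your write-up is a faithful elaboration of exactly that. The only point requiring a genuine idea is the compatibility with Weil restriction, and your slicing of the open subset $R_{k/l}(U)\subset \mathbb{A}^{[k:l]}_l$ by the $l$-rational affine line through the two relevant $l$-points is the standard and correct way to reduce back to a one-dimensional parameter space.
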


\begin{oss}
We can deduce that for all quasi-trivial tori $T$, the group $T(k)/R$ is trivial.
\end{oss}

$R$-equivalence is a birational invariant for homogeneous spaces. Indeed, we can prove the following result, that is a slight generalisation of \cite[Proposition 11]{requiv}: 

\begin{prop}
\label{birahom}
Let $X$ be a homogeneous space under a connected linear $k$-algebraic group $G$. Let $U$ be a non-empty open subset of $X$. If $X(k)$ is non-empty, then $U(k)$ is non-empty and the canonical map
$$U(k)/R\longrightarrow X(k)/R$$
is a bijection.
\end{prop}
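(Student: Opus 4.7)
The plan is to adapt Colliot-Th\'el\`ene and Sansuc's proof for tori (\cite[Proposition 11]{requiv}) to the homogeneous-space setting, exploiting the transitivity of the $G$-action on $X$ together with the unirationality of connected linear algebraic groups in characteristic zero.

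First, to see $U(k)\neq\emptyset$, fix $x\in X(k)$ and consider the orbit morphism $\pi_x:G\to X$, $g\mapsto g\cdot x$. Since $X$ is homogeneous under $G$, $\pi_x$ is surjective, so $\pi_x^{-1}(U)$ is a non-empty open subscheme of $G$. Because $G$ is a connected linear algebraic $k$-group in characteristic zero, $G$ is unirational over $k$, and in particular $G(k)$ is Zariski dense in $G$. Hence there exists $g_0\in\pi_x^{-1}(U)(k)$, and $g_0\cdot x\in U(k)$.

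For the surjectivity of $U(k)/R\to X(k)/R$, fix $x\in X(k)$. By unirationality, after translation there is a dominant $k$-rational map $f:\mathbb{A}^n_k\dashrightarrow G$ with $f(0)=1_G$. The composition $h:=\pi_x\circ f:\mathbb{A}^n_k\dashrightarrow X$ is then dominant with $h(0)=x$, so $h^{-1}(U)$ is a non-empty open subset of the domain of $h$; pick $d\in h^{-1}(U)(k)$. The line $t\mapsto t\cdot d$ from $\mathbb{A}^1_k$ to $\mathbb{A}^n_k$, pulled back to the domain of $h$, yields a non-empty open $W\subseteq\mathbb{A}^1_k$ containing both $0$ and $1$, and the induced rational curve $W\to X$ sends $0\mapsto x$ and $1\mapsto h(d)\in U(k)$, establishing $x\underset{SR}{\sim}h(d)$ in $X$.

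The injectivity of $U(k)/R\to X(k)/R$ is the most delicate step. For a single strict $R$-equivalence $x\underset{SR}{\sim}y$ in $X$ via $\psi:V\to X$ with $x,y\in U(k)$, the open subset $\psi^{-1}(U)\subseteq V$ contains the $k$-preimages of $x$ and $y$, so restricting $\psi$ to $\psi^{-1}(U)$ directly realizes $x\underset{SR}{\sim}y$ in $U$. For a general chain $x=x_0,x_1,\dots,x_{n+1}=y$ whose intermediate points may lie outside $U$, the plan is to modify the chain inductively by replacing each intermediate $x_i$ with a $k$-point $x_i'\in U(k)$ obtained as the image under $\psi_{i-1}$ of a $k$-point in the non-empty open set $\psi_{i-1}^{-1}(U)\subseteq V_{i-1}$. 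The main obstacle is making this replacement compatible with the adjacent link $\psi_i$: here the homogeneity of $X$ under $G$ should be essential, since one can use translations by elements of $G(k)$ (chosen via the density of $G(k)$ in $G$, so that finitely many open conditions can be met simultaneously) to produce new rational curves connecting $x_i'$ to $x_{i+1}$ inside $U$ and thereby piece together a chain in $U(k)$ from $x$ to $y$.
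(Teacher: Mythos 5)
Your first two steps (non-emptiness of $U(k)$ and surjectivity of $U(k)/R\to X(k)/R$) are correct and follow the same line as the paper: parametrize $G$ by a rational variety with the identity in the image of a rational point, use density of the resulting $k$-points, and connect $x$ to a point of $U$ by a rational curve through the orbit map. The single-link observation for injectivity (restricting $\psi$ to $\psi^{-1}(U)$ when both endpoints already lie in $U$) is also correct and is exactly the mechanism the paper uses.

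However, the injectivity argument for a general chain is a genuine gap: you state a ``plan'' and name the ``main obstacle'' without resolving it, and the sketched repair does not work as described. Replacing an intermediate node $x_i\notin U(k)$ by a point $x_i'\in U(k)$ on the curve $\psi_{i-1}$ breaks the next link, and the proposed fix by translation fails for two reasons: first, $G(k)$ need not act transitively on $X(k)$ (only $G(\overline k)$ acts transitively on $X(\overline k)$), so there may be no $g\in G(k)$ with $g\cdot x_i=x_i'$; second, even if such a $g$ existed, translating $\psi_i$ by $g$ moves its other endpoint to $g\cdot x_{i+1}$, which is neither $x_{i+1}$ nor its replacement, so the chain does not reconnect. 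The paper's resolution is a single \emph{global} translation rather than an inductive local repair: writing $\phi_i(g)=g\cdot x_i$, the set $V=\bigcap_{i}\phi_i^{-1}(U)$ is a non-empty open subset of the irreducible variety $G$, and since the image of $W(k)$ in $G$ is dense one can pick one $g$ there; then every node $g\cdot x_i$ lies in $U$ simultaneously, each translated link $g\cdot\psi_i$ restricts to $\left(g\cdot\psi_i\right)^{-1}(U)$ to give a strict $R$-equivalence inside $U$, and the endpoints are handled by $x\underset{SR}{\sim}g\cdot x$ and $g\cdot y\underset{SR}{\sim}y$ in $U$, using a rational curve in $G$ joining the identity to $g$ (both lie in the image of $W(k)$, an open of $\mathbb P^n_k$) composed with the orbit maps of $x$ and $y$. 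You should replace your inductive scheme with this simultaneous translation to close the argument.
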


\begin{proof}
Since the field $k$ has characteristic zero, there is a dominant morphism $W \rightarrow G$, where $W$ is an open subset of $\mathbb P^n_k$. Up to multiplication by an element of $G(k)$, we can suppose that the identity of $G(k)$ is the image of a rational point of $W$.
\begin{itemize}
    \item The map is surjective. Let $x\in X(k)$. We have to prove that the $R$-equivalence class of $x$ intersects $U$. Let $G\rightarrow X$ be the surjective morphism $g\rightarrow g\cdot x$. The image of $W(k)$ in $X$ is a dense subset that is contained in the $R$-equivalence class of $x$. That proves the claim.
    \item The map is injective. Let $x, y \in U(k)$. We assume the existence of a chain of elements $x=x_0, x_1, \cdots , x_n, x_{n+1}=y$ in $X(k)$ such that $x_i$ and $x_{i+1}$ are strictly $R$-equivalent for all $i=0,\cdots, n$. For all $i=0,\cdots, n+1$ we can define the morphism $\phi_i:G\rightarrow X$ that is $g\rightarrow g\cdot x_i$. Let $V$ be the intersection of all open subset $\phi_i^{-1}(U)$. The image of $W(k)$ in $G$ is dense, in particular it intersects $V$, since it is non-empty. Let $g$ be an element in the intersection. By construction, there is a chain of elements of $U$: $x\underset{SR}{\sim} g\cdot x \underset{SR}{\sim} g \cdot x_1 \underset{SR}{\sim} \cdots \underset{SR}{\sim} g\cdot x_n \underset{SR}{\sim} g \cdot y \underset{SR}{\sim} y $.
    \end{itemize}
\end{proof}

There is the following corollary:

\begin{cor}
\label{cormoving}
Let $X$ and $Y$ be homogeneous spaces under connected algebraic $k$-groups, with $X(k)$ and $Y(k)$ non-empty. If $X$ and $Y$ are birationally equivalent, then there is a bijective map between $X(k)/R$ and $Y(k)/R$.   
\end{cor}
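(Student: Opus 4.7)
The corollary follows directly from Proposition \ref{birahom}. The plan is to exploit that $R$-equivalence is a birational invariant on homogeneous spaces once we know a $k$-point exists, and to use a common open subset as a bridge between $X$ and $Y$.

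First I would unpack the birational equivalence hypothesis: there exist non-empty open subsets $U \subset X$ and $V \subset Y$ together with a $k$-isomorphism $U \xrightarrow{\sim} V$. Since $X(k)$ and $Y(k)$ are non-empty by hypothesis, Proposition \ref{birahom} applies to both $(X,U)$ and $(Y,V)$, yielding canonical bijections
\begin{equation*}
U(k)/R \xrightarrow{\sim} X(k)/R, \qquad V(k)/R \xrightarrow{\sim} Y(k)/R.
\end{equation*}
In particular, both $U(k)$ and $V(k)$ are automatically non-empty, so there is no nonemptyness issue to worry about.

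Next, the $k$-isomorphism $U \simeq V$ is in particular a $k$-morphism in both directions, and $R$-equivalence is functorial with respect to $k$-morphisms of algebraic varieties. Hence it induces mutually inverse maps between $U(k)/R$ and $V(k)/R$, giving a bijection $U(k)/R \xrightarrow{\sim} V(k)/R$.

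Finally, I would compose the three bijections
\begin{equation*}
X(k)/R \xleftarrow{\sim} U(k)/R \xrightarrow{\sim} V(k)/R \xrightarrow{\sim} Y(k)/R
\end{equation*}
to obtain the desired bijection between $X(k)/R$ and $Y(k)/R$. There is no real obstacle here; the content has already been absorbed into Proposition \ref{birahom}, and the corollary is just the observation that its conclusion glues along a common open subset witnessing the birational equivalence.
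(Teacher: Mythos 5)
Your argument is correct and is precisely the intended one: the paper states this as an immediate corollary of Proposition \ref{birahom} without writing out a proof, and the standard unpacking of birational equivalence into a common open subset, combined with the functoriality of $R$-equivalence under $k$-morphisms, is exactly what is meant. No gaps.
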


We recall the following definition:

\begin{Def}
Let $Y\longrightarrow X$ be a torsor under a linear algebraic $k$-group. We say that the torsor is generically trivial, if there exists a non-empty open subset $U$ of $X$ such that the restriction to $U$ is isomorphic to the trivial torsor $U\times_k G\longrightarrow U$. 
\end{Def}

In the next proposition, we compute $R$-equivalence in a specific case of homogeneous space:

\begin{prop}
\label{semplice}
Let $X=G/M$ be a homogeneous space under a connected linear algebraic $k$-group $G$. We suppose that $M$ is a group of multiplicative type, and that every $G$-torsor is generically trivial in the Zariski topology. Let 
\begin{equation*}
        \begin{tikzpicture}[baseline= (a).base]
        \node[scale=1] (a) at (0,0){
        \begin{tikzcd}
       1\arrow[r] & M \arrow[r] & S \arrow[r] & E \arrow[r] & 1
       \end{tikzcd}};
        \end{tikzpicture}
\end{equation*}
be a flasque resolution of $M$. Then, there exists a bijection:
$$X(k)/R \simeq \emph{H}^1(k, S)\times G(k)/R .$$
\end{prop}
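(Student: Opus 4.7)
The plan is to construct an intermediate homogeneous space $Y := (G \times S)/M$, with $M$ embedded antidiagonally via $m \mapsto (m, m^{-1})$ using the inclusions $M \hookrightarrow G$ and $M \hookrightarrow S$. Since the $G$- and $S$-actions on $G \times S$ commute, $Y$ carries two compatible torsor structures: the first projection $p \colon Y \to G/M = X$ is an $S$-torsor, and the second projection $q \colon Y \to S/M = E$ is a $G$-torsor. Moreover, $Y$ is itself a homogeneous space under the connected linear algebraic $k$-group $G \times S$ with stabilizer $M$.

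First I would compute $Y(k)/R$. By hypothesis $q$ is a generically trivial $G$-torsor, so there is a non-empty open $V \subset E$ with $q^{-1}(V) \simeq V \times G$. Since $E$ is a quasi-trivial torus, $E(k)$ is Zariski dense, hence $V(k) \neq \emptyset$ and $Y(k) \neq \emptyset$. Applying Proposition \ref{birahom} to $Y$ and to $E \times G$ (both homogeneous spaces under connected linear algebraic $k$-groups), and using that $R$-equivalence respects products, one obtains
\[
Y(k)/R = q^{-1}(V)(k)/R = (V \times G)(k)/R = (E \times G)(k)/R = E(k)/R \times G(k)/R = G(k)/R,
\]
using $E(k)/R = 0$ for the quasi-trivial torus $E$. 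The same argument applies to each twist $Y^\sigma$ with $\sigma \in \mathrm H^1(k, S)$: twisting by $\sigma$ affects only the $S$-torsor structure $p$ and leaves $q$ essentially unchanged (since $\mathrm H^1(k, E) = 0$), so $Y^\sigma \to E$ is again a generically trivial $G$-torsor and $Y^\sigma(k)/R \simeq G(k)/R$.

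Next I would introduce the evaluation map $\mathrm{ev} \colon X(k) \to \mathrm H^1(k, S)$ sending $x$ to $[p^{-1}(x)]$. Surjectivity is immediate: for each $\sigma$, Step~1 produces $y \in Y^\sigma(k)$ with $\mathrm{ev}(p(y)) = \sigma$. That $\mathrm{ev}$ factors through $X(k)/R$ follows from Proposition \ref{concentrato}: if $\phi \colon U \to X$ is a strict $R$-equivalence with $U \subset \mathbb{A}^1_k$ open, then $\phi^* Y \to U$ is an $S$-torsor over the $\mathbb{G}_m$-quasi-trivial variety $U$, hence constant, so $\mathrm{ev}$ is constant on $\phi(U(k))$. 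The goal is then to show that for each $\sigma$ the natural map $Y^\sigma(k)/R \to \mathrm{ev}^{-1}(\sigma) \subset X(k)/R$ is a bijection; combined with Step~1 this yields $X(k)/R \simeq \mathrm H^1(k, S) \times G(k)/R$.

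The main obstacle is the injectivity of this last map. Given $y_1, y_2 \in Y^\sigma(k)$ with $p(y_1) \underset{SR}{\sim} p(y_2)$ via $\phi \colon U \to X$ and lifts $u_1, u_2 \in U(k)$, the pull-back $\phi^* Y^\sigma \to U$ is constant by Proposition \ref{concentrato} and admits a $k$-point (namely $y_1$ above $u_1$), hence is trivial: $\phi^* Y^\sigma \simeq U \times S$. Writing $y_i = (u_i, s_i)$ in this trivialization, the section $u \mapsto (u, s_1)$ composed with $\phi^* Y^\sigma \to Y^\sigma$ realizes $y_1 \underset{SR}{\sim} (u_2, s_1)$ in $Y^\sigma$. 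It remains to prove $(u_2, s_1) \underset{R}{\sim} y_2 = (u_2, s_2)$, which amounts to showing that the $S(k)$-action on $Y^\sigma(k)$ preserves $R$-equivalence classes. For this one uses $q$: under the birational identification $Y^\sigma \sim G \times E$ from Step~1, the $S$-action becomes translation on the $E$-factor, and since $E(k)/R = 0$, any rational curve in $E$ connecting the two $q$-images lifts, via a local trivialization of $q$, to a rational curve in $Y^\sigma$ connecting $(u_2, s_1)$ to $y_2$.
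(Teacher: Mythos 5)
Your construction of $Y=(G\times_k S)/M$ and most of your reductions are sound: $Y^{\sigma}(k)/R\simeq G(k)/R$, the evaluation map is well defined on $X(k)/R$ and surjective, and the problem correctly reduces to showing that the $S(k)$-action on $Y^{\sigma}(k)$ preserves $R$-equivalence classes. The argument you give for that last point, however, does not work. The generic trivialisation $q^{-1}(V)\simeq V\times_k G$ is $G$-equivariant but not $S$-equivariant: on a suitable open set the $S$-action reads $s\cdot(v,g)=(\bar s v,\ g\,h(s,v))$ for some $G$-valued morphism $h$ with $h(1,v)=1$, so the action is \emph{not} ``translation on the $E$-factor''. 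Lifting a rational curve in $E$ from $q(s\cdot y)$ to $q(y)$ connects $s\cdot y$ to \emph{some} point of the fibre $q^{-1}(q(y))$, not to $y$ itself; the discrepancy is the element $h(s,v)\in G(k)$, and you would need $g\sim_R g\,h(s,v)$, i.e.\ $h(s,v)$ to lie in the $R$-trivial part of $G(k)$. Nothing in your argument gives this, and it is exactly where the difficulty sits: $S$ is flasque, so $S(k)/R$ is in general nontrivial and $S(k)$-orbits have no a priori reason to be contained in single $R$-classes. (For $G=\mathrm{GL}_n,\ \mathrm{SL}_n,\ \mathrm{Sp}_{2n}$ one has $G(k)/R=1$ and the issue evaporates, but the factor $G(k)/R$ in the statement is there precisely because the general case matters.)

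The paper avoids this trap by running the same idea through a \emph{coflasque} resolution $1\to M\to P\to Q\to 1$ rather than the flasque one: the intermediate space $(G\times_k P)/M$ is a torsor over $X$ under the quasi-trivial torus $P$, which is $k$-rational with $P(k)/R=1$, so its $P(k)$-orbits \emph{are} automatically contained in $R$-classes; combined with the generic triviality of the $G$-torsor $(G\times_k P)/M\to Q$ and of the $P$-torsor $(G\times_k P)/M\to X$, this yields the birational equivalence $X\times_k P\sim_{\mathrm{bir}}Q\times_k G$, after which Corollary \ref{cormoving} and the Colliot-Th\'el\`ene--Sansuc computation $Q(k)/R\simeq\mathrm H^1(k,S)$ (applied to the flasque resolution $1\to S\to (S\times_k P)/M\to Q\to 1$) finish the proof. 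To salvage your route you would have to prove directly that $h$ takes values in the $R$-trivial elements of $G(k)$, and I do not see how to do that without essentially reintroducing the coflasque resolution.
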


\begin{proof}
Let 
\begin{equation*}
        \begin{tikzpicture}[baseline= (a).base]
        \node[scale=1] (a) at (0,0){
        \begin{tikzcd}
       1\arrow[r] & M \arrow[r] & P \arrow[r] & Q \arrow[r] & 1
       \end{tikzcd}};
        \end{tikzpicture}
\end{equation*}
be a coflasque resolution. We consider the $G$-torsors $(G\times_k P)/M\longrightarrow Q$ and the $P$-torsor $(G\times_k P)/M\longrightarrow X$. Both torsors are generically trivial, leading to the birational isomorphism:
$$X\times_k P\underset{\text{bir}}{\sim}Q\times_k G.$$
By Corollary \ref{cormoving}, $R$-equivalence is a birational invariant for homogeneous spaces:
$$X(k)/R\times P(k)/R\simeq Q(k)/R\times G(k)/R.$$
The group $P(k)/R$ is trivial because $P$ is a quasi-trivial torus. Furthermore, the exact sequence 
\begin{equation*}
        \begin{tikzpicture}[baseline= (a).base]
        \node[scale=1] (a) at (0,0){
        \begin{tikzcd}
       1\arrow[r] & S \arrow[r] & (S\times_k P)/M \arrow[r] & Q \arrow[r] & 1
       \end{tikzcd}};
        \end{tikzpicture}
\end{equation*}
is a flasque resolution of $Q$, yielding $Q(k)/R\simeq \text{H}^1(k,S)$. 
\end{proof}

\begin{oss}
In the upcoming sections, we will provide a more comprehensive interpretation of the bijection established in the previous proposition.
\end{oss}

\begin{oss}
The previous proposition applies to various scenarios, such as when $G$ is equal to $GL_n$, $SL_n$, or $Sp_{2n}$ because every torsor under one of these groups is trivialized by a Zariski covering.
\end{oss}

\subsection{Second construction of flasque quasi-resolutions}
\label{secondco}

Since the evaluation map does not depend on the choice of resolution, we can choose a more suitable one. In this section, we provide a more specific construction of flasque quasi-resolution.

\begin{oss}
For instance, in the case in which $H$ is connected, for the existence of flasque quasi-resolutions of $X$, there is no need to take a flasque resolution of $\pico {X_c}$, because this module is already flasque by \cite[Theorem 5.1]{MR2237268}. The restriction of the universal torsor over $X_c$, that is trivial over $x_0$, is a flasque quasi-resolution of $(X,x_0)$.
\end{oss}

We recall the following known result:

\begin{prop}
\label{pranzo}
Let $G$ be a linear algebraic $k$-group.
\begin{itemize}
    \item There exists $M\coloneqq G^{\emph{mult}}$, which is the largest quotient of multiplicative type of $G$.
    \item If $k\subset k'$ is a Galois extension, then $M_{k'}$ is the largest quotient of multiplicative type of $N_{k'}$.
    \item The canonical map $\widehat M \rightarrow \widehat G$ is bijective.
\end{itemize}
\end{prop}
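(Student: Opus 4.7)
The plan is to deduce everything from Cartier duality between commutative groups of multiplicative type over $k$ and finitely generated $\Gamma_k$-modules. Concretely, $\widehat G$ is by definition a finitely generated $\Gamma_k$-module (finitely generated because $G$ is of finite type over $k$), so Cartier duality produces a commutative group of multiplicative type $M$ over $k$ together with a canonical isomorphism $\widehat M\simeq \widehat G$. Item (3) then holds tautologically by the construction of $M$.

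For item (1), I would build the canonical morphism $G\to M$ at the level of Hopf algebras. The characters of $\overline G$ correspond to the group-like elements of the Hopf algebra $\overline k[\overline G]$, and they form a sub-Hopf-algebra of $\overline k[\overline G]$ which is naturally isomorphic to the group algebra $\overline k[\widehat G]=\overline k[\overline M]$. This sub-Hopf-algebra is stable under the $\Gamma_k$-action (since $\Gamma_k$ permutes the characters), so by Galois descent it comes from a $k$-Hopf-subalgebra $k[M]\hookrightarrow k[G]$, producing the required faithfully flat surjection $G\twoheadrightarrow M$. For universality, I would take any quotient $q:G\twoheadrightarrow M'$ with $M'$ of multiplicative type: pulling characters back gives an injection $\widehat{M'}\hookrightarrow \widehat G=\widehat M$, and Cartier duality dualises this to a morphism $M\to M'$ that clearly factors $q$; this is exactly the largest-quotient property.

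For item (2), the point is that the character $\Gamma_{k'}$-module $\widehat{G_{k'}}$ is just $\widehat G$ as an abelian group with its $\Gamma_k$-action restricted to $\Gamma_{k'}\subset \Gamma_k$, because both are computed on $\overline G=\overline{G_{k'}}$. Cartier duality and the construction above are compatible with flat base change, so $M_{k'}$ has the universal property defining $(G_{k'})^{\text{mult}}$.

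The main obstacle, and really the only non-formal step, is verifying in item (1) that the $\overline k$-sub-Hopf-algebra of $\overline k[\overline G]$ generated by characters is indeed $\Gamma_k$-stable and hence descends; this requires a short check that the group-like elements, viewed inside $\overline k[\overline G]$, are permuted by $\Gamma_k$ and that the descended algebra is still a Hopf subalgebra. Once this is in place, everything else (universality, base change, and the identification $\widehat M\simeq \widehat G$) follows formally. Since this proposition is classical, in practice I would cite \cite{MR2404747} or the standard references on groups of multiplicative type rather than carry out the Hopf-algebra argument in detail.
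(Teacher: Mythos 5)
Your proposal is correct, but it takes a genuinely different route from the paper. The paper never mentions Hopf algebras or Cartier duality: it introduces the family $\mathfrak F(\tilde k)$ of normal subgroups $G'\subseteq G_{\tilde k}$ with $G_{\tilde k}/G'$ of multiplicative type, produces a minimal element by noetherianity, proves its uniqueness via the embedding $G/(G'\cap G'')\hookrightarrow G/G'\times G/G''$, sets $M=G/G^{\min}_k$, obtains the base-change statement by Galois descent of $G^{\min}_{k'}$, and only then deduces the bijectivity of $\widehat M\rightarrow\widehat G$ from the universal property over $\overline k$. You instead build $M$ directly out of the $\Gamma_k$-module $\widehat G$ by Cartier duality and realize the quotient map $G\rightarrow M$ through the span of the group-like elements of $\overline k[\overline G]$; this makes the third item tautological and the base-change statement transparent (the character module is unchanged under base change), at the cost of the Hopf-algebraic input: you must check that distinct group-like elements are linearly independent so that their span is the group algebra $\overline k[\widehat G]$, that this span is $\Gamma_k$-stable and descends, and that the resulting inclusion of Hopf algebras corresponds to a faithfully flat surjection of group schemes (either by Takeuchi's theorem or, more elementarily, by noting that the scheme-theoretic image is a multiplicative-type subgroup of $D(\widehat G)$ with the same character group, hence everything). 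You should also justify that $\widehat G$ is finitely generated --- this is classical but does not follow merely from $G$ being of finite type without an argument. These are all standard facts and your sketch is sound; the paper's lattice-of-subgroups argument is the more self-contained of the two, while yours delivers item (3) for free.
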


\begin{proof}
Given a field extension $k\subset \Tilde{k}$, we denote by $\mathfrak F(\Tilde{k})$ the set of normal subgroups $G'$ of $G_{\Tilde{k}}$ such that $G_{\Tilde{k}}/G'$ is a group of multiplicative type.
\begin{itemize}
    \item The set $\mathfrak F(\Tilde{k})$ has a unique minimal element $G^{\text{min}}_{\Tilde{k}}$. Indeed, we can assume that $k=\Tilde{k}$, without loss of generality. There is a minimal element because $G$ is a noetherian topological space. We assume that $G'$ and $G''$ are two minimal elements of $\mathfrak F(k)$. The intersection $G'\cap G''$ is a normal subgroup of $G$ and there is a inclusion:
    \begin{equation*}
        \begin{tikzpicture}[baseline= (a).base]
        \node[scale=1] (a) at (0,0){
        \begin{tikzcd}
        G/(G'\cap G'') \arrow[r] & G/G' \times G/G''.
       \end{tikzcd}};
        \end{tikzpicture}
    \end{equation*}
    Since $G/G'$ and $G/G''$ are of multiplicative type, the group $G/(G'\cap G'')$ is of multiplicative type. Thus, by minimality, $G'=G'\cap G''=G''$.
    \item Let $\Gamma$ be the Galois group of the extension $k\subset k'$. By minimality, the action of $\Gamma$ on $G_{k'}$ preserves the group $G^{\text{min}}_{k'}$. By applying Galois descent, there exists a subgroup $G'$ of $G$ such that $G'_{k'}= G^{\text{min}}_{k'}$. The group $G'$ belongs to $\mathfrak F (k)$, since $G^{\text{min}}_{k'}$ is an element of $\mathfrak F(k')$. There is an inclusion $ G^{\text{min}}_{k'} \subset ( G^{\text{min}}_{k})_{k'}$ and thus $G' \subset G^{\text{min}}_{k}$. By minimality of $G^{\text{min}}_{k}$, these groups must be identical.
    \item The map is injective, since $M$ is a quotient of $G$. To establish the surjectivity we need to demonstrate that for all group morphisms $G_{\overline{k}}\rightarrow \mathbb G_{m,\overline{k}}$, there exists a commutative diagram:
    \begin{equation*}
        \begin{tikzpicture}[baseline= (a).base]
        \node[scale=1] (a) at (0,0){
        \begin{tikzcd}
        & M_{\overline{k}}\arrow[d, dashed]\\
        G_{\overline{k}}\arrow[r]\arrow[ur]& \mathbb G_{m, \overline{k}}.
       \end{tikzcd}};
        \end{tikzpicture}
\end{equation*}    
    By the previous item, the group $M_{\overline{k}}$ is the largest quotient of multiplicative type of $G_{\overline{k}}$ and thus we get the statement.
\end{itemize}
\end{proof}

We consider a homogeneous space $X=G/H$, where $G$ is a $\mathbb G_m$-quasi-trivial linear algebraic $k$-group, and $H$ is a closed subgroup. We denote by $x_0\in X(k)$ the class of the identity of $G$. 

\begin{oss}
By \cite[Proposition 3.1]{MR2404747}, every homogeneous space under a connected linear algebraic $k$-group is a homogeneous space under a $\mathbb G_m$-quasi-trivial linear algebraic $k$-group.
\end{oss}

We proceed with the construction of a flasque quasi-resolution of $(X,x_0)$. Let $M$ be the maximal quotient of multiplicative type of $H$. Let
\begin{equation*}
        \begin{tikzpicture}[baseline= (a).base]
        \node[scale=1] (a) at (0,0){
        \begin{tikzcd}
        1\arrow[r] & M\arrow[r] & S_0 \arrow[r] & E_0 \arrow[r] & 1
       \end{tikzcd}};
        \end{tikzpicture}
\end{equation*}
be a flasque resolution of $M$. We take the pushforward of the $H$-torsor $G\longrightarrow X$ by the group morphism $H\rightarrow M \rightarrow S_0$:
\begin{equation*}
        \begin{tikzpicture}[baseline= (a).base]
        \node[scale=1] (a) at (0,0){
        \begin{tikzcd}
         G \arrow[r]\arrow[d, "H"]& Y_0\coloneqq (G\times_k S_0) /H \arrow[ld, "S_0"]\\
         X.
       \end{tikzcd}};
        \end{tikzpicture}
\end{equation*}

We need the following lemma:

\begin{lemma}
\label{risolvetutto}
Let $G$ be a connected linear algebraic $k$-group, with trivial $\emph{Pic}(\overline {G})$, and let $X=G/H$ be a homogeneous space. There are the following isomorphisms of $\Gamma_k$-modules:
\begin{itemize}
    \item $\Ue (X)\simeq \emph{Ker} (\widehat {G} \rightarrow \widehat {H})$;
    \item $\emph{Pic} (\overline{X}) \simeq \emph {Coker}(\widehat {G} \rightarrow \widehat {H})$.
\end{itemize}
\end{lemma}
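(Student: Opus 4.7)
The strategy is to apply the Sansuc exact sequence directly to the $H$-torsor $G \longrightarrow X$. Writing down this sequence yields
\begin{equation*}
\begin{tikzcd}
0 \arrow[r] & \U(X) \arrow[r] & \U(G) \arrow[r] & \widehat{H} \arrow[r] & \pic(\overline{X}) \arrow[r] & \pic(\overline{G}) \arrow[r] & \pic(\overline{H}).
\end{tikzcd}
\end{equation*}
By hypothesis $\pic(\overline{G}) = 0$, so the sequence truncates to a short four-term exact sequence
\begin{equation*}
\begin{tikzcd}
0 \arrow[r] & \U(X) \arrow[r] & \U(G) \arrow[r] & \widehat{H} \arrow[r] & \pic(\overline{X}) \arrow[r] & 0.
\end{tikzcd}
\end{equation*}

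Next I would identify $\U(G)$ with $\widehat{G}$ as a $\Gamma_k$-module. This is Rosenlicht's theorem applied to a connected linear algebraic $k$-group: every invertible regular function on $\overline G$ is, up to a constant, a character of $\overline G$. This is exactly the generalization to connected linear algebraic groups of Proposition \ref{rosigene} which the paper already invokes for tori. Plugging this into the truncated sequence immediately gives the two required isomorphisms, provided one checks that the connecting map $\U(G)\simeq \widehat G\to \widehat H$ supplied by Sansuc coincides with the natural character restriction along the inclusion $H\hookrightarrow G$.

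This last identification is the only non-formal point in the argument. It can be verified from the explicit description of the Sansuc connecting map: given a character $\chi\in\widehat G$, its image in $\widehat H$ is computed by regarding $\chi$ as an invertible function on $\overline G$, viewing $\overline G\to\overline X$ as the $\overline H$-torsor, and reading off how $\overline H$ acts on $\chi$ by translation on the fibres. Since the right translation action of $\overline H$ on itself composed with the character $\chi|_{\overline H}$ returns $\chi|_{\overline H}$, the induced map on characters is precisely $\chi\mapsto \chi|_{\overline H}$. Once this compatibility is in place, reading off $\ker(\widehat G\to\widehat H) = \U(X)$ and $\mathrm{coker}(\widehat G\to\widehat H)=\pic(\overline X)$ finishes the proof. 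The main (indeed only) obstacle is the bookkeeping for this compatibility of the Sansuc connecting map with restriction of characters, which follows from the explicit proof of Sansuc's sequence recalled in the appendix.
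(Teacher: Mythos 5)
Your strategy is the natural one, and for \emph{connected} $H$ it is complete: the Sansuc sequence for the $H$-torsor $G\to X$, Rosenlicht's identification $\Ue(G)\simeq\widehat G$, and the check that the connecting map is restriction of characters together give both isomorphisms. Since the paper's own proof is a one-line citation, there is nothing to compare step by step; your argument is almost certainly the content of the cited result.

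There is, however, a genuine gap: nothing in the statement of Lemma \ref{risolvetutto} assumes that $H$ is connected, and the paper needs the lemma for arbitrary closed subgroups (in Proposition \ref{oralanomino} the subgroup $H$ of a $\mathbb G_m$-quasi-trivial group is arbitrary, and in Proposition \ref{semplice} the stabiliser is a group of multiplicative type, e.g.\ finite). The Sansuc sequence as recalled in Proposition \ref{sansuc}, and as proved in the appendix, is only for torsors under a \emph{connected} linear algebraic group, so you cannot simply ``write down this sequence'' for the $H$-torsor $G\to X$. The failure occurs at exactly the point you single out as the only non-formal one: the appendix defines the map $\Ue(Y)\to\widehat H$ by restricting an invertible function to a fibre and invoking Rosenlicht on the structure group, which requires $\Ue(\overline H)\simeq\widehat H$; for disconnected $H$ this is false (for $H$ finite of order $n$, $\Ue(\overline H)\simeq(\overline k^*)^{n-1}$ while $\widehat H$ is finite and may vanish), and Lemma \ref{san} likewise requires the factor to be $\overline k$-rational, hence irreducible. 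The statement itself remains true for every closed $H$: one establishes the five-term sequence $0\to\Ue(X)\to\widehat G\to\widehat H\to\pico{X}\to 0$ directly, using that the \emph{total space} $G$ is a connected group (so Rosenlicht applies to $\Ue(G)$ and $\pico{G}=0$ kills everything upstairs) and that a function or line bundle on $\overline G$ descends to $\overline X$ precisely when it carries an $\overline H$-equivariant structure, the obstruction being measured by $\widehat H=\widehat{H^{\mathrm{mult}}}$ (Proposition \ref{pranzo}); this is the classical Fossum--Iversen/Popov computation. Either add the hypothesis that $H$ is connected (which would not suffice for the paper's applications) or replace the appeal to Proposition \ref{sansuc} by this descent argument.
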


\begin{proof}
The proof of \cite[Proposition 2.5]{MR2952422} works under our assumptions.
\end{proof}

We can prove:

\begin{prop}
\label{oralanomino}
The $S_0$-torsor $Y_0\longrightarrow X$ is a flasque quasi-resolution of $(X,x_0)$. 
\end{prop}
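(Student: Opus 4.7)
The plan is to realize $Y_0$ as a homogeneous space under the enlarged group $\tilde G := G \times_k S_0$ and then apply Lemma \ref{risolvetutto} to compute $\U(Y_0)$ and $\emph{Pic}(\overline{Y_0})$ directly from characters. The group $\tilde G$ acts on $Y_0=(G\times_k S_0)/H$ by componentwise left multiplication; because $\phi: H\to M\to S_0$ takes values in an abelian group, this descends to a well-defined transitive action, and the stabilizer of the base point $y_0 := [(1_G,1_{S_0})]\in Y_0(k)$ is a closed subgroup $\tilde H \subset \tilde G$ of the form $\{(h^{-1},\phi(h)):h\in H\}$, which is isomorphic to $H$. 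Since $y_0$ lies above $x_0$, the marked-point condition for the flasque quasi-resolution is automatic.

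To invoke Lemma \ref{risolvetutto} I first verify that $\emph{Pic}(\overline{\tilde G})=0$; this follows from Lemma \ref{san} together with the triviality of $\emph{Pic}(\overline G)$ (since $G$ is $\mathbb G_m$-quasi-trivial) and of $\emph{Pic}(\overline{S_0})$ ($S_0$ being a torus). The lemma then identifies $\U(Y_0)$ and $\emph{Pic}(\overline{Y_0})$ with the kernel and cokernel of the restriction map $\widehat{\tilde G}\rightarrow \widehat{\tilde H}$. Writing $\widehat{\tilde G}=\widehat G\oplus\widehat{S_0}$ and using Proposition \ref{pranzo} to identify $\widehat{\tilde H}\simeq \widehat M$, a direct evaluation on elements of $\tilde H$ shows that the map sends $(a,b)$ to $\beta(b)-\alpha(a)$, where $\alpha:\widehat G\rightarrow\widehat M$ is restriction along $H\hookrightarrow G$ (composed with the canonical $\widehat H\simeq\widehat M$) and $\beta:\widehat{S_0}\rightarrow\widehat M$ is the surjection coming from the flasque resolution $1 \rightarrow M \rightarrow S_0 \rightarrow E_0 \rightarrow 1$.

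Surjectivity of $\beta$ immediately gives surjectivity of the combined map, so $\emph{Pic}(\overline{Y_0})=0$. Projecting the kernel onto the first summand produces a short exact sequence of $\Gamma_k$-lattices
\begin{equation*}
0 \rightarrow \widehat{E_0} \rightarrow \U(Y_0) \rightarrow \widehat G \rightarrow 0,
\end{equation*}
whose outer terms are permutation lattices ($\widehat{E_0}$ from the flasque resolution and $\widehat G$ because $G$ is $\mathbb G_m$-quasi-trivial and connected, using the Rosenlicht identification $\U(G)\simeq \widehat G$). By Shapiro's lemma, $\emph{Ext}^1_{\Gamma_k}(\widehat G,\widehat{E_0})$ is a direct sum of groups of the form $\mathrm H^1(\Gamma',\widehat{E_0})$ with $\Gamma'\subset\Gamma_k$ open, all of which vanish because any permutation lattice is coflasque. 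The sequence therefore splits, $\U(Y_0)$ is permutation, and $Y_0$ is $\mathbb G_m$-quasi-trivial; combined with the marked lift $y_0$ and the flasqueness of $S_0$ (built into the resolution), this verifies every clause of Definition \ref{defmia}. The main delicacy is the explicit bookkeeping of the restriction map $\widehat{\tilde G}\rightarrow\widehat{\tilde H}$ with correct signs; once the formula $(a,b)\mapsto\beta(b)-\alpha(a)$ is in hand, cokernel vanishing and kernel splitting are both formal.
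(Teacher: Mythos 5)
Your proposal is correct and follows essentially the same route as the paper: realize $Y_0$ as the homogeneous space $(G\times_k S_0)/\tilde H$ with $\tilde H\simeq H$ embedded anti-diagonally, apply Lemma \ref{risolvetutto} to identify $\mathrm{Pic}(\overline{Y_0})$ and $\U(Y_0)$ with the cokernel and kernel of $\widehat G\oplus\widehat{S_0}\to\widehat H\simeq\widehat M$, deduce surjectivity from the flasque resolution, and split the resulting extension $0\to\widehat{E_0}\to\U(Y_0)\to\widehat G\to 0$ of permutation lattices. Your explicit Shapiro/Ext vanishing argument is exactly the content of the reference to \cite[Lemma 1]{requiv} that the paper invokes at that step.
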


\begin{proof}
We need to demonstrate that $Y_0$ is $\mathbb G_m$-quasi-trivial. By construction, the embedding of $H$ into $G\times_k S_0$ induces a surjective morphism of $\Gamma_k$-modules $\widehat G \oplus \widehat {S_0} \rightarrow \widehat H\simeq \widehat M$. By Lemma \ref{risolvetutto}, the group $\text{Pic}(\overline {Y_0})$ is trivial and there is an isomorphism $\text{U}(Y_0)\simeq \text{Ker}(\widehat G\oplus \widehat {S_0} \rightarrow \widehat H)$. There is the following commutative diagram with exact rows:
    \begin{equation*}
        \begin{tikzpicture}[baseline= (a).base]
        \node[scale=1] (a) at (0,0){
        \begin{tikzcd}
        0 \arrow[r]& \widehat {E_0} \arrow[r]\arrow[d, "\id"]&  \text{U}(Y_0)\arrow[r]\arrow[d]& \widehat G \arrow[r]\arrow[d]& 0 \\
        0 \arrow[r]& \widehat {E_0} \arrow[r]& \widehat {S_0}\arrow[r] & \widehat H\arrow[r] & 0.
       \end{tikzcd}};
        \end{tikzpicture}
    \end{equation*}     
By \cite[Lemma 1]{requiv}, the group $\U(Y_0)$ is a permutation module, since $\widehat {E_0}$ and $\widehat G$ are. We have shown that the $S_0$-torsor $Y_0\longrightarrow X$ is a flasque quasi-resolution of $(X,x_0)$. 
\end{proof}

Similarly, we can consider a coflasque resolution of $M$:
\begin{equation*}
        \begin{tikzpicture}[baseline= (a).base]
        \node[scale=1] (a) at (0,0){
        \begin{tikzcd}
        1\arrow[r] & M\arrow[r] & P_0 \arrow[r] & Q_0 \arrow[r] & 1.
       \end{tikzcd}};
        \end{tikzpicture}
\end{equation*}
In the same way of Proposition \ref{oralanomino}, we can prove the following:

\begin{prop}
The pushforward of the $H$-torsor $G\longrightarrow X$ by the group morphism $H\rightarrow M \rightarrow P_0$ is a coflasque quasi-resolution of $X$.
\end{prop}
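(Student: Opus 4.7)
The plan is to mirror verbatim the argument of Proposition \ref{oralanomino}, substituting the coflasque resolution $1 \to M \to P_0 \to Q_0 \to 1$ for the flasque resolution used there. Since $P_0$ is a quasi-trivial torus by construction, the only substantive task is to verify that $Y_0 := (G \times_k P_0)/H$ is a $\mathbb{G}_m$-coflasque variety, that is, $\pico{Y_0} = 0$ and $\U(Y_0)$ is a coflasque $\Gamma_k$-lattice.

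To compute these invariants I would apply Lemma \ref{risolvetutto} to $Y_0$, viewed as a homogeneous space under the connected linear algebraic $k$-group $G \times_k P_0$. The hypotheses are met, since $G$ is $\mathbb{G}_m$-quasi-trivial and $P_0$ is a torus, so $\pic(\overline{G \times_k P_0}) = 0$. The lemma then yields
\begin{equation*}
\U(Y_0) \simeq \text{Ker}\bigl(\widehat{G} \oplus \widehat{P_0} \to \widehat{H}\bigr), \qquad \pico{Y_0} \simeq \text{Coker}\bigl(\widehat{G} \oplus \widehat{P_0} \to \widehat{H}\bigr),
\end{equation*}
where the second component of the map factors through the identification $\widehat{H} \simeq \widehat{M}$ from Proposition \ref{pranzo}. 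The dual of the coflasque resolution, namely $0 \to \widehat{Q_0} \to \widehat{P_0} \to \widehat{M} \to 0$, shows that $\widehat{P_0} \to \widehat{M}$ is already surjective. Hence $\pico{Y_0} = 0$, and a diagram chase dual to the one in the proof of Proposition \ref{oralanomino} yields a short exact sequence
\begin{equation*}
0 \to \widehat{Q_0} \to \U(Y_0) \to \widehat{G} \to 0.
\end{equation*}

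It then remains to show that $\U(Y_0)$ is coflasque. The lattice $\widehat{Q_0}$ is coflasque by construction, and $\widehat{G}$ is a permutation lattice because $G$ is $\mathbb{G}_m$-quasi-trivial; in particular $\widehat{G}$ is coflasque. Taking the long exact cohomology sequence associated to the short exact sequence above, for an arbitrary open subgroup $\Gamma' \le \Gamma_k$, the two flanking $H^1$ terms vanish, forcing $H^1(\Gamma', \U(Y_0)) = 0$, so $\U(Y_0)$ is coflasque.

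The argument is completely dual to that of Proposition \ref{oralanomino}, so no substantive new obstacle arises. The one point demanding (very minor) attention is that at the final step one cannot simply invoke \cite[Lemma 1]{requiv} to conclude that the extension is permutation — that would be too strong — and instead must observe that an extension of a coflasque lattice by a coflasque lattice is coflasque, which follows at once from the long exact cohomology sequence as above.
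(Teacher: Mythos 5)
Your proposal is correct and follows exactly the route the paper intends: the paper omits the proof, stating only that it proceeds ``in the same way'' as Proposition \ref{oralanomino}, and your dualized argument (surjectivity of $\widehat{P_0}\to\widehat{M}\simeq\widehat{H}$ killing $\pico{Y_0}$, then the extension $0\to\widehat{Q_0}\to\U(Y_0)\to\widehat{G}\to 0$ of a permutation lattice by a coflasque one) is precisely that adaptation. Your closing caveat is well taken, though note that \cite[Lemma 1]{requiv} does still split this extension (Ext of a permutation lattice by a coflasque lattice vanishes), giving $\U(Y_0)\simeq\widehat{Q_0}\oplus\widehat{G}$, which is coflasque; your direct long-exact-sequence argument reaches the same conclusion.
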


\begin{oss}
It is not a coincidence that for the flasque quasi-resolution $Y_0\overset{S_0}{\longrightarrow}X$ of $(X,x_0)$ the algebraic $k$-variety $Y_0$ has a homogeneous space structure. In fact, there is a result of Cao, that explains such a phenomenon (see \cite[Theorem 2.7]{MR3778194}).
\end{oss}

\subsection{General properties of the evaluation map}

In this subsection, we would like to investigate injectivity and/or surjectivity of the evaluation map. Let $X=G/H$ be a homogeneous space under a connected linear algebraic $k$-group $G$. We fix a point $x\in X(k)$ and a flasque quasi-resolution $Y\overset{S}{\longrightarrow}X$ of $(X,x)$. We can define an evaluation map:
\begin{equation*}
        \begin{tikzpicture}[baseline= (a).base]
        \node[scale=1] (a) at (0,0){
        \begin{tikzcd}
        X(k)\arrow[r, "\ev"] & \text{H}^1(k,S).
       \end{tikzcd}};
        \end{tikzpicture}
\end{equation*}

In general, the evaluation map is not injective, indeed we can prove the following known result:

\begin{prop}
\label{requiv}
The evaluation map factors through $R$-equivalence:
\begin{equation*}
        \begin{tikzpicture}[baseline= (a).base]
        \node[scale=1] (a) at (0,0){
        \begin{tikzcd}
         X(k)/R\arrow[r, "\eve"] & \emph{H}^1(k,S).
       \end{tikzcd}};
        \end{tikzpicture}
\end{equation*}
\end{prop}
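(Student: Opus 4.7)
The plan is to show that if two $k$-points of $X$ are strictly $R$-equivalent in the sense of Definition \ref{requivalenza}, then they have the same image under $\ev$; by transitivity this will give the factorisation through $X(k)/R$. So fix $x_1, x_2 \in X(k)$ together with a morphism $\phi : U \to X$, where $U$ is an open subset of $\mathbb{A}^1_k$, and $u_1, u_2 \in U(k)$ with $\phi(u_i)=x_i$.

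The first step is to pull back the flasque quasi-resolution along $\phi$, forming the cartesian square
\begin{equation*}
    \begin{tikzcd}
        Y_U \arrow[r] \arrow[d,"S"'] & Y \arrow[d,"S"] \\
        U \arrow[r,"\phi"] & X,
    \end{tikzcd}
\end{equation*}
so that $Y_U \to U$ is again an $S$-torsor. By functoriality of the evaluation, $\ev(x_i)$ equals the class in $\hl{1}(k,S)$ of the fibre $(Y_U)_{u_i}$, hence it suffices to prove that this class is independent of the choice of $k$-point $u \in U(k)$.

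Next I invoke the rigidity of torsors under flasque tori: $U$ is an open subset of $\mathbb{A}^1_k$, so $\U(U)$ is a permutation lattice (generated by the missing closed points of $\mathbb{A}^1_{\overline k} \setminus \overline U$) and $\pic(\overline U)=0$, i.e.\ $U$ is $\mathbb{G}_m$-quasi-trivial. By Proposition \ref{concentrato}, the $S$-torsor $Y_U \to U$ is constant, that is, it is the pull-back of some $S$-torsor $T \to \spec k$. Consequently $(Y_U)_u \simeq T$ as $S$-torsors over $\spec k$ for every $u \in U(k)$, and therefore $\ev(x_1)=[T]=\ev(x_2)$ in $\hl{1}(k,S)$.

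Finally, given two arbitrary $R$-equivalent $k$-points $x, y \in X(k)$, Definition \ref{requivalenza} yields a chain of strictly $R$-equivalent points joining them, to which the previous argument applies termwise. There is no real obstacle here: the substantive input is the constancy of flasque-torus torsors on opens of the affine line, already packaged in Proposition \ref{concentrato}; the only thing to check carefully is that the fibre of the pull-back at $u_i$ really computes $\ev(x_i)$, which is immediate from the cartesian square. I note in passing that this argument does not actually use the homogeneous-space hypothesis; that assumption enters earlier only to guarantee that $\ev$ itself is well-defined independently of the chosen flasque quasi-resolution and base point.
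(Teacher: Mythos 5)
Your proposal is correct and follows essentially the same route as the paper: reduce to strictly $R$-equivalent points, pull back the torsor along $\phi:U\to X$, and conclude from the constancy of $S$-torsors over the open subset $U\subset\mathbb A^1_k$. The only cosmetic difference is that the paper cites the constancy result of Colliot-Th\'el\`ene--Sansuc directly, whereas you rederive it from Proposition \ref{concentrato} by checking that $U$ is $\mathbb G_m$-quasi-trivial, which is a valid and equivalent step.
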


\begin{proof}
It is sufficient to establish that, given two points $x_0, x_1 \in X(k)$ that are strictly $R$-equivalent, there is the identity $\text{ev}(x_0)=\text{ev}(x_1)$. By definition, there is an open subset $U$ of $\mathbb A^1_k$ and a $k$-morphism $\phi:U\rightarrow X$ such that $\phi \left(U(k)\right)\ni x_0, x_1$. Consider the following cartesian diagram:
\begin{equation*}
        \begin{tikzpicture}[baseline= (a).base]
        \node[scale=1] (a) at (0,0){
        \begin{tikzcd}
         Y_U \arrow[d, "S"]\arrow[r]& Y\arrow[d, "S"]\\
         U\arrow[r, "\phi"] & X.
       \end{tikzcd}};
        \end{tikzpicture}
\end{equation*}
By \cite[Corollary 2.6]{MR878473} the torsor $Y_U \overset{S}{\longrightarrow} U$ is constant. Therefore, we obtain the identity $\text{ev}(x_0)=\text{ev}(x_1)$.
\end{proof}

\begin{oss}
From now on, when we say evaluation map, we could assume that the domain is $X(k)/R$. 
\end{oss}

For flasque resolutions of connected linear algebraic $k$-groups, the evaluation map is a group morphism. In particular, the map has trivial kernel if and only if it is injective. However, in the case of homogeneous spaces, this is no longer true. Instead, we have the following weaker result:

\begin{prop}
\label{nonmoltointeressante}
The evaluation map 
\begin{equation*}
        \begin{tikzpicture}[baseline= (a).base]
        \node[scale=1] (a) at (0,0){
        \begin{tikzcd}
         X(k)/R\arrow[r, "\eve"] & \emph{H}^1(k,S)
       \end{tikzcd}};
        \end{tikzpicture}
\end{equation*}
is injective if and only if, for all $x_0\in X(k)$, the evaluation map
\begin{equation*}
        \begin{tikzpicture}[baseline= (a).base]
        \node[scale=1] (a) at (0,0){
        \begin{tikzcd}
         X(k)/R\arrow[r, "\eve_0"] & \emph{H}^1(k,S_0),
       \end{tikzcd}};
        \end{tikzpicture}
\end{equation*}
induced by a flasque quasi-resolution $Y_0\overset{S_0}{\longrightarrow}X$ of $(X,x_0)$, has trivial kernel.
\end{prop}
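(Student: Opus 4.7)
The plan is to derive both implications from Proposition \ref{nonsapreiproprio}, which guarantees that the evaluation maps associated to different flasque quasi-resolutions and base points are related by compatible bijections of the target cohomology groups; this applies here since $X$ is a homogeneous space (by the remark following that proposition). The core observation I would use is that for any flasque quasi-resolution of $(X, x_0)$, evaluation at the marked point $x_0$ yields the trivial class in $\text{H}^1(k, S_0)$, because the resolution is trivial over $x_0$ by definition.

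For the forward direction, assume that $\text{ev}$ is injective. Given any $x_0 \in X(k)$ and any flasque quasi-resolution $Y_0 \overset{S_0}{\longrightarrow} X$ of $(X, x_0)$, Proposition \ref{nonsapreiproprio} produces a commutative triangle with a vertical bijection $\text{H}^1(k, S) \simeq \text{H}^1(k, S_0)$, so $\text{ev}_0$ is also injective. Combined with the triviality of $\text{ev}_0(x_0)$, this forces the preimage of the trivial class under $\text{ev}_0$ to be the singleton class of $x_0$ in $X(k)/R$, which is exactly what "trivial kernel" means in this setting.

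For the converse, I would argue the contrapositive: suppose $\text{ev}(y) = \text{ev}(z)$ for some $y, z \in X(k)$ that are not $R$-equivalent. Using Proposition \ref{question}, fix a flasque quasi-resolution $Y_0 \overset{S_0}{\longrightarrow} X$ of $(X, y)$. Applying Proposition \ref{nonsapreiproprio} to the pair $Y \overset{S}{\longrightarrow} X$ and $Y_0 \overset{S_0}{\longrightarrow} X$ transfers the equality $\text{ev}(y) = \text{ev}(z)$ to $\text{ev}_0(y) = \text{ev}_0(z)$. Since $\text{ev}_0(y)$ is the trivial class, the class of $z$ lies in the kernel of $\text{ev}_0$ yet is distinct from that of $y$ in $X(k)/R$, so $\text{ev}_0$ does not have trivial kernel; by contraposition, the hypothesis that every such $\text{ev}_0$ has trivial kernel yields the desired injectivity.

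There is no serious technical obstacle; the proof is essentially a mechanical unpacking of Proposition \ref{nonsapreiproprio}. The only point requiring verification is the density hypothesis on $Y(k)$ and $Y_0(k)$ appearing in that proposition, which is explicitly covered by the remark asserting that the result applies to homogeneous spaces.
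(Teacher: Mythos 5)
Your proof is correct and follows essentially the same route as the paper: both directions are obtained by transferring between evaluation maps via the commutative triangle of Proposition \ref{nonsapreiproprio}, together with the observation that $\text{ev}_0(x_0)$ is the trivial class because the quasi-resolution is trivial over the marked point. Your explicit check of the density hypothesis (covered by the remark that Proposition \ref{nonsapreiproprio} applies to homogeneous spaces) is a point the paper leaves implicit, but the argument is the same.
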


\begin{proof}
By Proposition \ref{nonsapreiproprio}, we have the following commutative diagram:
\begin{equation*}
        \begin{tikzpicture}[baseline= (a).base]
        \node[scale=1] (a) at (0,0){
        \begin{tikzcd}
        & \text{H}^1(k,S_0)\arrow[dd, "\sim"]\\
        X(k)/R\arrow[rd, "\ev"']\arrow[ru, "\ev_0"]\\
        & \text{H}^1(k,S).
       \end{tikzcd}};
        \end{tikzpicture}
\end{equation*}
This implies that if $\text{ev}$ is injective, then $\ev_0$ is injective as well. Now, suppose by contradiction that there exist $x_1$ and $x_2$ in $X(k)$ such that $\ev(x_1)=\ev(x_2)$, but they are not $R$-equivalent. We can consider a flasque quasi-resolution of $(X,x_1)$. Thanks to the above diagram, we obtain that $\ev(x_1)=\ev(x_2)$. By hypothesis, the map $\ev_0$ has trivial kernel, thus $x_1$ and $x_2$ are $R$-equivalent, which is a contradiction.
\end{proof}

We can give an example in which the injectivity is verified:

\begin{prop}
\label{esempietto}
Suppose that $G$ is $\mathbb G_m$-quasi-trivial and that $H$ is connected and reductive. We assume the following properties:
\begin{itemize}
    \item the group $G(k)/R$ is trivial,
    \item the set $\Hh^1(k,\Tilde{H}^{\emph{ss}})$ is trivial, for all forms $\Tilde{H}^{\emph{ss}}$ of the semi-simple part of $H$,
    \item the maximal toric quotient $H^{\emph{tor}}$ of $H$ is flasque.
\end{itemize}
Then, the evaluation map is injective.
\end{prop}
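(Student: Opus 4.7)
The plan is to reduce injectivity of the evaluation map to a concrete non-abelian cohomology computation, using the explicit construction of Subsection \ref{secondco}. First, I would invoke Proposition \ref{nonmoltointeressante}: injectivity of $\ev$ on $X(k)/R$ is equivalent to the kernel of the evaluation map being trivial for every base point $x_0 \in X(k)$. So I fix an arbitrary $x_0$ and write $X \simeq G/H_0$, where the stabilizer $H_0$ is an inner form of $H$. Since inner automorphisms of a connected reductive group act trivially on the abelianization, $H_0^{\text{tor}}$ is canonically identified with $H^{\text{tor}}$ as a $k$-torus, hence is still flasque; and $H_0^{\text{ss}}$ is a $k$-form of $H^{\text{ss}}$, so still covered by the vanishing hypothesis.

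Next, I use the second construction (Proposition \ref{oralanomino}). Because $H_0^{\text{tor}}$ is flasque, the trivial sequence $1 \to H_0^{\text{tor}} \xrightarrow{\id} H_0^{\text{tor}} \to 1 \to 1$ is a valid flasque resolution of $M = H_0^{\text{mult}} = H_0^{\text{tor}}$, and the resulting flasque quasi-resolution $Y_0 \to X$ of $(X, x_0)$ has structure torus $S_0 = H_0^{\text{tor}}$: it is the pushforward of the $H_0$-torsor $G \to X$ along $H_0 \to H_0^{\text{tor}}$. Consequently the evaluation map factors through non-abelian cohomology as
\begin{equation*}
X(k) \longrightarrow \Hh^1(k, H_0) \longrightarrow \Hh^1(k, H_0^{\text{tor}}) = \Hh^1(k, S_0),
\end{equation*}
the first map sending $x$ to the class $[G_x]$ of the fiber.

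Now suppose $\ev(x_1) = 0$. Then $[G_{x_1}]$ lies in the kernel of $\Hh^1(k, H_0) \to \Hh^1(k, H_0^{\text{tor}})$; by the exactness of the pointed-set sequence coming from $1 \to H_0^{\text{ss}} \to H_0 \to H_0^{\text{tor}} \to 1$ and the vanishing $\Hh^1(k, H_0^{\text{ss}}) = 0$, this kernel is trivial. Hence $[G_{x_1}] = 0$, so $x_1 = g \cdot x_0$ for some $g \in G(k)$. Finally, the orbit morphism $G \to X$, $g' \mapsto g' \cdot x_0$, sends $R$-equivalence classes in $G(k)$ to $R$-equivalence classes in $X(k)$; since $G(k)/R$ is trivial, $g$ is $R$-equivalent to $1_G$ in $G(k)$, whence $x_1 \underset{R}{\sim} x_0$ in $X$.

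The main subtlety, and the reason the hypothesis on $H^{\text{ss}}$ is formulated \emph{for all forms}, is precisely the base-point change in the first step: different choices of $x_0$ give rise to different inner forms $H_0^{\text{ss}}$ of $H^{\text{ss}}$, and the cohomological vanishing must hold for each. Beyond this, the argument is a direct chase through the Sansuc-style pushforward construction and the pointed-set sequence associated to the extension $1 \to H^{\text{ss}} \to H \to H^{\text{tor}} \to 1$.
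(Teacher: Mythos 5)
Your proposal is correct and follows essentially the same route as the paper: reduce via Proposition \ref{nonmoltointeressante} to showing trivial kernel at each base point, use the flasqueness of $H^{\text{tor}}$ so that the second construction with the trivial flasque resolution gives $S_0 = H^{\text{tor}}$, factor the evaluation map through $\mathrm{H}^1(k,\Tilde{H}) \to \mathrm{H}^1(k,H^{\text{tor}})$ whose kernel is killed by the vanishing of $\mathrm{H}^1(k,\Tilde{H}^{\text{ss}})$ for the twisted stabilizer, and finish with the triviality of $G(k)/R$. Your added remark on why the hypothesis must cover all forms of $H^{\text{ss}}$ matches the paper's implicit use of the twisted stabilizer $\Tilde{H}$.
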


\begin{proof}
Let $x_0$ be a point in $X(k)$. The stabiliser of $x_0$ is a form $\Tilde{H}$ of $H$ that fits in the following central exact sequence:
\begin{equation*}
        \begin{tikzpicture}[baseline= (a).base]
        \node[scale=1] (a) at (0,0){
        \begin{tikzcd}
        1 \arrow[r] & \Tilde{H}^{\text{ss}}\arrow[r] & \Tilde{H}\arrow[r] & H^{\text{tor}}\arrow[r] & 1.
       \end{tikzcd}};
        \end{tikzpicture}
\end{equation*}
Due to Proposition \ref{oralanomino}, the pushforward of $G\rightarrow G/\Tilde{H}=X$ by the map $\Tilde{H}\rightarrow H^{\text{tor}}$ is a flasque quasi-resolution of $(X,x_0)$. Therefore, the induced evaluation map $\ev$ fits into the following commutative diagram:
\begin{equation*}
        \begin{tikzpicture}[baseline= (a).base]
        \node[scale=1] (a) at (0,0){
        \begin{tikzcd}
        & G(k)\arrow[d] \\
        & X(k)\arrow[d]\arrow[rd, "\ev"] \\
        \Hh^1(k, \Tilde{H}^{\text{ss}})\arrow[r]& \Hh^1(k, \Tilde{H})\arrow[r] & \Hh^1(k, H^{\text{tor}}).
        \end{tikzcd}};
        \end{tikzpicture}
\end{equation*}
By hypothesis, the set $\Hh^1(k, \Tilde{H}^{\text{ss}})$ is trivial and thus the map $\Hh^1(k,\Tilde{H})\rightarrow \Hh^1(k, H^{\text{tor}})$ has trivial kernel. This implies that if $\Tilde{x}$ is in the kernel of the evaluation map, then it is in the kernel of $X(k)\rightarrow \Hh^1(k,\Tilde{H})$. Therefore, $\Tilde{x}$ is in the image of $G(k)\rightarrow X(k)$. By hypothesis, the group $G(k)/R$ is trivial, thus $\Tilde{x}$ is $R$-equivalent to the class of the identity of $G$. We have just demonstrated that the map
\begin{equation*}
        \begin{tikzpicture}[baseline= (a).base]
        \node[scale=1] (a) at (0,0){
        \begin{tikzcd}
        X(k)/R \arrow[r, "\ev"] & \text{H}^1(k,H^{\text{tor}})
       \end{tikzcd}};
        \end{tikzpicture}
\end{equation*}
has trivial kernel. We conclude by applying Proposition \ref{nonmoltointeressante}. 
\end{proof}

\begin{oss}
For example, Proposition \ref{esempietto} applies when the field $k$ is good and $H$ is a central extension of a flasque torus by a semi-simple simply connected group. 
\end{oss}

We are going to prove that, the bijection described in Proposition \ref{semplice}, it is induced by a flasque quasi-resolution, when the group $G$ is $\mathbb G_m$-quasi-trivial:

\begin{prop}
We assume that $G$ is $\mathbb G_m$-quasi-trivial and that all the $G$-torsors are generically trivial. We assume that $H=M$ is a group of multiplicative type. The evaluation map is surjective, and it is injective if and only if $G(k)/R$ is trivial.
\end{prop}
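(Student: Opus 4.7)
The plan is to use the explicit flasque quasi-resolution constructed in Proposition \ref{oralanomino}: fix a flasque resolution $1 \to M \to S_0 \to E_0 \to 1$ of $M$ and take $Y_0=(G\times_k S_0)/M \to X$ as a flasque quasi-resolution of $(X,x_0)$ with $x_0=[1_G]$. Since the evaluation map does not depend on the choice of flasque quasi-resolution (by the uniqueness results of Section \ref{altrolabel}), we may work with this one. Unwinding the definitions, $\ev$ factors as $\phi_* \circ \delta$, where $\delta\colon X(k) \to \text{H}^1(k,M)$ is the connecting map of the $M$-torsor $G \to X$ and $\phi_*\colon \text{H}^1(k,M) \to \text{H}^1(k,S_0)$ is the pushforward induced by the inclusion $\phi\colon M\hookrightarrow S_0$.

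For surjectivity, the hypothesis that every $G$-torsor is generically trivial, applied to torsors over $\text{Spec}\,k$, forces $\text{H}^1(k,G)=0$ (a torsor over a point is generically trivial exactly when it is trivial). The long exact sequence of the $M$-torsor $G \to X$ then shows that $\delta$ is surjective. Since $E_0$ is quasi-trivial, $\text{H}^1(k,E_0)=0$, and the long exact sequence of $1 \to M \to S_0 \to E_0 \to 1$ makes $\phi_*$ surjective. Composing, $\ev$ is surjective.

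For the injectivity criterion, I would invoke Proposition \ref{semplice} to obtain a bijection $\Psi\colon X(k)/R \simeq \text{H}^1(k,S_0)\times G(k)/R$; note that the two appearances of $S_0$ coincide, because the isomorphism $Q(k)/R\simeq \text{H}^1(k,S_0)$ in the proof of Proposition \ref{semplice} uses the flasque resolution $1 \to S_0 \to R_0 \to Q \to 1$ of $Q$, where $R_0=(P\times_k S_0)/M$ is built from the same $S_0$ together with a coflasque resolution $1 \to M \to P \to Q \to 1$ of $M$. The heart of the matter is to show that $\ev$ coincides, up to a sign of convention, with $\text{pr}_1 \circ \Psi$; granting this, $\ev$ is injective if and only if each fiber $\{[c]\}\times G(k)/R$ is a singleton, i.e., if and only if $G(k)/R$ is trivial.

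The main obstacle is this identification of $\ev$ with $\text{pr}_1 \circ \Psi$, which I would carry out by a direct cocycle computation. Given $x\in X(k)$, lift $x$ to $z=[g,p]\in Z(k)=(G\times_k P)/M$ (the lift exists since $\text{H}^1(k,P)=0$), and set $q=[p]\in Q(k)$. Writing $\sigma(g)=g\,m_\sigma$ with $m_\sigma\in M(\overline{k})$, so that $\delta(x)=[m_\sigma]$, the condition $z\in Z(k)$ forces $\sigma(p)=\psi(m_\sigma)^{-1}p$, where $\psi\colon M\hookrightarrow P$. Lifting $q$ to $r=[p,1]\in R_0(\overline{k})$ and using the identity $[\psi(m),1]=[1,\phi(m)]$ in $R_0$, the $S_0$-cocycle $r^{-1}\sigma(r)$ representing the connecting image of $q$ in $\text{H}^1(k,S_0)$ evaluates to $\pm[\phi(m_\sigma)]$, which matches $\phi_*[m_\sigma]=\ev(x)$ up to sign. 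Modulo this sign, this is precisely $\text{pr}_1\circ\Psi(x)$, completing the identification and hence the proof.
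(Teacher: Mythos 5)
Your proof is correct, and it reaches the same destination as the paper's by a recognisably parallel but differently executed route. Both arguments hinge on the same auxiliary objects: a coflasque resolution $1\to M\to P\to Q\to 1$, the intermediate homogeneous space $Z=(G\times_k P)/M$ fibred over both $X$ (as a $P$-torsor) and $Q$ (as a generically trivial $G$-torsor), and the Colliot-Th\'{e}l\`{e}ne--Sansuc bijection $Q(k)/R\simeq\mathrm{H}^1(k,S_0)$. The paper never touches cocycles: it pulls the flasque quasi-resolution back along $Z\to X$ (Proposition \ref{colliotprati}), splits $Z$ birationally as $U\times_k G$ over an open $U\subset Q$, and reads off from two commutative diagrams that $\mathrm{ev}$ is, up to bijections, the projection $Q(k)/R\times G(k)/R\to Q(k)/R$, so that surjectivity and the injectivity criterion fall out simultaneously. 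You instead make the map explicit as $\phi_*\circ\delta$, prove surjectivity separately from $\mathrm{H}^1(k,G)=0$ (a clean observation: a torsor over $\mathrm{Spec}\,k$ is generically trivial exactly when it is trivial) together with $\mathrm{H}^1(k,E_0)=0$, and obtain the injectivity criterion by identifying $\mathrm{ev}$ with $\mathrm{pr}_1\circ\Psi$ through a direct cocycle computation; that computation is right (the identity $[\psi(m),1]=[1,\phi(m)]$ in $(P\times_k S_0)/M$ and the resulting cocycle $\phi(m_\sigma)^{\pm 1}$ both check out). The one step you should spell out is that $\mathrm{pr}_1\circ\Psi$ really is ``lift $x$ to $z\in Z(k)$, push to $q\in Q(k)$, apply the connecting map'': the bijection of Proposition \ref{semplice} is assembled from Corollary \ref{cormoving} via common open subsets of $Z$, so one must observe that every intermediate bijection is induced by one of the two projections of $Z$ and that the connecting map $Q(k)\to\mathrm{H}^1(k,S_0)$ is constant on $R$-classes. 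This is true and routine, but it is precisely the point where the paper's diagrammatic formulation does the work for free, whereas your version buys in exchange a completely explicit description of the evaluation map.
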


\begin{proof}
Let
\begin{equation*}
        \begin{tikzpicture}[baseline= (a).base]
        \node[scale=1] (a) at (0,0){
        \begin{tikzcd}
       1\arrow[r] & M \arrow[r] & P \arrow[r] & Q \arrow[r] & 1,
       \end{tikzcd}};
        \end{tikzpicture}
\end{equation*}
be a coflasque resolution. We consider the $G$-torsor $(G\times_k P)/M\longrightarrow Q$ and the $P$-torsor $(G\times_k P)/M\longrightarrow  X$. Due to Proposition \ref{colliotprati}, the pull-back by these torsors preserves flasque quasi-resolutions. Then, the second torsor induces the following commutative diagram: 
\begin{equation*}
        \begin{tikzpicture}[baseline= (a).base]
        \node[scale=1] (a) at (0,0){
        \begin{tikzcd}
       ((G\times_k P)/M)(k)/R \arrow[rd, "\ev"]\arrow[dd, "\sim"] \\
       & \text{H}^1(k,S).\\
       X(k)/R\arrow[ur, "\ev"]
       \end{tikzcd}};
        \end{tikzpicture}
\end{equation*}
Let $U$ be an open subset of $X$, that makes the torsor $(G\times_k P)/M\overset{G}{\longrightarrow} Q$ trivial. There is the following commutative diagram:
\begin{equation*}
        \begin{tikzpicture}[baseline= (a).base]
        \node[scale=1] (a) at (0,0){
        \begin{tikzcd}
       U(k)/R \times G(k)/R\arrow[r, "\sim"]\arrow[dd]&((G\times_k P)/M)(k)/R \arrow[rd, "\ev"]\arrow[dd] \\
       && \text{H}^1(k,S)\\
       U(k)/R\arrow[r, "\sim"]&Q(k)/R\arrow[ur, "\ev", "\sim"']
       \end{tikzcd}};
        \end{tikzpicture}
\end{equation*}
We obtain the claim by combining the two diagrams.
\end{proof}

\subsection{On a theorem of Colliot-Th\'{e}l\`{e}ne and Kunyavski\u{\i}}

Let $X=G/H$ be a homogeneous space under a connected linear algebraic $k$-group $G$ with connected stabiliser $H$, and let $X_c$ be a smooth compactification of $X$. We denote by $x_0$ the class of the identity of $G$. Let $Y_0\longrightarrow X_c$ be the universal $S_0$-torsor with trivial fibre over $x_0$. 

We recall the following definition:

\begin{Def}
\label{goodfield}
Let $k$ be a field of cohomological dimension $\leq 2$. We say that $k$ is good if it satisfies the following properties for all finite extensions $k\subset k'$:
\begin{itemize}
    \item index and exponent of all central simple algebras over $k'$ coincide;
    \item for all $\mathbb G_m$-quasi-trivial linear algebraic $k'$-group $\Tilde G$, the set $\text{H}^1(k', \Tilde G)$ is trivial.
\end{itemize}
\end{Def}

\begin{prop}
\label{excoro}
Let $X'$ be a homogeneous space under a connected linear algebraic $k$-group $G'$. Suppose that the maximal toric quotient of the geometric stabiliser is trivial. Suppose that the field $k$ is good.
\begin{itemize}
    \item There exists a $G$-torsor that dominates $X'$.
    \item If $G'$ is $\mathbb G_m$-quasi-trivial, then $X'$ has a $k$-point.
\end{itemize}
\end{prop}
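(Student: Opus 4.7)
The plan is to reduce the second item to the first and then prove the first via a Springer-type lifting argument. Assuming the first item, the second is immediate: if $G'$ is $\mathbb G_m$-quasi-trivial then by the definition of a good field $\mathrm H^1(k,G')=\{*\}$, so the $G'$-torsor $T\to \spec k$ produced by the first item must be trivial, i.e.\ $T\cong G'$ as a $k$-scheme, and in particular $T(k)\ni 1$; the image of this $k$-point under the accompanying dominant $k$-morphism $T\to X'$ yields the desired $k$-point of $X'$.

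For the first item, I would proceed in two steps. First, I would reduce to the $\mathbb G_m$-quasi-trivial setting via \cite[Proposition~3.1]{MR2404747}: choose a central exact sequence $1\to F\to\tilde G\to G'\to 1$ with $\tilde G$ connected $\mathbb G_m$-quasi-trivial and $F$ a flasque central $k$-torus, so that $\tilde G$ acts transitively on $X'$. For a geometric point $\bar x\in X'(\bar k)$, the stabilizer $\tilde H\subset\tilde G_{\bar k}$ sits in $1\to F\to\tilde H\to\bar H\to 1$, and the hypothesis $\bar H^{\mathrm{tor}}=1$ together with the flasqueness of $F$ forces the maximal multiplicative-type quotient $\tilde H^{\mathrm{mult}}$ to be an extension of a finite group of multiplicative type by a flasque torus.

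Second, I would produce the dominating torsor by a cocycle-lifting argument. The isomorphism class of $X'$ as a $\tilde G$-homogeneous space is encoded by a Galois cohomology class in $\mathrm H^1(k,\tilde G_{\bar k}/\tilde H)$, and exhibiting $T$ together with a dominant $k$-morphism $T\to X'$ amounts to lifting this class along $\mathrm H^1(k,\tilde G)\to\mathrm H^1(k,\tilde G_{\bar k}/\tilde H)$. Pushing the obstruction through the abelianization $\tilde H\to\tilde H^{\mathrm{mult}}$, it lands in $\mathrm H^2(k,\tilde H^{\mathrm{mult}})$. A d\'evissage on a flasque resolution of $\tilde H^{\mathrm{mult}}$, combined with the three defining properties of a good field (cohomological dimension $\le 2$, index-period property, Serre's conjecture~II for quasi-trivial groups), then allows the obstruction to be annihilated, possibly after enlarging $\tilde G$ by a quasi-trivial central factor; the lifted class provides the torsor $T$ and the dominant $k$-morphism.

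\textbf{Main obstacle.} The crux is the vanishing of the Springer-type obstruction in $\mathrm H^2(k,\tilde H^{\mathrm{mult}})$. The hypothesis $\bar H^{\mathrm{tor}}=1$ forces $\tilde H^{\mathrm{mult}}$ into a ``small'' shape (an extension of a finite multiplicative-type module by a flasque torus), but one must still invoke the full strength of the good-field hypothesis and a careful flasque resolution of $\tilde H^{\mathrm{mult}}$ to annihilate the obstruction, essentially reproducing the original strategy of Colliot-Th\'el\`ene and Kunyavski\u{\i}.
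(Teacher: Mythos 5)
Your treatment of the second item coincides exactly with the paper's: goodness of $k$ gives $\mathrm H^1(k,G')=1$ for $\mathbb G_m$-quasi-trivial $G'$, so the dominating torsor produced by the first item is trivial and its $k$-point maps to a $k$-point of $X'$. For the first item, however, the paper does not reprove anything: it simply invokes \cite[Remark 4.1.1]{MR2237268} together with \cite[Section 5.1]{MR1608617}, and your attempted reconstruction of that argument has two concrete problems.

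First, the reduction to a $\mathbb G_m$-quasi-trivial cover $1\to F\to\tilde G\to G'\to 1$ is counterproductive here: the stabiliser $\tilde H$ of a geometric point for the $\tilde G$-action is the preimage of $\bar H$, so its maximal toric quotient contains (essentially is) the flasque torus $F$ and is no longer trivial. You have traded a homogeneous space satisfying the hypothesis of the proposition for one that does not, and $\mathrm H^2(k,F)$ has no reason to vanish over a good field (already $\mathrm H^2(k,\mathbb G_m)=\mathrm{Br}(k)\neq 0$ for $p$-adic fields, which are good), so the d\'evissage you propose to annihilate the abelian obstruction does not go through. Second, and more fundamentally, the obstruction to dominating $X'$ by a torsor is not a lifting problem along $\mathrm H^1(k,\tilde G)\to\mathrm H^1(k,\tilde G_{\bar k}/\tilde H)$ (the target is not defined over $k$, and homogeneous spaces are not classified by such a set in any case); it is Springer's class in the nonabelian $\mathrm H^2(k,L)$ of a $k$-kernel $L$ with underlying $\bar k$-group $\bar H$, and what must be proved is that this class is \emph{neutral}. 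Killing its image under the abelianisation $\bar H\to\bar H^{\mathrm{mult}}$ is not sufficient for neutrality in general. The point of the hypothesis that the toric quotient of $\bar H$ is trivial is that the reductive quotient of $\bar H$ is then semisimple, and Borovoi's neutrality criterion asserts that over a good field every class of a kernel with such an underlying group is neutral; that is precisely the content of the cited references, and it is the step your sketch leaves unproved.
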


\begin{proof}
The first item is a combination of \cite[Remark 4.1.1]{MR2237268} and \cite[Section 5.1]{MR1608617}. If $G'$ is $\mathbb G_m$-quasi-trivial, then every $G'$-torsor is trivial, since $k$ is good. Thus, there is a map $G'\rightarrow X'$, in particular $X'(k)$ is non-empty.
\end{proof}

In \cite[Theorem 6.1]{MR2237268}, Colliot-Th\'{e}l\`{e}ne and Kunyavski\u{\i} proved the following theorem:

\begin{thm}
\label{ctkoriginal}
The torus $S_0$ is flasque and the evaluation map induced by such a universal torsor is surjective if $k$ is good.
\end{thm}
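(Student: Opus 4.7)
The first assertion, that $S_0$ is flasque, is equivalent to the statement that $\text{Pic}(\overline{X_c})$ is a flasque $\Gamma_k$-lattice, since by definition $\widehat{S_0}=\text{Pic}(\overline{X_c})$. This is the theorem of Colliot-Th\'{e}l\`{e}ne and Kunyavski\u{\i} \cite[Theorem 5.1]{MR2237268}, applied to the smooth compactification $X_c$ of the homogeneous space $X$.

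For the surjectivity of $\ev\colon X(k)\to H^1(k,S_0)$ induced by $Y_0|_X\to X$, the plan is to trade $Y_0$ for a flasque quasi-resolution admitting a more explicit group-theoretic description. By \cite[Proposition 3.1]{MR2404747} one may assume $G$ is $\mathbb G_m$-quasi-trivial. Let $1\to M\to S\to E\to 1$ be a flasque resolution of $M:=H^{\text{mult}}$, with $S$ flasque and $E$ quasi-trivial. By Proposition \ref{oralanomino}, the $S$-torsor $Y:=(G\times_k S)/H\to X$, where $H$ embeds diagonally via $(\id,m)$ for $m\colon H\twoheadrightarrow M\hookrightarrow S$, is a flasque quasi-resolution of $(X,x_0)$. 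By the uniqueness results (Propositions \ref{due} and \ref{nonsapreiproprio}), the evaluation maps of $Y_0|_X$ and of $Y$ are identified via a canonical isomorphism $H^1(k,S_0)\xrightarrow{\sim}H^1(k,S)$, so it suffices to prove that $\ev_Y\colon X(k)\to H^1(k,S)$ is surjective.

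In this explicit description, $\ev_Y$ factors as
\[
X(k)\xrightarrow{\delta}H^1(k,H)\to H^1(k,M)\to H^1(k,S),
\]
where $\delta$ is the coboundary of the $H$-torsor $G\to X$ and the other two maps are induced by $H\twoheadrightarrow M\hookrightarrow S$. Given $\alpha\in H^1(k,S)$, the plan is to lift it step by step along this factorization. Since $E$ is quasi-trivial one has $H^1(k,E)=0$ by Hilbert 90, hence $H^1(k,M)\twoheadrightarrow H^1(k,S)$; lift $\alpha$ to $\beta\in H^1(k,M)$. Next, lift $\beta$ to $\tilde\beta\in H^1(k,H)$; this is the hard step (discussed below). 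Finally, from the exact sequence $G(k)\to X(k)\xrightarrow{\delta}H^1(k,H)\to H^1(k,G)$ and the vanishing $H^1(k,G)=1$ (which holds because $G$ is $\mathbb G_m$-quasi-trivial and $k$ is good), the class $\tilde\beta$ lies in the image of $\delta$: choose $x\in X(k)$ with $\delta(x)=\tilde\beta$, and then the factorization gives $\ev_Y(x)=\alpha$.

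The hard part will be the middle step, the surjectivity of $H^1(k,H)\to H^1(k,M)$ for $H$ a connected linear algebraic $k$-group with maximal multiplicative-type quotient $M$. The kernel $N:=\ker(H\to M)$ is a connected linear algebraic group (the semisimple-plus-unipotent part of $H$), and the obstruction to lifting $\beta$ is controlled by a non-abelian $H^2$ class attached to the twisted form ${}^\beta N$. The goodness of $k$---through Serre's conjecture II for the semisimple simply connected covering of $N$, together with the vanishing $H^1(k',\tilde G)=1$ for every finite extension $k'/k$ and every $\mathbb G_m$-quasi-trivial linear algebraic $k'$-group $\tilde G$---is precisely what is needed to trivialize this obstruction, via the abelianization techniques of Borovoi. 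This is the step where the hypothesis that $k$ is good is essential, as in the original argument of Colliot-Th\'{e}l\`{e}ne and Kunyavski\u{\i}.
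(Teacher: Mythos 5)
Your proposal is correct, but it is worth noting that the paper itself does not prove Theorem~\ref{ctkoriginal}: it simply cites \cite[Theorem~6.1]{MR2237268} (both for the flasqueness of $S_0$, via \cite[Theorem~5.1]{MR2237268}, and for the surjectivity), and the real content appears only in the stronger Theorem~\ref{ctkthm}. What you have written is essentially the paper's \emph{second} proof of Theorem~\ref{ctkthm}: reduce to $G$ being $\mathbb G_m$-quasi-trivial, replace the universal torsor by the explicit flasque quasi-resolution $(G\times_k S)/H\to X$ of Subsection~\ref{secondco} (legitimate by Proposition~\ref{nonsapreiproprio}, whose density hypothesis holds since $X$ is a homogeneous space), and factor the evaluation map as $X(k)\to \mathrm H^1(k,H)\to \mathrm H^1(k,M)\to \mathrm H^1(k,S)$, the outer two surjectivities coming from $\mathrm H^1(k,G)=1$ and $\mathrm H^1(k,E)=0$. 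The only divergence is in the packaging of the hard middle step, the surjectivity of $\mathrm H^1(k,H)\to \mathrm H^1(k,M)$: you invoke the neutrality of the non-abelian $\mathrm H^2$ obstruction attached to the twisted form of $N=\ker(H\to M)$ via Borovoi's abelianization, whereas the paper reinterprets a class $\mathcal M\in \mathrm H^1(k,M)$ as a homogeneous space under $H$ with connected geometric stabilizer $N$ of trivial toric quotient and applies Proposition~\ref{excoro} (existence of a dominating torsor over a good field) to exhibit $\mathcal M$ as a pushforward of some $\mathcal H\in \mathrm H^1(k,H)$. These are two formulations of the same input from the literature and carry the same level of detail, so I do not regard your sketch of that step as a gap; the paper's formulation has the advantage of staying entirely within the torsor-theoretic language already set up, while yours makes explicit exactly which cohomological vanishing the goodness of $k$ is buying.
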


\begin{oss}
The universal torsor is a flasque quasi-resolution of $(X_c, x_0)$, thus the surjectivity is equivalent to prove that the evaluation map induced by $Y_0\longrightarrow X_c$ a flasque quasi-resolution of $(X_c,x_0)$ is surjective. 
\end{oss}

\begin{oss}
As a corollary, the theorem gives a lower bound to the number of classes of $R$-equivalence of $X_c(k)$.
\end{oss}

In this section, we want to prove a slightly stronger result of the one above:

\begin{thm}
\label{ctkthm}
The evaluation map induced by a flasque quasi-resolution of $(X,x_0)$ is surjective if the field $k$ is good.
\end{thm}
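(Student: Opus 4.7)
The plan is to choose an explicit flasque quasi-resolution of $(X,x_0)$ whose twists are again homogeneous spaces, reducing surjectivity of the evaluation map to the existence of $k$-rational points on a family of twisted homogeneous spaces, and then to apply Proposition~\ref{excoro}. By Proposition~\ref{unicitaforte}, applied with the explicit resolution of Proposition~\ref{oralanomino} (which is itself a homogeneous space, hence has dense $k$-rational points), it suffices to prove surjectivity for one flasque quasi-resolution of our own choosing. Using \cite[Proposition~3.1]{MR2404747} we may moreover present $X = G/H$ with $G$ additionally $\mathbb{G}_m$-quasi-trivial, and then take as our flasque quasi-resolution $Y_0 := (G\times_k S_0)/H \to X$ from Section~\ref{secondco}: here $M = H^{\text{mult}}$ is the maximal torus quotient of the connected group $H$, and $1 \to M \to S_0 \to E_0 \to 1$ is a fixed flasque resolution of $M$.

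Given any class $\alpha \in \text{H}^1(k,S_0)$, let ${}_\alpha Y_0 \to X$ denote the twist of the $S_0$-torsor $Y_0 \to X$ by $\alpha$. A point $x \in X(k)$ satisfies $\ev(x) = \alpha$ precisely when the fiber $({}_\alpha Y_0)_x$ is the trivial $S_0$-torsor, equivalently when $x$ lifts to a $k$-rational point of ${}_\alpha Y_0$. Thus surjectivity of $\ev$ is equivalent to the non-emptiness of ${}_\alpha Y_0(k)$ for every $\alpha$. Picking an $S_0$-torsor $T_\alpha$ with class $\alpha$, one has explicitly ${}_\alpha Y_0 \simeq (G\times_k T_\alpha)/H$, with $H$ acting by $h\cdot(g,t) = (gh^{-1},\phi(h)\cdot t)$ for $\phi : H \to M \hookrightarrow S_0$; left multiplication of $G$ on the first factor descends to a transitive $k$-action on ${}_\alpha Y_0$, exhibiting it as a homogeneous space under $G$. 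A stabilizer computation at a $\overline{k}$-point identifies the geometric stabilizer with $H' := \ker(H \to M)$.

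The main technical point is checking that $H'$ has trivial maximal toric quotient. Using a Levi decomposition $H_{\overline{k}} = L \ltimes R_u(H_{\overline{k}})$ with $L$ connected reductive, the maximal multiplicative type quotient of $H$ is $L/[L,L]$, so $H'_{\overline{k}} = [L,L]\cdot R_u(H_{\overline{k}})$; since $[L,L]$ is semisimple and $R_u(H_{\overline{k}})$ unipotent, every character of $\overline{H'}$ is trivial, so $\widehat{H'} = 0$. We may now apply Proposition~\ref{excoro}(2): since $G$ is $\mathbb{G}_m$-quasi-trivial, the geometric stabilizer of the $G$-homogeneous space ${}_\alpha Y_0$ has trivial maximal toric quotient, and $k$ is good by hypothesis, we conclude that ${}_\alpha Y_0$ admits a $k$-rational point. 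Its image in $X$ is the desired $x \in X(k)$ with $\ev(x) = \alpha$.
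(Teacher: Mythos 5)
Your overall strategy is sound and close in spirit to the paper's two proofs (reduce to the explicit resolution $Y_0=(G\times_k S_0)/H\to X$ with $G$ being $\mathbb G_m$-quasi-trivial, then produce $k$-points on twists by invoking Proposition~\ref{excoro}), and your reformulation of surjectivity of $\mathrm{ev}$ as non-emptiness of ${}_\alpha Y_0(k)$ for all $\alpha$ is correct. But there is a genuine error at the central step: ${}_\alpha Y_0\simeq (G\times_k T_\alpha)/H$ is \emph{not} a homogeneous space under $G$. The left $G$-action preserves the natural projection $(G\times_k T_\alpha)/H\to T_\alpha/M$, $[(g,t)]\mapsto \bar t$, which is well defined because $H$ acts on $T_\alpha$ only through $\phi(H)\subseteq M$. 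Hence the $G$-orbits are contained in the fibres of this map, and $T_\alpha/M$ is a torsor under the quasi-trivial torus $E_0=S_0/M$, which has positive dimension whenever $M\subsetneq S_0$ (the typical situation, since $S_0$ is a flasque torus strictly containing $M$ in any nontrivial flasque resolution). Already for $G=\mathbb G_m$, $H=\mu_2$, $S_0=\mathbb G_m$, the variety $(G\times_k S_0)/H$ is two-dimensional while $G$ is one-dimensional. Consequently Proposition~\ref{excoro}(2) cannot be applied to ${}_\alpha Y_0$ as you do; your ``stabilizer computation'' gives $H'$ only because it implicitly assumes transitivity.

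The gap is repairable, and the repair is essentially the paper's first proof. Since $E_0$ is quasi-trivial, $\mathrm H^1(k,E_0)=0$, so the $E_0$-torsor $T_\alpha/M$ is trivial and has a $k$-point $q$. The fibre of $(G\times_k T_\alpha)/H\to T_\alpha/M$ over $q$ \emph{is} a homogeneous space under $G$ (the obstruction $m'/m\in M$ to moving between two points of a fibre is killed because $H\to M$ is surjective), with geometric stabilizer $H'=\ker(H\to M)$, whose maximal toric quotient is trivial; Proposition~\ref{excoro}(2) then yields a $k$-point of that fibre, hence of ${}_\alpha Y_0$. This fibration-over-a-quasi-trivial-quotient step is exactly what the paper encodes via $Z_0=(G\times_k P_0)/H\to Q_0$ in Propositions~\ref{nuova}--\ref{ctk2} (there with a coflasque resolution so that $Z_0\to X$ is a torsor under a quasi-trivial torus), and what the second proof encodes by factoring $\mathrm{ev}$ through the surjection $\mathrm H^1(k,H)\to\mathrm H^1(k,H^{\mathrm{tor}})$. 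Your Levi-decomposition verification that $H'$ has trivial toric quotient and your reduction to a single resolution via Proposition~\ref{nonsapreiproprio} are fine; only the transitivity claim needs to be replaced by the fibration argument above.
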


The idea of the proof is similar to the one of Colliot-Th\'{e}l\`{e}ne and Kunyavski\u{\i}, but we are going to prove it with the language of flasque quasi-resolutions. We are going to give two proof of this theorem, we start with the longer one in which we can see how to play with flasque quasi-resolutions.

Let $H^{\text{tor}}$ be the maximal toric quotient of $H$. The kernel $H'$ of $H\rightarrow H^{\text{tor}}$ is a connected linear algebraic $k$-group, whose maximal toric quotient is trivial (see Theorem 3.1 and Theorem 5.1 of \cite[Chapter XVII]{milneAGS}). Let \begin{equation*}
        \begin{tikzpicture}[baseline= (a).base]
        \node[scale=1] (a) at (0,0){
        \begin{tikzcd}
        1\arrow[r] & H^{\text{tor}} \arrow[r]& P_0\arrow[r] & Q_0\arrow[r] & 1
       \end{tikzcd}};
        \end{tikzpicture}
\end{equation*}
be a coflasque resolution of $H^{\text{tor}}$. Let 
\begin{equation*}
        \begin{tikzpicture}[baseline= (a).base]
        \node[scale=1] (a) at (0,0){
        \begin{tikzcd}
        1\arrow[r] & S_0 \arrow[r]& E_0\arrow[r] & Q_0\arrow[r] & 1
       \end{tikzcd}};
        \end{tikzpicture}
\end{equation*}
be a flasque resolution of $Q_0$. We take the following commutative diagram:
\begin{equation*}
        \begin{tikzpicture}[baseline= (a).base]
        \node[scale=1] (a) at (0,0){
        \begin{tikzcd}
        && 1\arrow[d] & 1\arrow[d]\\
        && S_0\arrow[d]\arrow[r, "\id"] & S_0\arrow[d]\\
        1\arrow[r] & H^{\text{tor}}\arrow[d, "\id"]\arrow[r] & F_0\arrow[d]\arrow[r] & E_0\arrow[d]\arrow[r] & 1\\
        1\arrow[r] & H^{\text{tor}} \arrow[r]& P_0\arrow[d]\arrow[r] & Q_0\arrow[d]\arrow[r] & 1\\
        && 1 & 1
       \end{tikzcd}};
        \end{tikzpicture}
\end{equation*}

Let $Z_0\coloneqq (G\times_k P_0)/H\rightarrow X$ be the pushforward of $G\rightarrow X$ by $H\rightarrow H^{\text{tor}} \rightarrow P_0$. 
The canonical projection $Z_0\rightarrow  Q_0$ has a structure of $G$-homogeneous space. We denote by $z_0$ the class of the identity of $G\times_k P_0$. 

\begin{prop}
\label{nuova}
The evaluation map $\emph{ev}:X(k)\rightarrow \emph{H}^1(k,S)$ is surjective if and only if the evaluation map associated to a flasque quasi-resolution of $(Z_0,z_0)$ is surjective.
\end{prop}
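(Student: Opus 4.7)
The plan is to exhibit a single flasque torus that simultaneously witnesses a flasque quasi-resolution of $(X,x_0)$ and of $(Z_0,z_0)$, and then to observe that the two evaluation maps literally have the same image. The candidate is $F_0$: the middle column $1\to S_0\to F_0\to P_0\to 1$ of the diagram exhibits $\widehat{F_0}$ as an extension of the flasque lattice $\widehat{S_0}$ by the permutation lattice $\widehat{P_0}$, which makes $F_0$ flasque; the middle row $1\to H^{\text{tor}}\to F_0\to E_0\to 1$ is therefore a flasque resolution of $H^{\text{tor}}=H^{\text{mult}}$.

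First I would feed this flasque resolution into the construction of Section \ref{secondco}. By Proposition \ref{oralanomino}, the pushforward $W:=(G\times_k F_0)/H\longrightarrow X$ of the $H$-torsor $G\to X$ along $H\to H^{\text{tor}}\to F_0$ is a flasque quasi-resolution of $(X,x_0)$ under $F_0$. Next, I would pull it back along the projection $\pi\colon Z_0\to X$. Because $\pi$ is a torsor under the quasi-trivial (hence $\mathbb G_m$-quasi-trivial) torus $P_0$, Proposition \ref{colliotprati} yields that
\begin{equation*}
V:=W\times_X Z_0\longrightarrow Z_0
\end{equation*}
is a flasque quasi-resolution of $(Z_0,z_0)$ under the same torus $F_0$. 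Since $X$ and $Z_0$ are homogeneous spaces and thus $k$-unirational, Proposition \ref{nonsapreiproprio} guarantees that $\text{ev}_X$ and $\text{ev}_{Z_0}$ may be computed through $W$ and $V$ without loss of generality.

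The equivalence of surjectivity is then a short diagram chase. Since $V\to Z_0$ is the base change of $W\to X$, for every $z\in Z_0(k)$ the fibre $V_z$ agrees with $W_{\pi(z)}$ as an $F_0$-torsor over $\spec k$, so $\text{ev}_{Z_0}=\text{ev}_X\circ \pi$ and in particular $\text{ev}_{Z_0}(Z_0(k))\subseteq \text{ev}_X(X(k))$. For the reverse inclusion I would invoke that $P_0$ is quasi-trivial, so $\mathrm H^1(k,P_0)=0$; each fibre of $\pi$ over a point of $X(k)$ is then a trivial $P_0$-torsor with a $k$-point, and $\pi\colon Z_0(k)\to X(k)$ is surjective. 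The two images therefore coincide, which is exactly the equivalence claimed.

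I do not anticipate a serious obstacle: once $F_0$ has been recognised as flasque, the rest of the argument rests on Proposition \ref{colliotprati} for the base change and Proposition \ref{nonsapreiproprio} for independence of the resolution. The conceptual point, and the whole reason to introduce $F_0$ in the first place, is to secure flasque quasi-resolutions of $X$ and of $Z_0$ landing in the same $\mathrm H^1$; using two independent flasque resolutions of $H^{\text{tor}}$ on the two sides would produce evaluation maps into different tori and defeat the comparison.
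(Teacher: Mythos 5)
Your argument runs on the same engine as the paper's proof: pull a flasque quasi-resolution back along the $P_0$-torsor $Z_0\rightarrow X$ via Proposition \ref{colliotprati}, note that the two evaluation maps then land in the same $\mathrm H^1$ and satisfy $\mathrm{ev}_{Z_0}=\mathrm{ev}_X\circ\pi$, use $\mathrm H^1(k,P_0)=0$ to get surjectivity of $Z_0(k)\rightarrow X(k)$, and invoke Proposition \ref{nonsapreiproprio} to pass from one fixed resolution to all of them. The one real divergence is your opening move: you first manufacture a specific flasque quasi-resolution $W=(G\times_k F_0)/H$ of $(X,x_0)$ via Proposition \ref{oralanomino} and only then pull back, whereas the paper pulls back the arbitrary resolution $Y\overset{S}{\longrightarrow}X$ from the statement directly -- nothing in the comparison requires knowing the resolution explicitly. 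This detour is not free: Proposition \ref{oralanomino} (and all of Subsection \ref{secondco}) is stated only for $G$ a $\mathbb G_m$-quasi-trivial group, while Proposition \ref{nuova} sits in a context where $G$ is merely connected linear; the reduction to $\mathbb G_m$-quasi-trivial $G$ is performed only later, in the first proof of Theorem \ref{ctkthm}. So, as written, your argument covers only that special case. The repair is simply to delete the construction of $W$ and run your pullback-and-chase on $Y\overset{S}{\longrightarrow}X$ itself, which is exactly the paper's proof. Your observation that $F_0$ is flasque and that the middle row is a flasque resolution of $H^{\text{tor}}$ is correct, and the variety $(G\times_k F_0)/H$ does play a role -- but in Proposition \ref{ctk3}, where the explicit resolution of $(Z_0,z_0)$ obtained from $E_0\rightarrow Q_0$ is needed, not here.
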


\begin{proof}
By Proposition \ref{nonsapreiproprio}, it is enough to prove the claim for a fixed flasque quasi-resolution of $(Z_0, z_0)$. We take the following cartesian diagram:
\begin{equation*}
        \begin{tikzpicture}[baseline= (a).base]
        \node[scale=1] (a) at (0,0){
        \begin{tikzcd}
       Y'\arrow[r]\arrow[d, "S"] & Y \arrow[d, "S"]\\
       Z_0 \arrow[r, "P_0"] & X.
       \end{tikzcd}};
        \end{tikzpicture}
\end{equation*}
By Proposition \ref{colliotprati}, the $S$-torsor $Y'\longrightarrow Z_0$ is a flasque quasi-resolution of $(Z_0,z_0)$. There is the following commutative diagram:
\begin{equation*}
        \begin{tikzpicture}[baseline= (a).base]
        \node[scale=1] (a) at (0,0){
        \begin{tikzcd}
       Z_0(k)\arrow[d]\arrow[r, "\ev_{Z_0}"] & \text{H}^1(k,S) \arrow[d, "\id"]\\
       X(k)\arrow[r, "\ev"] & \text{H}^1(k,S).
       \end{tikzcd}};
        \end{tikzpicture}
\end{equation*}
Since $Z_0 \longrightarrow X$ is a torsor under a quasi-trivial torus, the map $Z_0(k)\rightarrow X(k)$ is surjective and thus $\text{ev}_X$ is surjective if and only if $\text{ev}_{Z_0}$ is.
\end{proof}

\begin{oss}
The map $Z_0(k)\rightarrow X(k)$ induces a bijective map $Z_0(k)/R \rightarrow X(k)/R$. Thus a similar statement holds for injectivity, too.
\end{oss}

\begin{prop}
\label{ctk3}
Suppose that $G$ is $\mathbb G_m$-quasi-trivial. The pull-back of the $S_0$-torsor $E_0\longrightarrow Q_0$ by the $G$-homogeneous space $Z_0\longrightarrow Q_0$ is a flasque quasi-resolution of $(Z_0,z_0)$.
\end{prop}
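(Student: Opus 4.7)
The plan is to identify the pull-back $W := Z_0 \times_{Q_0} E_0$ as a homogeneous space under a connected linear algebraic $k$-group with trivial geometric Picard group, so that Lemma \ref{risolvetutto} applies to compute $\U(W)$ and $\pico{W}$ directly. Set $F_0 := P_0 \times_{Q_0} E_0$, which is a $k$-torus and fits into the two exact sequences $1 \to S_0 \to F_0 \to P_0 \to 1$ and $1 \to H^{\text{tor}} \to F_0 \to E_0 \to 1$ from the commutative diagram preceding the statement. A direct fibre-product computation yields
$$W = ((G \times_k P_0)/H) \times_{Q_0} E_0 \cong (G \times_k F_0)/H,$$
where $H$ acts on the second factor through $\phi : H \twoheadrightarrow H^{\text{tor}} \hookrightarrow F_0$. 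Since $F_0$ is abelian, left translation on $G \times_k F_0$ descends to a transitive action on $W$, with stabilizer $\{(h, \phi(h)^{-1}) : h \in H\} \simeq H$ at the class $[(1,1)]$, which is connected by hypothesis.

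For the triviality condition at $z_0$, the image of $z_0$ in $Q_0$ is the class of $1_{P_0}$, and the identity $1_{E_0}$ lies above it; pulling this back provides a $k$-rational point of $W$ over $z_0$.

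To check that $W$ is $\mathbb G_m$-quasi-trivial, I apply Lemma \ref{risolvetutto} to the presentation $W = (G \times_k F_0)/H$: the group $G \times_k F_0$ is connected with $\pico{G \times_k F_0} = 0$, and $\widehat H \simeq \widehat{H^{\text{tor}}}$ by Proposition \ref{pranzo}. This yields
$$\U(W) \simeq \text{Ker}(\widehat G \oplus \widehat{F_0} \to \widehat{H^{\text{tor}}}), \qquad \pico{W} \simeq \text{Coker}(\widehat G \oplus \widehat{F_0} \to \widehat{H^{\text{tor}}}).$$
Dualizing $1 \to H^{\text{tor}} \to F_0 \to E_0 \to 1$ shows that $\widehat{F_0} \to \widehat{H^{\text{tor}}}$ is surjective with kernel $\widehat{E_0}$, hence $\pico{W} = 0$ and one obtains a short exact sequence
$$0 \to \widehat{E_0} \to \U(W) \to \widehat G \to 0.$$
Both $\widehat G$ (since $G$ is $\mathbb G_m$-quasi-trivial and connected, so $\U(G) = \widehat G$ is permutation) and $\widehat{E_0}$ ($E_0$ being quasi-trivial) are permutation lattices, so \cite[Lemma 1]{requiv} splits this sequence and $\U(W) \simeq \widehat G \oplus \widehat{E_0}$ is permutation.

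Combined with the flasqueness of $S_0$ (by construction) and the $k$-rational point over $z_0$, this shows that $W \to Z_0$ is a flasque quasi-resolution of $(Z_0, z_0)$. I expect the only real obstacle to be the identification of $W$ with the homogeneous space $(G \times_k F_0)/H$ together with the correct description of its stabilizer; once this is in place, the rest is a standard lattice calculation using the two extensions defining $F_0$ and the splitting criterion for exact sequences with a permutation quotient.
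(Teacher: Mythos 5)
Your overall strategy coincides with the paper's: both identify the pull-back with the homogeneous space $(G\times_k F_0)/H$ and then reduce $\mathbb G_m$-quasi-triviality to the split extension $0\to\widehat{E_0}\to\U(W)\to\widehat G\to 0$ via \cite[Lemma 1]{requiv}. Your lattice computation is correct and slightly more streamlined than the paper's: you invoke Lemma \ref{risolvetutto} for the presentation $W=(G\times_k F_0)/H$ (exactly as the paper itself does in the analogous Proposition \ref{oralanomino}), whereas the paper's proof of Proposition \ref{ctk3} runs the Sansuc sequence for the $H$-torsor $G\times_k F_0\longrightarrow (G\times_k F_0)/H$ directly; the two routes are equivalent, since Lemma \ref{risolvetutto} is the specialization of Sansuc's sequence to homogeneous spaces. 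Your treatment of the marked point $z_0$ and of the antidiagonal stabilizer $\{(h,\phi(h)^{-1})\}$ is also correct and makes explicit something the paper leaves implicit.

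The one place where your argument is not yet a proof is precisely the step you flag at the end: the identification $Z_0\times_{Q_0}E_0\simeq (G\times_k F_0)/H$ is not a ``direct fibre-product computation,'' because quotients by a group action do not commute with fibre products for free. This identification is where the paper does essentially all of its work: it first checks that the canonical map $(G\times_k F_0)/H\rightarrow Z_0\times_{Q_0}E_0$ is bijective on $\overline k$-points (surjectivity by lifting through $F_0\to P_0\times_{Q_0}E_0$, injectivity by chasing the two projections), and then upgrades bijectivity to an isomorphism of schemes by showing the map is birational (\cite[Proposition 7.16]{MR1182558}), hence an open immersion by Zariski's Main Theorem (\cite[Corollary 4.6]{MR1917232}), hence an isomorphism. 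You should either reproduce such an argument or give an alternative descent argument; as written, the claim that the identification is immediate leaves the central step of the proposition unproven, and you cannot apply Lemma \ref{risolvetutto} to $W$ until it is established.
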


\begin{proof}
First of all, we show that the following diagram is cartesian:
\begin{equation*}
        \begin{tikzpicture}[baseline= (a).base]
        \node[scale=1] (a) at (0,0){
        \begin{tikzcd}
       (G\times_k F_0)/H \arrow[r]\arrow[d]& E_0 \arrow[d]\\
       Z_0\arrow[r] & Q_0,
       \end{tikzcd}};
        \end{tikzpicture}
\end{equation*}
where $(G\times_k F_0)/H \rightarrow E_0$ and $(G\times_k F_0)/H\rightarrow Z_0$ are the canonical maps. We prove that
\begin{equation*}
        \begin{tikzpicture}[baseline= (a).base]
        \node[scale=1] (a) at (0,0){
        \begin{tikzcd}
       (G\times_k F_0)/H \arrow[r] & Z_0\times_{Q_0} E_0
       \end{tikzcd}};
        \end{tikzpicture}
\end{equation*}
is a bijection on $\overline{k}$-points:
\begin{itemize}
    \item The map is surjective. We take $\left ([(g, p)], e\right)\in (Z_0\times_{Q_0} E_0)(\overline{k})$. Let $r\in F_0(\overline{k})$ be an element that is mapped to $e$ and $p$ by $F_0\rightarrow E_0$ and $F_0\rightarrow P_0$, respectively. By construction, the element $[(g,r)]$ is mapped to $([(g, p)], e)$.
    \item The map is injective. Let $[(g,r)]$ and $[(g',r')]$ be in $\left ((G\times_k F_0)/H\right)(\overline{k})$ with the same image in $(Z_0\times_{Q_0} E_0)(\overline{k})$. If we compose with the projection $Z_0\times_{Q_0} E_0\rightarrow E_0$, then we get that $r$ and $r'$ differ by the action of an element of $H(\overline{k})$, and thus we can suppose that $r=r'$. If we compose with the projection $Z_0\times_{Q_0} E_0\rightarrow Z_0$, then we get that $(g,r)$ and $(g',r)$ differ by the action of an element of $\text{Ker}(H(\overline{k})\rightarrow H^{\text{tor}}(\overline{k}))$, and thus $[(g,r)]=[(g',r)]$. 
\end{itemize}
The map 
\begin{equation*}
        \begin{tikzpicture}[baseline= (a).base]
        \node[scale=1] (a) at (0,0){
        \begin{tikzcd}
       (G\times_k F_0)/H \arrow[r] & Z_0\times_{Q_0} E_0
       \end{tikzcd}};
        \end{tikzpicture}
\end{equation*}
is an isomorphism, too. Indeed, by Galois descend, it is enough to prove it over $\overline{k}$. By \cite[Proposition 7.16]{MR1182558} the map is birational, and thus, by Zariski's `Main Theorem' (see \cite[Corollary 4.6]{MR1917232}), the map is an open immersion. Since the map is bijective over $\overline{k}$-points, the map is an isomorphism. We apply the exact sequence of Sansuc to the $H$-torsor $G\times_k F_0 \longrightarrow(G\times_k F_0)/H$ to prove that $(G\times_k F_0)/H$ is $\mathbb G_m$-quasi-trivial.
\begin{equation*}
        \begin{tikzpicture}[baseline= (a).base]
        \node[scale=1] (a) at (0,0){
        \begin{tikzcd}
       0 \arrow[r] & \U((G\times_k F_0)/H)\arrow[r] & \U(G\times_k F_0)\arrow[r] & \widehat H,
       \end{tikzcd}};
        \end{tikzpicture}
\end{equation*}
and
\begin{equation*}
        \begin{tikzpicture}[baseline= (a).base]
        \node[scale=1] (a) at (0,0){
        \begin{tikzcd}
       \widehat H \arrow[r] & \pico {(G\times_k F_0)/H}\arrow[r] & \pico {G\times_k F_0}.
       \end{tikzcd}};
        \end{tikzpicture}
\end{equation*}
By Lemma \ref{san}, the group $\pico {G\times_k F_0}\simeq \pico G \oplus \pico {F_0}$ is trivial, since $G$ is $\mathbb G_m$-quasi-trivial and $F_0$ is a torus. If we prove that $\U(G\times_k F_0)\rightarrow \widehat H$ is surjective with a permutation module as kernel, then we get the statement. By Lemma \ref{rosi}, there is the identity $\U(G\times_k F_0)\simeq \widehat G\oplus \widehat {F_0}$, that implies the surjectivity of the map. The kernel $\U((G\times_k F_0)/H)$ fits into the following commutative diagram with exact rows:
\begin{equation*}
        \begin{tikzpicture}[baseline= (a).base]
        \node[scale=1] (a) at (0,0){
        \begin{tikzcd}
       0 \arrow[r]& \widehat {E_0} \arrow[r]\arrow[d, "\id"]& \U\left((G\times_k F_0)/H\right)\arrow[r]\arrow[d]& \widehat G \arrow[d]\arrow[r]& 0 \\
       0 \arrow[r] & \widehat{E_0} \arrow[r] & \widehat{F_0} \arrow[r]& \widehat H\arrow[r] & 0.
       \end{tikzcd}};
        \end{tikzpicture}
\end{equation*}
The first exact sequence is split, by \cite[Lemma 1]{requiv}. Since $\widehat {E_0}$ and $\widehat G$ are permutation lattices, we can conclude that $\U((G\times_k F_0)/H)$ is, too.
\end{proof}

\begin{oss}
In the previous proposition we cannot apply directly Proposition \ref{colliotprati}, because the $G$-homogeneous space $Z_0\longrightarrow Q_0$ is not principal.
\end{oss}

\begin{prop}
\label{ctk2}
Suppose that $k$ is good and that $G$ is $\mathbb G_m$-quasi-trivial. Let $Y\overset{S}{\longrightarrow} Z_0$ be a flasque quasi-resolution of $(Z_0,z_0)$. The evaluation map
\begin{equation*}
        \begin{tikzpicture}[baseline= (a).base]
        \node[scale=1] (a) at (0,0){
        \begin{tikzcd}
       Z_0(k) \arrow[r, "\eve_{Z_0}"] &  \emph{H}^1(k,S)
       \end{tikzcd}};
        \end{tikzpicture}
\end{equation*}
is surjective.
\end{prop}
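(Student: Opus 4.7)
The plan is to replace the flasque quasi-resolution $Y\overset{S}{\longrightarrow}Z_0$ with the specific one constructed in Proposition \ref{ctk3}, namely the pullback of $E_0\longrightarrow Q_0$ along the structural morphism $Z_0\longrightarrow Q_0$. By Proposition \ref{nonsapreiproprio} the evaluation map is, up to canonical isomorphism of the target, independent of this choice, so it suffices to establish the surjectivity for this particular flasque quasi-resolution.

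From the cartesian diagram of Proposition \ref{ctk3},
\[
\begin{tikzcd}
(G\times_k F_0)/H \arrow[r]\arrow[d, "S_0"'] & E_0 \arrow[d, "S_0"]\\
Z_0 \arrow[r] & Q_0,
\end{tikzcd}
\]
and the functoriality of the connecting map, the evaluation $\ev_{Z_0}\colon Z_0(k)\to \Hh^1(k,S_0)$ factors as
\[
Z_0(k) \longrightarrow Q_0(k) \xrightarrow{\;\partial\;} \Hh^1(k,S_0),
\]
where $\partial$ is the connecting map of the flasque resolution $1\to S_0\to E_0\to Q_0\to 1$. Since $\widehat{E_0}$ is a permutation lattice (it is the middle term of the flasque resolution of $\widehat{Q_0}$), the torus $E_0$ is quasi-trivial, so $\Hh^1(k,E_0)=0$ by Hilbert~90 and Shapiro's lemma; hence $\partial$ is surjective, and it remains to show that $Z_0(k)\to Q_0(k)$ is surjective.

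The projection $Z_0\longrightarrow Q_0$ is $G$-equivariant with trivial action on $Q_0$, and a direct computation from the presentation $Z_0=(G\times_k P_0)/H$ identifies the geometric fiber with $G/H'$ for $H'=\ker(H\to H^{\text{tor}})$. By construction, and by the results on the structure of connected linear algebraic groups cited right before the theorem, the maximal toric quotient of $H'$ is trivial; since this is a geometric property, it is preserved by twisting. Therefore, for every $q\in Q_0(k)$ the fiber $Z_{0,q}$ is a homogeneous space under the $\mathbb G_m$-quasi-trivial connected linear $k$-group $G$ whose connected $k$-stabilizer is a form of $H'$ with trivial maximal toric quotient. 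The second item of Proposition \ref{excoro}, applied since $k$ is good, then yields $Z_{0,q}(k)\neq\emptyset$, and surjectivity follows.

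The principal subtlety is the choice of the pullback flasque quasi-resolution in the first step, which forces the evaluation map to factor through $Q_0(k)$; once this factorisation is in place the remainder of the proof is driven entirely by the goodness hypothesis applied fiberwise via Proposition \ref{excoro}, together with Hilbert~90 for the quasi-trivial torus $E_0$.
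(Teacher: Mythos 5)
Your proposal is correct and follows essentially the same route as the paper: reduce to the pullback resolution of Proposition \ref{ctk3} via Proposition \ref{nonsapreiproprio}, factor the evaluation through $Q_0(k)\rightarrow \mathrm H^1(k,S_0)$ (surjective since $\mathrm H^1(k,E_0)=0$), and obtain surjectivity of $Z_0(k)\rightarrow Q_0(k)$ by applying Proposition \ref{excoro} to each fibre, which is a homogeneous space under the $\mathbb G_m$-quasi-trivial group $G$ with connected stabilizer having trivial maximal toric quotient.
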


\begin{proof}
By Proposition \ref{nonsapreiproprio}, it is enough to prove the statement for a fixed flasque quasi-resolution of $(Z_0,z_0)$. Let $Y_0 \longrightarrow Z_0$ be the pull-back of the $S_0$-torsor $E_0\longrightarrow Q_0$ by $Z_0\rightarrow Q_0$. By Proposition \ref{ctk3}, it is a flasque quasi-resolution of $(Z_0,z_0)$. We take the commutative diagram
\begin{equation*}
        \begin{tikzpicture}[baseline= (a).base]
        \node[scale=1] (a) at (0,0){
        \begin{tikzcd}
       Z_0(k)\arrow[r, "\ev_{Z_0}"]\arrow[d] & \text{H}^1(k,S_0)\arrow[d, "\id"]\\
       Q_0(k)\arrow[r, "\ev_{Q_0}"] & \text{H}^1(k,S_0).
       \end{tikzcd}};
        \end{tikzpicture}
\end{equation*}
The map $Q_0(k)\rightarrow \text{H}^1(k,S_0)$ is surjective, since $\text{H}^1(k,E_0)$ is trivial. If we show that $Z_0(k)\rightarrow Q_0(k)$ is surjective, then we get the statement. Let $q \in Q_0(k)$. We take the fiber product 
\begin{equation*}
        \begin{tikzpicture}[baseline= (a).base]
        \node[scale=1] (a) at (0,0){
        \begin{tikzcd}
       (Z_0)_q\arrow[r]\arrow[d] & Z_0\arrow[d] \\
       \text{Spec}\ k \arrow[r, "q"]& Q_0.
       \end{tikzcd}};
        \end{tikzpicture}
\end{equation*}
We have to prove that $(Z_0)_q(k)$ is non-empty. By computation, the stabilizer of $z_0$ is the kernel $H'$ of $H\rightarrow H^{\text{tor}}$. In particular, it is connected and its maximal toric quotient is trivial. Thus $Z_0\rightarrow Q_0$ is a $G$-homogeneous space, whose geometric stabilizer is connected and have a trivial toric quotient. The same hold for $(Z_0)_q\rightarrow \text{Spec}\ k$ and so, by Proposition \ref{excoro}, $(Z_0)_q(k)$ is non-empty.
\end{proof}

\begin{oss}
In the previous proposition, the surjectivity of $\ev_{Z_0}$ is equivalent to the surjectivity of $Z_0(k)\rightarrow Q_0(k)/R$, since $Q_0(k)/R\rightarrow \text{H}^1(k, S_0)$ is bijective. 
\end{oss}

\begin{proof}[First proof of Proposition \ref{ctkthm}]
By \cite[Lemma 1.5]{MR2237268}, we can suppose $G$ is $\mathbb G_m$-quasi-trivial. We get the claim by Proposition \ref{nuova} and Proposition \ref{ctk2}
\end{proof}

Now we propose the second proof:

\begin{proof}[Second proof of Proposition \ref{ctkthm}]
Without loss of generality, we can assume that $G$ is $\mathbb G_m$-quasi-trivial, and that the flasque quasi-resolution is the one of Subsection \ref{secondco}. Then, it is enough to prove that the map $\Hh^1(k,H)\rightarrow \Hh^1(k,H^{\text{tor}})$ is surjective. Let $\mathcal M\in \text{H}^1(k,H^{\text{tor}})$. By Proposition \ref{excoro} and the fact that $H'\coloneqq \text{Ker} (H\rightarrow H^{\text{tor}})$ has trivial maximal toric quotient, there exists a dominant $H$-equivariant map $\mathcal H\rightarrow \mathcal M$, where $\mathcal H\in \text{H}^1(k,H)$. We note that $\mathcal H \longrightarrow \mathcal M$ is a $H'$-torsor, indeed it is enough to check it on the algebraic closure. Moreover, the map $\mathcal H\longrightarrow (\mathcal H\times_k H^{\text{tor}})/ H$ is an $H'$-torsor, too. By the universal property they are isomorphic and there is a commutative diagram. In particular, $\mathcal M$ is the pushforward of $\mathcal H$ by $H\rightarrow H^{\text{tor}}$, and thus the map is surjective.  
\end{proof}

\appendix

\section{On algebraic groups and torsors}

The results and definitions contained in this section are classical. We reference them because they will be used multiple times in the paper. Of particular interest for us is an exact sequence attributed to Sansuc. For our purpose, we need a more explicit construction than the original one. The second part of the section is dedicated to doing so.

The following lemma and proposition are due to Rosenlicht:

\begin{lemma}
\label{rosi}
Let $X$ and $Y$ be two algebraic $k$-varieties. 
\begin{itemize}
    \item $\Ue(X)$ is a free finitely generated group.
    \item the map $\Ue(X)\oplus \Ue(Y) \rightarrow \Ue(X\times_k Y)$ is an isomorphism.
\end{itemize}
\end{lemma}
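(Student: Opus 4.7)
The plan is to prove Rosenlicht's classical theorem in two stages. Since both sides of the isomorphism are defined purely in terms of the base change to $\overline k$, I may and will assume throughout the proof that $k$ is algebraically closed, working with $k[X]^*/k^*$ instead of $\U(X)$.

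For the first bullet, I would use a compactification argument. By Nagata's theorem combined with normalization, $X$ embeds as an open subvariety of a normal proper integral variety $\widetilde X$. Any $f\in k[X]^*$ extends to a rational function on $\widetilde X$ whose divisor is supported on the boundary $D = \widetilde X \setminus X$, which has only finitely many irreducible components $D_1,\dots,D_r$. The divisor map
\[
k[X]^* \longrightarrow \bigoplus_{i=1}^{r}\mathbb Z, \qquad f\mapsto (\mathrm{ord}_{D_i}f)_i,
\]
has kernel equal to $k[\widetilde X]^*$, and since $\widetilde X$ is normal, proper and integral over an algebraically closed field, this kernel is $k^*$. Thus $k[X]^*/k^*$ embeds in $\mathbb Z^r$, which is free of finite rank; so is any subgroup.

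For the second bullet, injectivity of $\U(X)\oplus \U(Y)\to \U(X\times_k Y)$ is immediate by restricting to a slice $\{x_0\}\times Y$ and $X\times\{y_0\}$. For surjectivity, I fix closed points $x_0\in X(k)$, $y_0\in Y(k)$, and for $h\in k[X\times_k Y]^*$ define $f(x) = h(x,y_0)/h(x_0,y_0)$ and $g(y) = h(x_0,y)$. The function $\phi(x,y) = h(x,y)/(f(x)g(y))$ then lies in $k[X\times_k Y]^*$ and satisfies $\phi(x_0,\cdot)=1$, $\phi(\cdot,y_0)=1$. It remains to show $\phi\equiv 1$. For each $x\in X(k)$ the restriction $\phi(x,\cdot)$ defines a class in $\U(Y)$, yielding a map $\Phi\colon X(k)\to \U(Y)$; the idea is that, since $\U(Y)$ is a discrete finitely generated free abelian group (by the first bullet), $\Phi$ is locally constant in the Zariski topology. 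Concretely one sees this by compactifying $Y$ to a normal proper $\widetilde Y$, extending $\phi$ to a rational function on $X\times\widetilde Y$, and tracking the divisors along the components of the boundary $\widetilde Y\setminus Y$: these divisors restrict to locally constant integer-valued functions on $X$. Because $X$ is integral and hence connected, $\Phi$ is constant, and since $\Phi(x_0)=0$, it is identically trivial. Therefore $\phi(x,y)=c(x)$ for some $c\in k[X]^*$; the condition $\phi(x,y_0)=1$ forces $c\equiv 1$, completing the proof.

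The principal obstacle is the rigidity statement for $\Phi$: one must verify that as $x$ varies, the class $[\phi(x,\cdot)]\in \U(Y)$ cannot jump. The argument above, based on the fact that the orders of vanishing of $\phi$ along the boundary components of a fixed compactification $\widetilde Y$ are upper semicontinuous and, being $\mathbb Z$-valued, locally constant on the integral base $X$, is the technical heart of Rosenlicht's theorem, and once it is in place both bullets follow formally.
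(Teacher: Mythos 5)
The paper does not actually prove this lemma: it simply cites Rosenlicht (\cite[Theorems 1 and 2]{MR133328}) and Conrad's notes. Your proposal reconstructs the classical Rosenlicht argument itself --- normal compactification, divisors supported on the boundary for the first bullet, and the ``slice-normalization'' $h=fg\phi$ with a rigidity argument for $\phi$ in the second --- which is exactly the route taken in the cited sources, so the strategy is sound and there is nothing to compare at the level of method beyond ``citation versus proof from scratch.''

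Two points in your write-up need repair, though both are local. First, the varieties of this paper are only assumed geometrically integral, not normal, so $X$ need not embed as an open subvariety of the normalization of a Nagata compactification; what you get is an open immersion of the normalization $X^{\nu}$ of $X$. This costs nothing for the first bullet because $k[X]^{*}\subset k[X^{\nu}]^{*}$ (the normalization map is dominant with $\mathcal O_X\hookrightarrow \nu_*\mathcal O_{X^{\nu}}$), so $\U(X)$ embeds into $\U(X^{\nu})$ and hence into $\mathbb Z^{r}$; but you should say this, and in the second bullet you should likewise replace $X$ and $Y$ by their normalizations (or smooth loci) before forming $X\times_k\widetilde Y$, since you need that product to be normal to speak of $\mathrm{ord}$ along $X\times E_j$. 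Second, the assertion that the functions $x\mapsto \mathrm{ord}_{E_j}\bigl(\phi(x,\cdot)\bigr)$ are ``upper semicontinuous and, being $\mathbb Z$-valued, locally constant'' is a non sequitur as written: an upper semicontinuous integer-valued function on an irreducible variety is generically constant but may jump on closed subsets. The fix is the standard one and uses crucially that $\phi$ is a \emph{unit} on all of $X\times_k Y$: apply the same semicontinuity to $\phi^{-1}$, whose order along $E_j$ is the negative of that of $\phi$, so the order function is both upper and lower semicontinuous, hence locally constant. With these two emendations your proof is complete and agrees with the classical argument the paper invokes.
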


\begin{proof}
For the first item see \cite[Theorem 1]{MR133328}. For the second item see \cite[Theorem 2]{MR133328} or \cite[Theorem 4.1]{noteconrad}.
\end{proof}

\begin{prop}
\label{rosigene}
Let $G$ be a connected linear algebraic $k$-group, and let $T$ be an algebraic torus defined over $k$. Let $f:G\rightarrow T$ be a morphism of $k$-schemes. There is a commutative diagram:
\begin{equation*}
        \begin{tikzpicture}[baseline= (a).base]
        \node[scale=1] (a) at (0,0){
        \begin{tikzcd}
        G \arrow[r, "\chi"] \arrow[rd, "f"']& T\arrow[d]\\
          & T,
       \end{tikzcd}};
        \end{tikzpicture}
\end{equation*}
such that the vertical map is the multiplication by an element of $T(k)$ and $\chi$ is a morphism of group $k$-schemes.
\end{prop}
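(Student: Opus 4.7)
The plan is to translate $f$ so that it sends $1_G$ to $1_T$, and then to show that this translate is a morphism of group $k$-schemes. Setting $t_0 \coloneqq f(1_G) \in T(k)$, I would define $\chi \colon G \to T$ by $\chi(g) = t_0^{-1} \cdot f(g)$, using the group law on $T$. By construction $\chi(1_G) = 1_T$, and $f$ factors as $\chi$ followed by the translation $t \mapsto t_0 \cdot t$, which yields precisely the commutative triangle in the statement with vertical map a multiplication by an element of $T(k)$.

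It then remains to show that $\chi$ respects the group structure. The natural object to study is the $k$-morphism $\phi \colon G\times_k G \to T$ defined on functor points by $\phi(g,h) = \chi(gh)\cdot\chi(g)^{-1}\cdot\chi(h)^{-1}$; a direct computation shows $\phi(1_G,h) = \phi(g,1_G) = 1_T$. The goal is to prove that $\phi$ is constantly $1_T$. By faithfully flat descent it suffices to check this after base change to $\bar k$, and since $\overline T$ is isomorphic over $\bar k$ to a product of copies of $\mathbb G_{m,\bar k}$, a morphism $\overline{G\times_k G} \to \overline T$ vanishes identically if and only if $\lambda\circ\phi$ is trivial for every character $\lambda \in \widehat T$.

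Fix such a $\lambda$. Then $\lambda\circ \phi$ is a unit in $\bar k[\overline{G\times_k G}]^*$, and Lemma \ref{rosi} tells us that the natural map $\U(G)\oplus \U(G) \to \U(G\times_k G)$ is an isomorphism. Choosing suitable representatives, we may write $\lambda(\phi(g,h)) = c \cdot u_1(g) \cdot u_2(h)$ for some constant $c \in \bar k^*$ and units $u_1,u_2 \in \bar k[\overline G]^*$. Evaluating at $h = 1_G$ gives $1 = c \cdot u_1(g) \cdot u_2(1_G)$, forcing $u_1$ to be constant; symmetrically $u_2$ is constant, so $\lambda\circ\phi$ itself is constant, and evaluating at $(1_G, 1_G)$ shows this constant is $1$.

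The only real subtlety is the reduction to checking triviality character by character, combined with Rosenlicht's splitting of units on a product; once those two ingredients are in place there is essentially nothing else to do, which is why the result is attributed to Rosenlicht in the first place.
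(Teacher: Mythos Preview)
Your proof is correct and follows essentially the same route as the paper: translate so that $f$ fixes the identity, then invoke Rosenlicht's unit theorem to show the resulting map is a homomorphism. The paper simply outsources the second step to \cite[Corollary 1.2]{noteconrad}, whereas you unpack that argument explicitly using Lemma~\ref{rosi}; the content is the same.
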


\begin{proof}
We can suppose that $f$ fixes the identity. The claim follows by \cite[Corollary 1.2]{noteconrad}.
\end{proof}

Under the appropriate hypotheses, the Picard group is additive:

\begin{lemma}
\label{san}
Let $X$ and $Y$ be smooth algebraic $k$-varieties. We assume that $Y(k)$ is non-empty, and that $Y$ is $\overline{k}$-rational. Then, the canonical map $\emph{Pic}(X) \oplus \emph{Pic}( Y)\rightarrow \emph{Pic} (X\times_k Y)$ is an isomorphism.  
\end{lemma}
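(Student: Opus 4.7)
The plan is to show that the pullback map
\[
\phi \colon \pic(X) \oplus \pic(Y) \longrightarrow \pic(X \times_k Y), \qquad (L_X, L_Y) \mapsto p_1^* L_X \otimes p_2^* L_Y
\]
is an isomorphism, where $p_1, p_2$ denote the two projections. The $k$-point $y_0 \in Y(k)$ furnishes the section $s \colon X \to X \times_k Y$, $x \mapsto (x, y_0)$, of $p_1$, and this $s$ will be the geometric engine of both halves of the argument.

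For injectivity, I would pull back along $s$: one has $s^* \circ p_1^* = \id_{\pic(X)}$, whereas $s^* \circ p_2^*$ factors through $\pic(\spec k) = 0$. Hence $\phi(L_X, L_Y) = 0$ gives immediately $L_X = 0$ and $p_2^* L_Y = 0$. To deduce $L_Y = 0$ I would base-change to $\overline k$ and use the geometric integrality of $X$ to select an $\overline k$-point of $\overline X$; the analogous retraction on the geometric side forces the class of $L_Y$ to become trivial in $\pico Y$. The kernel of $\pic(Y) \to \pico Y$ can then be handled by standard Galois descent, again leaning on the fact that the $k$-point $y_0$ splits the relevant cohomological obstruction.

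For surjectivity, given $L \in \pic(X \times_k Y)$, I would set $L_X := s^* L \in \pic(X)$ and replace $L$ by $L \otimes p_1^* L_X^{-1}$, reducing to the case where $L$ restricts trivially to $X \times \{y_0\}$. It then remains to exhibit $L_Y \in \pic(Y)$ with $L = p_2^* L_Y$. The crucial use of $\overline k$-rationality of $Y$ is that, on a smooth compactification $Y_c$ of $Y$ (which exists by Hironaka), the abelian variety $\text{Pic}^0(\overline{Y_c})$ vanishes, since $\text{Pic}^0$ is a birational invariant of smooth proper varieties and is trivial on $\mathbb P^n_{\overline k}$. The see-saw principle then produces the geometric decomposition
\[
\pic(\overline X \times_{\overline k} \overline Y) \simeq \pico X \oplus \pico Y,
\]
and the existence of $y_0$ provides the rigidity required to descend this decomposition to $k$.

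The main obstacle will be the descent from $\overline k$ to $k$: taking $\Gamma_k$-invariants is only left exact, so a priori one gains only an inclusion. The decisive feature is that $y_0 \in Y(k)$ gives splittings of the relevant short exact sequences already at the level of cocycles, so the $H^1$-obstruction in Galois cohomology disappears identically. An alternative, more uniform route would be to analyse the Leray spectral sequence for $p_2 \colon X \times_k Y \to Y$ with coefficients in $\mathbb G_m$, using rationality of $Y$ to control the presheaf $U \mapsto \pic(X \times_k U)$ and using $s$ to split the low-degree terms; this is essentially the approach in Sansuc's original treatment.
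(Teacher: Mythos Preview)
The paper does not give its own argument here; it simply cites Sansuc \cite[Lemma 6.6]{MR631309}. So there is no in-paper proof to match against, only Sansuc's original one, which proceeds via the Leray spectral sequence for $p_2$ with coefficients in $\mathbb G_m$ --- precisely the ``alternative, more uniform route'' you mention at the end. That route is the one that actually works cleanly in this generality.

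Your primary outline, by contrast, has a real gap in the surjectivity half. You invoke the see-saw principle to obtain the geometric decomposition $\pic(\overline X\times_{\overline k}\overline Y)\simeq\pico X\oplus\pico Y$, but see-saw requires one factor to be proper, and neither $X$ nor $Y$ is assumed so. Passing to a compactification $Y_c$ only moves the problem: your line bundle $L$ lives on $X\times_k Y$, not on $X\times_k Y_c$, and extending it is exactly the nontrivial step. The honest argument over $\overline k$ uses that $\overline Y$ contains an open $U$ isomorphic to an open of $\mathbb A^n$, so that $\pic(\overline X\times U)=\pic(\overline X)$ by homotopy invariance, and then compares via the divisor localization sequences for $U\subset\overline Y$ and $\overline X\times U\subset\overline X\times\overline Y$. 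This replaces see-saw entirely and is what underlies Sansuc's spectral-sequence computation of $R^1(p_2)_*\mathbb G_m$.

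Your injectivity sketch is closer to correct but still loose. The claim that ``$y_0$ splits the relevant cohomological obstruction'' is not quite what happens: the kernel of $\pic(Y)\to\pico Y$ is $\Hh^1(k,\overline k[\overline Y]^*)$, which even with $y_0$ equals $\Hh^1(k,\U(Y))$ and need not vanish. What one actually needs is that $p_2^*$ is injective on these $\Hh^1$ terms, and this comes from analysing the short exact sequence $0\to\overline k[\overline Y]^*\to\overline k[\overline X\times\overline Y]^*\to\U(X)\to 0$ and observing that its connecting map factors through $\Hh^1(k,\overline k^*)=0$ by Hilbert~90. Once you say that, the descent goes through.
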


\begin{proof}
See \cite[Lemma 6.6]{MR631309}.
\end{proof}

The previous lemma will be applied when $Y$ is a linear algebraic $k$-group, that is why we need the following lemma:

\begin{lemma}
\label{richiami}
Let $G$ be a connected linear algebraic $k$-group. The underlying variety of $G$ is $k$-unirational and $\overline{k}$-rational.
\end{lemma}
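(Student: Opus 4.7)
The plan is to prove the two assertions separately, both being classical consequences of the structure theory of linear algebraic groups in characteristic zero; the canonical reference is Borel's \emph{Linear Algebraic Groups}, Theorem~18.2.

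For $\overline{k}$-rationality I would pass to $\overline{k}$ and invoke the Bruhat decomposition of the now-split group $G_{\overline{k}}$. Fixing a pair of opposite Borel subgroups $B^{\pm} \subset G_{\overline{k}}$ with common maximal torus $T$ and unipotent radicals $U^{\pm}$, the multiplication morphism $U^{-} \times T \times U \to G_{\overline{k}}$ is an open immersion onto the \emph{big cell}. In characteristic zero the algebraic exponential identifies each $U^{\pm}$ with an affine space, and a split torus is an open subscheme of an affine space; hence the big cell, and with it $G$, is $\overline{k}$-rational.

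For $k$-unirationality I would first record the two atomic cases. In characteristic zero a unipotent connected $k$-group is $k$-isomorphic to affine space via the algebraic exponential, hence $k$-rational. A $k$-torus $T$ is $k$-unirational: choose a finite Galois extension $l/k$ splitting $T$; then the Weil restriction $R_{l/k}(T_l)$ is a quasi-trivial torus, hence $k$-rational as an open subscheme of $\mathbb A^N_k$, and the norm morphism $R_{l/k}(T_l) \to T$ is surjective on geometric points, providing a dominant $k$-morphism from a $k$-rational source.

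For general connected $G$ I would use the Levi decomposition (Mostow, valid in characteristic zero) to write $G \simeq L \ltimes R_u(G)$ with $L$ reductive, so that as a $k$-variety $G \simeq L \times R_u(G)$; since $R_u(G)$ is $k$-rational this reduces the claim to the reductive case. For reductive $L$ I would select a maximal $k$-torus $T$, a Borel subgroup $B$ of $L_{\overline{k}}$ containing $T_{\overline{k}}$ and defined over a finite Galois splitting extension $l/k$, and combine the Bruhat big-cell multiplication $B^{-} \times B \to L_l$ with Weil restriction and descent to produce a dominant $k$-morphism onto $L$ whose source involves the Weil restrictions $R_{l/k}(B^{\pm})$, which are $k$-unirational by the tori-and-unipotent paragraph already dispatched. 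The main obstacle is precisely this descent step: the Bruhat multiplication naturally lives over $l$ and lands in $L_l$ rather than in $L$ itself, so producing an honest dominant $k$-morphism to $L$ requires the careful Weil-restriction and faithfully flat descent arguments worked out in Borel's proof of Theorem~18.2.
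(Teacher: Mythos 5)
The paper's own proof is a bare citation of Borel, Theorem 18.2 (for $k$-unirationality) and Corollary 14.14 (for $\overline{k}$-rationality), so leaning on that reference is exactly right, and most of your sketch is a faithful expansion of the classical arguments: the big cell $U^-\times T\times U$ gives $\overline{k}$-rationality, the Levi decomposition reduces $k$-unirationality to the reductive case, unipotent groups are $k$-rational via the exponential, and tori are $k$-unirational via the norm map from $R_{l/k}(T_l)$.

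The one step that does not work as written is your treatment of the reductive case. You propose to Weil-restrict the Bruhat big cell from the splitting field $l$ and then ``descend'' to a dominant morphism onto $L$; but there is no analogue of the norm morphism $R_{l/k}(L_l)\to L$ for non-commutative $L$, and the big-cell multiplication is not Galois-equivariant (the Galois group moves $B$ to other Borel subgroups and permutes the factors of $R_{l/k}(L_l)\times_k l\simeq\prod_\sigma {}^{\sigma}L$, while multiplication in $L$ is order-sensitive), so no dominant $k$-morphism to $L$ is produced this way. This is also not what Borel's proof does. The actual argument (Borel 18.1--18.2) is: a connected group generated by finitely many connected $k$-subgroups, each $k$-unirational, is itself $k$-unirational, because the product morphism from a suitable finite product of them is dominant and defined over $k$; and a reductive group is generated by its maximal tori defined over $k$ (which exist by Grothendieck's theorem), each of which is $k$-unirational by the norm argument you already gave. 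Substituting that generation argument for the Bruhat/Weil-restriction step closes the gap; alternatively, for the purposes of this lemma the bare citation suffices, as in the paper.
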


\begin{proof}
See \cite[Theorem 18..2]{Borel} and \cite[Corollary 14.14]{Borel}.
\end{proof}

There is a natural way to associate a $\Gamma_k$-morphism to a torsor. Such a morphism will appear in the exact sequence of Sansuc and it will be used for the definition of universal torsor. Let $X$ be an algebraic $k$-variety, and let $G$ be a linear algebraic $k$-group. We are going to define a map:
\begin{equation*}
        \begin{tikzpicture}[baseline= (a).base]
        \node[scale=1] (a) at (0,0){
        \begin{tikzcd}
       \Hh^1(X,G) \arrow[r, "t"] & \text{Hom}_{\Gamma_k}\left(\widehat G, \pico X\right).
       \end{tikzcd}};
        \end{tikzpicture}
\end{equation*}
Let $Y\longrightarrow X$ be a torsor under a linear algebraic $k$-group $G$, and let $\chi\in \widehat G$. We consider over $\overline k$ the pushforward of the torsor by the character $\chi$:
\begin{equation*}
        \begin{tikzpicture}[baseline= (a).base]
        \node[scale=1] (a) at (0,0){
        \begin{tikzcd}
       \overline{Y}\arrow[r]\arrow[d, "G"] & (\overline{Y}\times_{\overline{k}}\mathbb G_{m,\overline{k}})/\overline{G}\arrow[ld, "\mathbb G_m"]\\
       \overline{X},
       \end{tikzcd}};
        \end{tikzpicture}
    \end{equation*}
where the action of $\overline{G}$ on $\overline{Y}\times_{\overline{k}}\mathbb G_{m,\overline{k}}$ is:
    \begin{equation*}
        \begin{tikzpicture}[baseline= (a).base]
        \node[scale=1] (a) at (0,0){
        \begin{tikzcd}
       ((y,r),g)\arrow[r]& (y\cdot g, \chi(g^{-1})r).
       \end{tikzcd}};
        \end{tikzpicture}
\end{equation*}
Then, the image of the of the class of the $G$-torsor $Y\longrightarrow X$ by $t$ is the class of the pushforward of the $\overline{G}$-torsor $\overline{Y}\longrightarrow \overline{X}$ by $\chi$. We have just defined a morphism in $\text{Hom}_{\Gamma_k}\left(\widehat G, \pico X\right)$. Indeed, the map is $\Gamma_k$-equivariant because the objects are defined over $k$. To prove that the map is a group morphism, it is sufficient to express the torsors in terms of cocycles and to perform the computation.

\begin{Def}
The image by $t$ of a $G$-torsor is called the $type$ of the torsor.
\end{Def}

\begin{oss}
The notion of $type$ of a $G$-torsor has been introduced by Colliot-Th\'{e}l\`{e}ne and Sansuc for groups of multiplicative type, in \cite[Section 2.0]{sketch}. These two definitions coincide, up to a sign, because of \cite[Proposition 1.5.2]{sketch}.
\end{oss}

Let $X$ and $Y$ be smooth algebraic $k$-varieties. Let $Y\longrightarrow X$ be a torsor under a connected linear algebraic $k$-group $G$. In \cite{MR631309}, Sansuc proved the existence of an exact sequence that involves both the functors $\U(*)$ and $\text{Pic}(*)$ applied to $Y\longrightarrow X$: 

\begin{prop}
\label{sansuc}
There is the following exact sequence of $\Gamma_k$-modules:
\begin{equation*}
        \begin{tikzpicture}[baseline= (a).base]
        \node[scale=1] (a) at (0,0){
        \begin{tikzcd}
        1 \arrow[r]& \Ue (X) \arrow[r] & \Ue(Y) \arrow[r] & \widehat G \arrow[r] & \emph{Pic} (\overline X) \arrow[r] & \emph{Pic} (\overline Y) \arrow[r] & \emph{Pic} (\overline G) .
       \end{tikzcd}};
        \end{tikzpicture}
\end{equation*}
\end{prop}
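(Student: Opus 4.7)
The plan is to derive the sequence from the Leray spectral sequence of the torsor $\pi:\overline Y\to\overline X$ applied to the étale sheaf $\mathbb G_m$, and then identify the connecting morphisms with the explicit constructions already given in the appendix (notably the type map). Since $\pi$ is a $\overline G$-torsor, it is faithfully flat and étale-locally trivial, so computing $\pi_*\mathbb G_m$ reduces locally to computing units on products $U\times_{\overline k}\overline G$. By Lemma~\ref{rosi}, such units factor as a unit of $U$ times a character of $\overline G$, producing a short exact sequence of étale sheaves on $\overline X$
$$1\to \mathbb G_{m,\overline X}\to \pi_*\mathbb G_{m,\overline Y}\to \underline{\widehat G}\to 1,$$
where $\underline{\widehat G}$ denotes the locally constant sheaf associated to the character lattice. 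An analogous local computation, combined with the Picard additivity of Lemma~\ref{san} and the fact that $\overline G$ is $\overline k$-rational (Lemma~\ref{richiami}), identifies $R^1\pi_*\mathbb G_{m,\overline Y}$ with the locally constant sheaf attached to $\text{Pic}(\overline G)$.

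Taking global sections on the geometrically integral, connected base $\overline X$ yields $H^0(\overline X,\mathbb G_m)=\overline k^*$, $H^0(\overline X,\underline{\widehat G}) = \widehat G$, $H^i(\overline X,\mathbb G_m)=\text{Pic}(\overline X)$ for $i=1$, and likewise for $\overline Y$. The long cohomology sequence of the above short exact sequence of sheaves on $\overline X$, spliced with the five-term exact sequence of the Leray spectral sequence, assembles into a six-term exact sequence of the desired shape. Galois equivariance is automatic since $\pi$, $\mathbb G_m$, and all the sheaves involved descend from $k$.

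It remains to check that the boundary map $\widehat G\to\text{Pic}(\overline X)$ produced by this homological machinery coincides with the type map $t$ introduced earlier in the appendix. Chasing a character $\chi\in\widehat G$ through the connecting morphism of the short exact sequence of sheaves, one sees that its image is the class of the $\mathbb G_m$-torsor obtained by pushing $\overline Y\to\overline X$ forward along $\chi$, which is exactly the type construction. This is the step promised in the introduction to the appendix, where the authors announce that their proof will make the connecting map explicit.

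The main obstacle is precisely this last compatibility: the cocycle-level identification of the abstract Leray differential with the pushforward-along-$\chi$ operation. A fully self-contained alternative, which avoids spectral sequences altogether, is to build the sequence by hand: define $\Ue(Y)\to\widehat G$ by sending a unit $f$ to the character $g\mapsto f(yg)/f(y)$ (well-defined as a character of $\overline G$ by Lemma~\ref{rosi} and connectedness of $G$), verify exactness on the $\Ue$-part via fppf descent along $\pi$, and verify exactness on the $\text{Pic}$-part using the standard fact that a line bundle on $\overline X$ becomes trivial on $\overline Y$ precisely when it admits a $\overline G$-equivariant trivialization, i.e., arises from a character of $\overline G$; surjectivity of $\text{Pic}(\overline X)\to\ker(\text{Pic}(\overline Y)\to\text{Pic}(\overline G))$ follows by descending a line bundle from $\overline Y$ once its restriction to a fibre is trivialized.
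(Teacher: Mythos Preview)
Your Leray spectral sequence argument is essentially the approach of Sansuc's original paper, which is precisely what the paper cites for Proposition~\ref{sansuc}; the paper's own proof of this proposition is nothing more than that citation. Where the paper actually does work is in the material following the proposition: it writes each map down explicitly (the description of $\U(Y)\to\widehat G$ via restriction to a fibre recovers your formula $g\mapsto f(yg)/f(y)$, as Lemma~\ref{pulizia} confirms) and then re-proves exactness at $\widehat G$ and at $\text{Pic}(\overline X)$ completely by hand in Lemmas~\ref{lemma1} and~\ref{lemma2}, using direct manipulations of the contracted product $(\overline Y\times_{\overline k}\mathbb G_m)/\overline G$ together with Proposition~\ref{mum}. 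Exactness at $\U(X)$, $\U(Y)$, and $\text{Pic}(\overline Y)$ is still deferred to Sansuc. So your ``alternative'' paragraph is in fact the route the paper takes in detail; your primary spectral-sequence route is the one the paper deliberately outsources. What the spectral sequence buys you is a uniform, conceptual derivation of the whole sequence at once; what the hands-on argument buys the paper is an explicit identification of the connecting map with the type, which is the stated purpose of the appendix.

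One point in your splicing deserves a comment. The long exact sequence of $1\to\mathbb G_m\to\pi_*\mathbb G_m\to\underline{\widehat G}\to 1$ only gives
\[
\text{Pic}(\overline X)\longrightarrow H^1(\overline X,\pi_*\mathbb G_m)\longrightarrow H^1(\overline X,\underline{\widehat G}),
\]
while the five-term Leray sequence identifies $H^1(\overline X,\pi_*\mathbb G_m)$ with $\ker\bigl(\text{Pic}(\overline Y)\to H^0(\overline X,R^1\pi_*\mathbb G_m)\bigr)$. To obtain exactness at $\text{Pic}(\overline Y)$ you therefore need $\text{Pic}(\overline X)\to H^1(\overline X,\pi_*\mathbb G_m)$ to be surjective, i.e.\ $H^1(\overline X,\underline{\widehat G})=0$. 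This is true, because $\underline{\widehat G}$ is the constant sheaf $\widehat G\simeq\mathbb Z^r$ on $\overline X$ (connectedness of $G$ makes the transition functions act trivially on characters) and $H^1_{\text{\'et}}(\overline X,\mathbb Z)=0$ for a normal connected variety over an algebraically closed field; but this step should be stated rather than absorbed into the word ``spliced''.
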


\begin{proof}
See \cite[Proposition 6.10]{MR631309}. In the case in which $G$ is an algebraic torus, there is also \cite[Proposition 2.1.1]{sketch}. 
\end{proof}

Before explicating the maps, let us state the following proposition.

\begin{prop}
\label{mum}
Suppose that $k$ is algebraically closed. Let $f:Z\rightarrow Z'$ be a surjective morphism of smooth algebraic $k$-varieties, and let $\sigma: Z\times_k G\rightarrow Z$ be a right action. If $f$ has the following properties:
\begin{itemize}
    \item $f\circ \sigma = f \circ p$, where $p:Z\times_k G\rightarrow Z$ is the projection map, 
    \item the fibres of $f$ contain at most one orbit under $G$, 
\end{itemize}
then $f$ is universally open and $f:Z\rightarrow Z'$ is the geometric quotient of $Z$ under $G$.
\end{prop}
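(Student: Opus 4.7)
The plan is essentially to reprove (or invoke) the classical criterion of Mumford (Proposition 0.2 of \cite{GIT}) for a morphism to be a geometric quotient. The argument proceeds in two stages: first, establishing that $f$ is universally open via flatness; second, verifying the defining axioms of a geometric quotient.

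For the first stage, I would observe that each fibre of $f$ is in fact a single $G$-orbit, combining surjectivity with the hypothesis that fibres contain at most one orbit: indeed, any orbit meeting a fibre is contained in it by $G$-invariance, and by hypothesis there cannot be a second one. Each such fibre, being homogeneous under $G$, is smooth. To obtain flatness, I would invoke the standard result that a surjective morphism between smooth varieties is flat provided its fibres are equidimensional. Equidimensionality is the key non-trivial point. Generic flatness provides a dense open $V \subseteq Z'$ over which $f$ is flat, with fibres of common dimension $d = \dim Z - \dim Z'$. The stabiliser function $z \mapsto \dim \mathrm{Stab}(z)$ is upper-semicontinuous on $Z$, constant on each $G$-orbit (hence on each fibre of $f$), and equal to $\dim G - d$ on the $G$-stable dense open $f^{-1}(V) \subseteq Z$. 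Upper-semicontinuity combined with density forces $\dim \mathrm{Stab}(z) \geq \dim G - d$ everywhere, which bounds all fibre dimensions from above by $d$; the standard lower bound on fibre dimensions of dominant morphisms between irreducible varieties gives the reverse inequality. Once flatness is established, universal openness is automatic for morphisms of finite type.

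For the second stage, I would verify the defining properties of a geometric quotient: surjectivity and $G$-invariance are given, submersivity follows from universal openness, and fibres being orbits has already been shown. The remaining axiom is that the natural map $\mathcal O_{Z'} \to (f_* \mathcal O_Z)^G$ is an isomorphism. Injectivity is immediate from faithful flatness of $f$. For surjectivity I would apply faithfully flat descent along $f$: given a $G$-invariant section $s$ of $f_* \mathcal O_Z$, its two pullbacks $p_1^* s$ and $p_2^* s$ to $Z \times_{Z'} Z$ coincide because any two points of this fibre product lie in the same $G$-orbit and $s$ is $G$-invariant; descent then produces the desired section of $\mathcal O_{Z'}$.

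The main obstacle will be the flatness/equidimensionality argument of the first stage: equidimensionality of fibres is not automatic and genuinely uses both smoothness hypotheses together with the compatibility of the $G$-action with $f$ (specifically, that $G$ acts transitively on each fibre). Once flatness, and hence universal openness, is in hand, the geometric quotient property is essentially formal via faithfully flat descent.
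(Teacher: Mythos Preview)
Your argument is correct in its essentials, but it takes a substantially different route from the paper. The paper's proof is a direct invocation of \cite[Proposition 0.2]{GIT}: it simply checks the two hypotheses of Mumford's criterion---that $G$ is universally open over $k$ (automatic over a field) and that every geometric fibre is a single orbit (which it verifies by showing each $k$-fibre is $G$-equivariantly isomorphic to $G$ and then appealing to base change)---and quotes the conclusion. You instead reprove the relevant content of Mumford's proposition from scratch: your first stage manufactures flatness of $f$ via miracle flatness and a stabiliser-dimension semicontinuity argument, and your second stage verifies the geometric-quotient axioms by hand using fppf descent. Your approach is more self-contained and produces flatness of $f$ as an explicit byproduct; the paper's is far shorter but relies on Mumford as a black box.

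One point in your descent step needs more care. You argue that $p_1^* s = p_2^* s$ on $Z \times_{Z'} Z$ ``because any two points of this fibre product lie in the same $G$-orbit and $s$ is $G$-invariant.'' This is a pointwise argument and only yields equality of functions once you know $Z\times_{Z'}Z$ is reduced, or at least that the action map $\Psi\colon Z\times_k G \to Z\times_{Z'}Z$, $(z,g)\mapsto(z,z\cdot g)$, is schematically dominant (since $\Psi^*(p_1^*s - p_2^*s) = p^*s - \sigma^*s = 0$ by $G$-invariance). This does hold---for instance because $p_1$ is flat over the integral scheme $Z$, so every associated point of $Z\times_{Z'}Z$ lies over the dense open where $f$ is smooth (generic smoothness, characteristic zero), and over that locus the fibre product is smooth and hence reduced---but it should be stated.
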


\begin{proof}
We want to apply \cite[Proposition 0.2]{GIT}. We need to show that $G$ is universally open over $k$ and that for all the algebraically closed fields $l$ the geometric fibres of $f$ over $l$ contain at most one orbit under $G_l$. It is a general fact that $k$-schemes are universally open (see \cite[Lemma 9.5.6]{vakil}). It is enough to show that the fibres are isomorphic to $G$, since we get the claim by base change. We fix $z' \in Z'(k)$ and an element $z\in Z(k)$, such that $f(z)=z'$. By the properties of $f$ the only elements in the fibres of $z'$ are $z\cdot g$, where $g\in G(k)$. Thus we get a $G$-equivariant isomorphism between $G$ and $X_y$.
\end{proof}

From now on, we assume that $Y(k)$ is non-empty. 
\begin{itemize}
    \item $\U(X)\rightarrow \U(Y)$ is the pull-back by $\overline{Y} \longrightarrow \overline{X}$.
    \item We fix $x\in X(k)$, such that the fibre $Y_x$ has a $k$-rational point. In particular there is an isomorphism $Y_x\simeq G$, that defines a closed embedding $i:G\rightarrow Y$. The morphism $\U(Y)\rightarrow \U(G)$ is the pull-back by $i$. By Lemma \ref{rosi}, there is a canonical isomorphism $\U(Y\times_k G)\simeq \U(Y)\oplus \U(G)$. We apply \cite[Lemma 6.4]{MR631309} to get the well definition of the map. 
    \item The map $\widehat G\rightarrow \pico X$ is the $type$ of the torsor.
    \item $\pico X\rightarrow \pico Y$ is the pull-back by $\overline{Y} \longrightarrow \overline{X}$.
    \item We fix $x\in X(k)$, such that the fibre $Y_x$ has a $k$-rational point. In particular there is an isomorphism $Y_x\simeq G$, that defines a closed embedding $i:G\rightarrow Y$. The morphism $\pico Y\rightarrow \pico G$ is the pull-back by $i$. By Lemma \ref{san}, there is a canonical isomorphism $\pic(\overline{Y}\times_{\overline{k}} \overline{G})\simeq \pico{Y}\oplus \pico G$. We apply \cite[Lemma 6.4]{MR631309} to get the well definition of the map. 
\end{itemize}

\begin{oss}
\label{funtoriale}
All the above maps are morphisms of groups.  
\end{oss}

\begin{prop}
\label{gammaequi}
The maps of the exact sequence of Sansuc are $\Gamma_k$-equivariant.
\end{prop}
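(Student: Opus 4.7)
The plan is to go through the six maps in the sequence one by one and observe that each is built entirely from data defined over $k$ (morphisms of $k$-schemes and canonical decompositions), together with one construction whose equivariance was already spelled out. Galois equivariance then follows from naturality of pull-back under base change.

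First, the maps $\U(X)\to \U(Y)$ and $\pico X\to \pico Y$ are the pull-backs along the structure morphism $Y\to X$. Since this morphism is defined over $k$, its base change to $\overline k$ is a $\Gamma_k$-equivariant morphism of $\overline k$-schemes, and hence so are the induced pull-backs on units and Picard groups. The connecting map $\widehat G \to \pico X$ is the \emph{type} map, and the discussion just before Proposition~\ref{sansuc} already records that it is $\Gamma_k$-equivariant, because the push-forward construction involved is carried out entirely on the base change of objects defined over $k$, so the action of $\Gamma_k$ on $\widehat G$ and on $\pico X$ is compatible with it.

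For the remaining two maps $\U(Y)\to \widehat G$ and $\pico Y\to \pico G$, the construction used a chosen $k$-rational point in the fibre $Y_x$ over a $k$-point $x\in X(k)$; this choice yields a closed embedding $i:G\hookrightarrow Y$ of $k$-schemes. The map $\U(Y)\to \widehat G$ is the composition
\[
\U(Y)\xrightarrow{\sigma^*-p^*}\U(Y\times_k G)\xrightarrow{\sim}\U(Y)\oplus \U(G)\twoheadrightarrow \U(G)=\widehat G,
\]
where $\sigma,p:Y\times_k G\rightrightarrows Y$ are the action and projection, and the middle isomorphism is that of Lemma~\ref{rosi}. All three pieces are $\Gamma_k$-equivariant: $\sigma$ and $p$ are morphisms of $k$-schemes, and the decomposition of Lemma~\ref{rosi} is canonical (it is the unique Künneth-type splitting), hence automatically compatible with the Galois action since the product $Y\times_k G$ is formed over $k$. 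The same strategy, with Lemma~\ref{san} in place of Lemma~\ref{rosi}, takes care of $\pico Y\to \pico G$. The choice of $i$ only serves to fix a splitting that identifies the $\U(G)$-summand (resp.\ $\pico G$-summand) with the image of a section of the action map; that section is itself defined over $k$, so introduces no Galois obstruction.

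The only substantive point, and perhaps the main thing to be careful about, is checking that the $\U(Y\times_k G)\simeq \U(Y)\oplus \U(G)$ isomorphism of Rosenlicht, and its Picard analogue, are genuinely natural in the sense needed, so that the $\Gamma_k$-action on the left corresponds under the decomposition to the diagonal action on the right; this is essentially what \cite[Lemma~6.4]{MR631309} provides, but one should state it explicitly. Once that is in place the rest is formal, so the proof reduces to writing down the six checks in a line or two each, with the Rosenlicht/Sansuc naturality lemma doing all the real work.
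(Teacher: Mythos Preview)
Your argument is essentially correct, but your emphasis is the reverse of the paper's. The paper treats all maps \emph{except} the connecting map $\widehat G\to\pico X$ as obvious (``the other checks follow by the fact that the torsor is defined over $k$''), and devotes the entire proof to an explicit computation for the type map: it writes down the two $\overline G$-actions on $\overline Y\times_{\overline k}\mathbb G_{m,\overline k}$ arising from $\delta\circ\sigma$ and from $\sigma\circ\delta$, and checks by hand that they differ by $\sigma^{-1}$, hence yield the same class in $\pico X$. You, on the other hand, dismiss the type map by citing the sentence before Definition~A.2 and instead spend your effort on $\U(Y)\to\widehat G$ and $\pico Y\to\pico G$ via the $\sigma^*-p^*$ description.

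Be aware that the sentence you cite (``the map is $\Gamma_k$-equivariant because the objects are defined over $k$'') is precisely the claim that Proposition~\ref{gammaequi} is meant to justify rigorously, so your appeal to it is somewhat circular in context. Your one-line explanation (``the push-forward construction is carried out on base change of objects defined over $k$'') is the right idea and would be acceptable in many papers, but the author evidently felt the push-forward case deserved the explicit cocycle-level check. Also note that the paper's description of $\U(Y)\to\widehat G$ is simply pull-back along the $k$-embedding $i:G\hookrightarrow Y$, not your $\sigma^*-p^*$ formulation; these agree by \cite[Lemma~6.4]{MR631309}, and since $i$ is a $k$-morphism the equivariance there really is immediate, which is why the paper does not dwell on it.
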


\begin{proof}
We give a proof only for the connecting map, because the other checks follow by the fact that the torsor is defined over $k$. We have to show that the following diagram commutes:
\begin{equation*}
        \begin{tikzpicture}[baseline= (a).base]
        \node[scale=1] (a) at (0,0){
        \begin{tikzcd}
       \widehat G \arrow[r, "\delta"]\arrow[d,"\sigma \cdot"] & \text{Pic}(\overline{X})\arrow[d, "\sigma \cdot"]\\
       \widehat G \arrow[r, "\delta"] & \text{Pic}(\overline{X}).
       \end{tikzcd}};
        \end{tikzpicture}
\end{equation*}
\begin{itemize}
    \item $\delta \circ \sigma$) The character $\chi$ is sent to the $\mathbb G_m$-torsor $(\overline Y \times_{\overline{k}}\mathbb G_{m,\overline{k}})/\overline{G}\rightarrow \overline{X}$, where the action of $G(\overline{k})$ on $({Y}\times_k\mathbb G_{m})(\overline{k})$ is
\begin{equation*}
        \begin{tikzpicture}[baseline= (a).base]
        \node[scale=1] (a) at (0,0){
        \begin{tikzcd}
       ((y,r), g)\arrow[r] & (y\cdot g, (\sigma \cdot \chi)(g^{-1}) r).
       \end{tikzcd}};
        \end{tikzpicture}
\end{equation*}
    \item $\sigma \circ \delta$) The $\mathbb G_m$-torsor $(\overline{Y}\times_{\overline{k}} \mathbb G_{m,\overline{k}})/\overline{G}\rightarrow \overline{X}$ is given by 
\begin{equation*}
        \begin{tikzpicture}[baseline= (a).base]
        \node[scale=1] (a) at (0,0){
        \begin{tikzcd}
       \overline{Y}\arrow[r]\arrow[d, "G"] & (\overline{Y}\times_{\overline{k}}\mathbb G_{m,\overline{k}})/\overline{G}\arrow[ld, "\mathbb G_m"] & (\overline{Y}\times_{\overline{k}}\mathbb G_{m,\overline{k}})/\overline{G}\arrow[ld, "\mathbb G_m"]\arrow[l, "(\sigma^{-1})^*"] \\
       \overline{X} & \overline{X} \arrow[l, "(\sigma^{-1})^*"]
       \end{tikzcd}};
        \end{tikzpicture}
\end{equation*}
where the action of $G (\overline{k})$ on $({Y}\times_k\mathbb G_{m})(\overline{k})$ is
\begin{equation*}
        \begin{tikzpicture}[baseline= (a).base]
        \node[scale=1] (a) at (0,0){
        \begin{tikzcd}
       ((\sigma^{-1}(y),\ \sigma^{-1}(r)), g)\arrow[r] & (\sigma^{-1}(y)\cdot \sigma^{-1}(g),\ (\sigma^{-1} \chi(g^{-1}))\ \sigma^{-1}(r)).
       \end{tikzcd}};
        \end{tikzpicture}
\end{equation*}
\end{itemize}
The two actions of $G(\overline{k})$ on $({Y}\times_k\mathbb G_{m})(\overline{k})$ differ by $\sigma^{-1}$, so the quotient spaces induce the same element in $\pico X$.
\end{proof}

\begin{lemma}
\label{pulizia}
Let $f:Y\rightarrow \mathbb G_m$ be a morphism of algebraic $k$-varieties, and let $\chi:G_{\overline{k}}\rightarrow \mathbb G_{m,\overline{k}}$ be the image of $f$ under $\Ue(Y)\rightarrow \widehat G$. There is the following identity (on $\overline{k}$-points):
$$f(y\cdot g)=f(y)\chi(g).$$
\end{lemma}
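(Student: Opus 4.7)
The plan is to apply Rosenlicht's product formula (Lemma \ref{rosi}) to separate the dependence on $y$ and on $g$ in the ratio $f(y\cdot g)/f(y)$. Let $\sigma: Y\times_k G\rightarrow Y$ denote the action and $p,q$ the projections onto $Y$ and $G$ respectively. Since $f$ is a unit on $\overline{Y}$, the pullbacks $\sigma^*f$ and $p^*f$ are both units on $\overline{Y}\times_{\overline{k}}\overline{G}$, hence so is the function
\[
F(y,g) := f(y\cdot g)\,f(y)^{-1}.
\]

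By Lemma \ref{rosi}, the class of $F$ in $\U(\overline{Y}\times_{\overline{k}}\overline{G})$ decomposes uniquely as an element of $\U(\overline{Y})\oplus \U(\overline{G})$, so there exist units $h$ on $\overline{Y}$, $\chi'$ on $\overline{G}$, and a constant $c\in\overline{k}^*$ with $F(y,g)=c\cdot h(y)\cdot \chi'(g)$. Substituting $g=1_G$ yields $F(y,1_G)=1$ for every $y$, forcing $h$ to be constant; absorbing all scalars into $\chi'$, we obtain the clean identity $F(y,g)=\chi'(g)$, equivalently
\[
f(y\cdot g)=f(y)\,\chi'(g).
\]
Evaluating at $g=1_G$ gives $\chi'(1_G)=1$, and comparing $f((y\cdot g)\cdot h)=f(y\cdot g)\chi'(h)$ with $f(y\cdot(gh))=f(y)\chi'(gh)$ yields $\chi'(gh)=\chi'(g)\chi'(h)$, so $\chi'$ is a character of $\overline{G}$.

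It remains to identify $\chi'$ with $\chi$. Recall that $\chi$ was defined as the image of $f$ under $\U(Y)\rightarrow \widehat{G}$, which the paper describes as pullback along the closed embedding $i:G\rightarrow Y$, $g\mapsto y_0\cdot g$, for some fixed $y_0\in Y(k)$. Specializing the identity to $y=y_0$ gives $\chi'(g)=f(y_0\cdot g)/f(y_0)=(i^*f)(g)/f(y_0)$, so $\chi'$ and $i^*f$ differ by a constant in $\U(\overline{G})$; being the unique character representatives in their classes (both taking value $1$ at $1_G$), $\chi'$ and $\chi$ coincide. No step is a genuine obstacle here: the content of the argument is entirely the Rosenlicht decomposition, and the remaining verifications amount to evaluating at $1_G$ and $y_0$.
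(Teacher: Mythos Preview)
Your proof is correct and follows essentially the same approach as the paper's. Both arguments rest on Rosenlicht's product decomposition: the paper invokes it through \cite[Lemma 6.4]{MR631309} to conclude that for every $y$ the class of $f\circ i_y$ in $\U(G)$ equals $\chi$, then solves for the constant by evaluating at $1_G$; you unpack this step directly by decomposing $F(y,g)=f(y\cdot g)f(y)^{-1}$ on $\overline{Y}\times\overline{G}$ and normalising at $g=1_G$. The identifications with $\chi$ at the end are literally the same computation.
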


\begin{proof}
Without loss of generality, we can assume that $k=\overline{k}$. If we fix a point $y\in Y(k)$, then we get an inclusion $i_y:G\subset Y$. By \cite[Lemma 6.4]{MR631309}, the group morphism $\chi$ coincides with $\overline{r_y}\circ f \circ i_y$, where $\overline{r_y}$ is the multiplication in $\mathbb G_m$ by a constant $r_y$. The following computation proves the claim:
$$f(y\cdot g)=(f\circ i_y)(g)=r_y^{-1}\chi(g)=f(y)\chi(g),$$
where the last identity follows by $\text{id}=\chi(e)=(\overline{r_y}\circ f \circ i_y)(e)=r_yf(y).$
\end{proof}

Let us start demonstrating that the sequence is exact.

\begin{lemma}
\label{lemma1}
The sequence Of Sansuc is exact in $\widehat G$.
\end{lemma}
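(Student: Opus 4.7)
The plan is to show that $\U(Y) \to \widehat G \to \pic(\overline X)$ is exact at $\widehat G$ by producing, in both directions, explicit (de)trivializations of the $\mathbb G_m$-torsor given by the type map. Throughout, fix $y_0 \in Y(k)$ above some $x_0 \in X(k)$ and denote by $\pi: \overline Y \to \overline X$ the structure map. The characterization in Lemma~\ref{pulizia}, $f(y \cdot g) = f(y)\chi(g)$, will be the bridge between characters and invertible functions.

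For the inclusion $\mathrm{im}\bigl(\U(Y) \to \widehat G\bigr) \subset \ker\bigl(\widehat G \to \pic(\overline X)\bigr)$, I will start from $f \in k[\overline Y]^*$ with image the character $\chi$. By Lemma~\ref{pulizia}, $f(y \cdot g) = f(y)\chi(g)$. I then define a map
\begin{equation*}
(\overline Y \times_{\overline k} \mathbb G_{m,\overline k})/\overline G \longrightarrow \overline X \times_{\overline k} \mathbb G_{m,\overline k}, \qquad [(y,r)] \longmapsto (\pi(y),\, rf(y)),
\end{equation*}
and check it is well-defined (using the multiplication rule for $f$, the identity $(y \cdot g, \chi(g^{-1})r) \mapsto (\pi(y), \chi(g^{-1})r f(y)\chi(g)) = (\pi(y), rf(y))$) and $\mathbb G_m$-equivariant. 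This exhibits the pushforward torsor as trivial, so its class in $\pic(\overline X)$ vanishes.

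For the reverse inclusion, take $\chi \in \widehat G$ whose type class is trivial, so there is an isomorphism of $\mathbb G_m$-torsors $\phi: (\overline Y \times \mathbb G_m)/\overline G \xrightarrow{\sim} \overline X \times \mathbb G_m$. Compose the section $s: \overline Y \to (\overline Y \times \mathbb G_m)/\overline G$, $y \mapsto [(y,1)]$, with $\phi$ and with the projection to $\mathbb G_m$ to obtain $f \in k[\overline Y]^*$. The key computation is that $[(y \cdot g, 1)] = [(y, \chi(g))]$ in the quotient (since $(y,\chi(g)) \cdot g = (y \cdot g, \chi(g^{-1})\chi(g)) = (y \cdot g, 1)$), and by $\mathbb G_m$-equivariance of $\phi$ this forces $f(y \cdot g) = f(y)\chi(g)$. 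Invoking Lemma~\ref{pulizia} once more (applied to $f$, whose associated character must coincide with $\chi$ by the defining identity), we conclude that $f$ maps to $\chi$ under $\U(Y) \to \widehat G$.

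The main conceptual point, and the only place where something substantive happens, is the second direction: one must recognize that a trivialization of the $\mathbb G_m$-torsor over $\overline X$ gives a global invertible function on $\overline Y$, and that the transformation law of this function under the $\overline G$-action is precisely $\chi$. Everything else is a formal manipulation of torsors, and the fact that $f$ is defined only up to $\overline k^*$ is harmless because we work in $\U(Y) = k[\overline Y]^*/\overline k^*$. The Galois-equivariance of the constructed maps is automatic from the $\Gamma_k$-equivariance of the connecting map (Proposition~\ref{gammaequi}) and from the fact that all torsors involved are defined over $k$.
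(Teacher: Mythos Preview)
Your proof is correct and follows essentially the same approach as the paper's: both directions construct explicit (de)trivializations of the pushforward $\mathbb G_m$-torsor via the map $(y,r)\mapsto(\pi(y),f(y)r)$ and its inverse reading, with Lemma~\ref{pulizia} supplying the key identity $f(y\cdot g)=f(y)\chi(g)$. The only cosmetic differences are that the paper first passes to $k=\overline{k}$ and invokes Proposition~\ref{mum} to justify that the $G$-invariant map descends to an isomorphism of schemes, whereas you work directly on the quotient and implicitly use that a $\mathbb G_m$-equivariant map of $\mathbb G_m$-torsors is automatically an isomorphism; your computation in the reverse direction is also slightly more explicit than the paper's.
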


\begin{proof}
Without loss of generality, we can suppose that $k=\overline{k}$.
\begin{itemize}
    \item The sequence $\U(Y)\rightarrow \widehat G \rightarrow \text{Pic}( X)$ is a complex. Let $f:Y\rightarrow \mathbb G_m$ be a morphism of algebraic $k$-varieties, and let $\chi:G\rightarrow \mathbb G_m$ the induced character. We recall that $(Y\times_k \mathbb G_m)/G$ is the quotient of $(Y\times_k \mathbb G_m)$ by the following action of $G$:
    \begin{equation*}
        \begin{tikzpicture}[baseline= (a).base]
        \node[scale=1] (a) at (0,0){
        \begin{tikzcd}
        ((y,r),g)\arrow[r]& (y\cdot g,\ \chi(g^{-1})r).
       \end{tikzcd}};
        \end{tikzpicture}
\end{equation*}
    We define a morphism $h:Y\times_k\mathbb G_m\rightarrow X\times_k \mathbb G_m$ in the following way:
\begin{equation*}
        \begin{tikzpicture}[baseline= (a).base]
        \node[scale=1] (a) at (0,0){
        \begin{tikzcd}
        (y,r) \arrow[r] & (p(y), f(y)r),
       \end{tikzcd}};
        \end{tikzpicture}
\end{equation*}
    where $p$ is the $G$-torsor $Y\longrightarrow X$. Such a map is a $\mathbb G_m$-equivariant map of schemes over $X$. Furthermore, it is $G$-invariant:
    $$h(y,r)=(p(y),f(y)r)\overset{?}{=}(p(y\cdot g), f(y\cdot g)\chi(g^{-1})r)=h(y\cdot g,\ \chi(g^{-1})r),$$
    where the central identity follows by Lemma \ref{pulizia} and $G$-invariance of the torsor $p$. By Proposition \ref{mum}, there is an isomorphism of $\mathbb G_m$-torsors $(Y\times_k \mathbb G_m)/G\rightarrow X\times_k \mathbb G_m$ over $X$, which is the claim.
\item The sequence $\U(Y)\rightarrow \widehat G \rightarrow \text{Pic} (X)$ is exact. Let $\chi: G\rightarrow \mathbb G_m$ be in the kernel of $\delta$. This means that the pushforward of $Y\longrightarrow X$ by $\chi$ is the trivial torsor. In particular, there is an isomorphism $(Y\times_k\mathbb G_m)/G\overset{\sim}{\longrightarrow}X\times_k \mathbb G_m$, that we use to define a map $Y\rightarrow\mathbb G_m$:
    \begin{equation*}
        \begin{tikzpicture}[baseline= (a).base]
        \node[scale=1] (a) at (0,0){
        \begin{tikzcd}
        Y\arrow[r] & (Y\times_k\mathbb G_m)/G \arrow[r, "\sim"] & X\times_k \mathbb G_m \arrow[r] & \mathbb G_m,
       \end{tikzcd}};
        \end{tikzpicture}
    \end{equation*}
where the morphism $Y\rightarrow (Y\times_k \mathbb G_m)/G$ is the inclusion to the first component and the map $X\times_k \mathbb G_m\rightarrow \mathbb G_m$ is the projection to the second component. If we fix a point $y\in Y(k)$, then we get an inclusion $i_y:G\subset Y$. By construction, up to a sign and up to multiply in $\mathbb G_m$ by a constant, the composition map $G\rightarrow Y \rightarrow \mathbb G_m$ coincides with the character $\chi$.
\end{itemize}
\end{proof}

\begin{lemma}
\label{lemma2}
The sequence of Sansuc is exact in $\emph{Pic}(\overline{X})$.
\end{lemma}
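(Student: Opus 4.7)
The plan is to verify exactness at $\text{Pic}(\overline X)$ in two steps: the complex condition, and then the reverse inclusion, which is the substantial part. We may work over $\overline k$ throughout since exactness of the Sansuc sequence at $\text{Pic}(\overline X)$ can be checked on underlying abelian groups.

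For the composition being zero, I would start with a character $\chi\in\widehat G$ and compute the pull-back of the $\mathbb{G}_m$-torsor $\delta(\chi)=(\overline Y\times_{\overline k}\mathbb{G}_{m,\overline k})/\overline G$ along $p:\overline Y\to\overline X$. Since $p$ is itself a $\overline G$-torsor, there is a canonical isomorphism $\overline Y\times_{\overline X}\overline Y\simeq \overline Y\times_{\overline k}\overline G$, and substituting this in the fibre product $\overline Y\times_{\overline X}(\overline Y\times_{\overline k}\mathbb{G}_{m,\overline k})/\overline G$ yields $(\overline G\times_{\overline k}\mathbb{G}_{m,\overline k})/\overline G$-bundle over $\overline Y$, i.e.\ the trivial $\mathbb{G}_m$-torsor. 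Hence $p^*\circ\delta=0$.

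For the reverse inclusion, assume $\mathcal L\in\text{Pic}(\overline X)$ pulls back to a trivial line bundle on $\overline Y$. Let $\mathcal M\to\overline X$ be the associated $\mathbb{G}_m$-torsor, and fix a trivialization $\varphi: p^*\mathcal M\xrightarrow{\sim}\overline Y\times_{\overline k}\mathbb{G}_{m,\overline k}$ compatible with the $\mathbb{G}_m$-action. Since $\mathcal M$ lives on $\overline X$, the $\overline G$-action on $\overline Y$ lifts canonically to an action on $p^*\mathcal M$ commuting with $\mathbb{G}_m$. Transporting this action through $\varphi$ gives a right action of $\overline G$ on $\overline Y\times_{\overline k}\mathbb{G}_{m,\overline k}$ of the shape
\begin{equation*}
(y,r)\cdot g=(y\cdot g,\ \psi(y,g)\,r)
\end{equation*}
for a uniquely determined $\psi:\overline Y\times_{\overline k}\overline G\to\mathbb{G}_{m,\overline k}$. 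The key step now is to argue that $\psi$ in fact depends only on $g$. Lemma \ref{rosi} gives $\U(\overline Y\times_{\overline k}\overline G)\simeq \U(\overline Y)\oplus \U(\overline G)$, so writing $\psi$ as an invertible regular function yields a decomposition $\psi(y,g)=c\cdot f(y)\cdot h(g)$ with $f\in\U(\overline Y)$, $h\in\U(\overline G)$, $c\in\overline k^{\,*}$. Specializing the identity axiom $\psi(y,e)=1$ forces $f$ to be constant on $\overline Y$, so $\psi(y,g)=\chi(g)$ for the morphism $\chi(g):=\psi(y_0,g)$. The associativity of the action then becomes $\chi(gg')=\chi(g)\chi(g')$, placing $\chi$ in $\widehat G$.

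By construction, the quotient of $\overline Y\times_{\overline k}\mathbb{G}_{m,\overline k}$ by this action recovers $\mathcal M$; matching against the conventions defining $\delta$, one reads off that $[\mathcal M]=\delta(\chi^{\pm 1})$ in $\text{Pic}(\overline X)$, concluding exactness. The main difficulty I expect is the bookkeeping that turns the existence of the trivialization $\varphi$ into a genuine $\overline G$-action on $\overline Y\times_{\overline k}\mathbb{G}_{m,\overline k}$ of the stated form and then cleanly extracts the character; the content is elementary but the sign/convention discipline must line up with the definition of the $type$ map used in the paper (up to the sign ambiguity noted after Proposition \ref{sansuc}).
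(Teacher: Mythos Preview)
Your argument is correct and follows a genuinely different route from the paper. For the complex condition, the paper invokes functoriality of pushforward in the square of $\Hh^1$'s, whereas you compute $p^*\delta(\chi)$ directly via $\overline Y\times_{\overline X}\overline Y\simeq \overline Y\times_{\overline k}\overline G$; both are fine. The real divergence is in the reverse inclusion. The paper fixes a point $l\in L(k)$, identifies the fibre of $Y\times_k\mathbb G_m\to L$ over $l$ with $G$, projects to $\mathbb G_m$, and then spends most of the proof checking that the resulting map $G\to\mathbb G_m$ is independent of the choice of $l$ (passing to open subsets, local trivialisations, and finally invoking Proposition~\ref{rosigene} to recognise a group morphism). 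Your approach instead exploits that the $\overline G$-action on $\overline Y$ lifts canonically to $p^*\mathcal M$, transports it through the trivialisation, and uses Rosenlicht's decomposition of units (Lemma~\ref{rosi}) together with the identity axiom to force the cocycle $\psi(y,g)$ to be independent of $y$. This is cleaner: the well-definedness that the paper verifies by hand is built in from the start, and associativity of the action gives the character property for free. The trade-off is that the paper's argument is more concrete and makes the eventual identification $L\simeq (Y\times_k\mathbb G_m)/G$ (via Proposition~\ref{mum}) very explicit, while you leave the quotient identification and the sign matching with the definition of $\delta$ as bookkeeping --- which is accurate, but you should make sure to state that $p^*\mathcal M\to\mathcal M$ is itself a $\overline G$-torsor so that the quotient really returns $\mathcal M$.
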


\begin{proof}
Without loss of generality, we can assume that $k=\overline{k}$.
\begin{itemize}
    \item The sequence $\widehat G \rightarrow \text{Pic}( X) \rightarrow \text{Pic}( Y)$ is a complex. Let $\chi:G\rightarrow \mathbb G_m$ be a character. By functoriality, there is the following commutative diagram:
\begin{equation*}
        \begin{tikzpicture}[baseline= (a).base]
        \node[scale=1] (a) at (0,0){
        \begin{tikzcd}
        \text{H}^1(X,G)\arrow[r, "\chi_{*}"]\arrow[d]&\text{H}^1(X,\mathbb G_m)\arrow[d]\\
        \text{H}^1(Y,G)\arrow[r, "\chi_{*}"] & \text{H}^1(Y,\mathbb G_m).
       \end{tikzcd}};
        \end{tikzpicture}
\end{equation*}
The map $\Hh^1(X,G)\rightarrow \Hh^1(Y,G)$ trivialises the $G$-torsor $Y\longrightarrow X$. Thus, following the other direction we obtain that the image of $\widehat G \rightarrow \text{Pic}( X)$ is contained in the kernel of $\text{Pic}( X)\rightarrow \text{Pic}( Y)$.
\item The sequence $\widehat G \rightarrow \text{Pic}( X) \rightarrow \text{Pic}( Y)$ is exact. Let $L\longrightarrow X$ be a $\mathbb G_m$-torsor that is trivialised by $Y\longrightarrow X$. We have to show that $L\longrightarrow X$ is induced by a character. There is the following cartesian diagram:
\begin{equation*}
        \begin{tikzpicture}[baseline= (a).base]
        \node[scale=1] (a) at (0,0){
        \begin{tikzcd}
        Y\times_k \mathbb G_m\simeq L_Y \arrow[r, "G"]\arrow[d, "\mathbb G_m"] & L \arrow[d, "\mathbb G_m"]\\
        Y \arrow[r, "G"]& X.
       \end{tikzcd}};
        \end{tikzpicture}
\end{equation*}
We fix $l\in L(k)$. The fibre over $l$ of $Y\times_k\mathbb G_m\longrightarrow L$ is isomorphic to $G$, thus we can define a morphism:
\begin{equation*}
        \begin{tikzpicture}[baseline= (a).base]
        \node[scale=1] (a) at (0,0){
        \begin{tikzcd}
        G \arrow[r] & Y\times_k \mathbb G_m \arrow[r] & \mathbb G_m,
       \end{tikzcd}};
        \end{tikzpicture}
\end{equation*}
by composition with the projection $Y\times_k \mathbb G_m\rightarrow \mathbb G_m$. If we change the trivialisation of $L_Y\longrightarrow Y$ or if we take a second $G$-isomorphism between the fibre over $l$ and $G$, then we get two maps $G\rightarrow \mathbb G_m$ that differ up to multiply in $\mathbb G_m$ by a constant. We have to show that the choice of the point $l$ does not change the map, up to multiply in $\mathbb G_m$ by a constant. We do it in two steps:
\begin{itemize}
    \item The map does not depend from the chosen open subset $U$ of $X$. Let $U$ be an open subset that contains the image of $l$ by $L\longrightarrow X$. There is the following commutative diagram, where all the squares are cartesian:
    \begin{equation*}
        \begin{tikzpicture}[baseline= (a).base]
        \node[scale=1] (a) at (0,0){
        \begin{tikzcd}
         & & G \arrow[rr]\arrow[dd]\arrow[rd] & & \text{Spec}\  k\arrow[dd]\arrow[rd, "l"]\\
        \mathbb G_m  & Y\times_k\mathbb G_m\arrow[rr, "\sim\ \ \ \ \ \ \ \ \ \ "]\arrow[l] & & Y\times_X L\arrow[rr]\arrow[dd] & & L\arrow[dd] \\
         Y_U\times_k \mathbb G_m\arrow[u]\arrow[rr, "\sim"]\arrow[ru] & & Y_U \times_U L_U \arrow[rr]\arrow[dd]\arrow[ru]& & L_U \arrow[dd]\arrow[ru]\\
        & & & Y\arrow[rr] & & X\\
         & & Y_U \arrow[ru]\arrow[rr]& & U\arrow[ru]\\
       \end{tikzcd}};
        \end{tikzpicture}
    \end{equation*}
    The diagram shows that if we replace $X$ by an open subset, then the map $G\rightarrow \mathbb G_m$ does not change. In particular, since $X$ is connected, it is enough to show that the map does not change over a chosen open covering of $X$.
    \item Let $U$ be an open subset of $X$, that trivialises the $\mathbb G_m$-torsor $L\longrightarrow X$ and that contains the image of $l$ under the same map. We can suppose that $U=X$, up to replace $X$ with $U$. There is the following commutative diagram, where all the squares are cartesian:
\begin{equation*}
        \begin{tikzpicture}[baseline= (a).base]
        \node[scale=1] (a) at (0,0){
        \begin{tikzcd}
        && G\arrow[rr]\arrow[d] && \text{Spec}\ k\arrow[d] \\
        && Y\arrow[rr]\arrow[dd] && X\arrow[dd] \\
        \mathbb G_m & Y \times_k \mathbb G_m \arrow[rr]\arrow[dr, "\sim"]\arrow[l]&& X\times_k \mathbb G_m\arrow[dr, "\sim"] \\
        && L_Y\arrow[rr]\arrow[d] && L\arrow[d] \\
        && Y \arrow[rr]&& X.
       \end{tikzcd}};
        \end{tikzpicture}
\end{equation*}
We can suppose that $\text{Spec}\ k\rightarrow L$ coincides with $l$, up to change the trivialisation of $L\longrightarrow X$. The diagram shows that the induced map $G\rightarrow \mathbb G_m$ factors trough $Y$. By \cite[Lemma 6.4]{MR631309}, we get the independence by the chosen point.
\end{itemize}
By Proposition \ref{rosigene}, up to multiply in $\mathbb G_m$ by a constant, the above map $\chi:G\rightarrow \mathbb G_m$ is a group morphism. We have to show that $L\longrightarrow X$ is the image of $\chi$ under $\widehat G \rightarrow \text{Pic}(X)$. By construction, the action of $G$ on $Y\times_k\mathbb G_m$ is:
\begin{equation*}
        \begin{tikzpicture}[baseline= (a).base]
        \node[scale=1] (a) at (0,0){
        \begin{tikzcd}
       ((y,r),g)\arrow[r]& (y\cdot g, \chi(g^{-1})r).
       \end{tikzcd}};
        \end{tikzpicture}
\end{equation*}
By Proposition \ref{mum}, there is an isomorphism of $\mathbb G_m$-torsors over $X$ between $L$ and $(Y\times_k\mathbb G_m)/G$.
\end{itemize}
\end{proof}

A combination of previous lemmas and known results produce the following:

\begin{prop}
The sequence of Sansuc is exact.
\end{prop}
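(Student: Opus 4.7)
The plan is to reduce to $k=\bar{k}$ via Proposition \ref{gammaequi}, since the maps are $\Gamma_k$-equivariant and so exactness as abelian groups upgrades to exactness of $\Gamma_k$-modules. Exactness at $\widehat G$ and at $\pico X$ has already been established in Lemma \ref{lemma1} and Lemma \ref{lemma2}, respectively, so only three positions remain to check: $\U(X)$, $\U(Y)$, and $\pico Y$.

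I would dispatch the two easier positions first. For exactness at $\U(X)$, the map is pullback along the faithfully flat morphism $Y\rightarrow X$, which is injective on units, and modding out by constants preserves injectivity since $\bar{k}$-constants pull back to $\bar{k}$-constants. For exactness at $\U(Y)$, by Lemma \ref{pulizia} the map $\U(Y)\rightarrow \widehat G$ sends $f$ to the character $\chi$ satisfying $f(y\cdot g)=f(y)\chi(g)$, so if $\chi$ is trivial then $f$ is $G$-invariant on $Y$; the geometric-quotient property of the torsor $Y\rightarrow X$ then produces a unique unit on $X$ whose pullback is $f$.

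The main content is exactness at $\pico Y$. I would use the isomorphism $Y\times_X Y\simeq Y\times_k G$, under which the two projections $p_1, p_2\colon Y\times_X Y\rightarrow Y$ become respectively the first projection and the action map $\sigma\colon Y\times_k G\rightarrow Y$. Lemma \ref{san} together with Lemma \ref{richiami} provides the decomposition $\pic(\overline{Y\times_k G})\simeq \pico Y\oplus \pico G$. Given $L\in \pico Y$ in the kernel of the restriction map to $\pico G$, the restriction of $L$ to every fiber of $Y\rightarrow X$ is trivial, because the connecting map $\pico Y\rightarrow \pico G$ does not depend on the chosen base point (as in \cite[Lemma 6.4]{MR631309}). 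Hence both $p_1^*L$ and $\sigma^*L$ have class $([L],0)$ in $\pico Y\oplus \pico G$ and therefore agree, yielding descent data for $L$ along the fppf torsor $Y\rightarrow X$, and so an $L_X\in \pico X$ with $p^*L_X\simeq L$.

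The main obstacle I expect is the descent step at $\pico Y$: an abstract isomorphism $p_1^*L\simeq \sigma^*L$ would not by itself give descent, as one also needs the cocycle condition on $Y\times_X Y\times_X Y$. Since line bundles form an fppf stack and the identification above is canonical (coming from equality of classes in the decomposed Picard group), the cocycle condition is automatic, but it is the single point that needs to be spelled out carefully. Once the three remaining exactness checks are combined with Lemma \ref{lemma1} and Lemma \ref{lemma2}, the full sequence of Sansuc is exact.
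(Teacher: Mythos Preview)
Your approach diverges from the paper's. The paper does not re-prove exactness at $\U(X)$, $\U(Y)$, and $\pico Y$: it observes that, by \cite[Lemma~6.4]{MR631309}, the maps it has written down at those three positions coincide with Sansuc's original maps, so exactness there is inherited from \cite[Proposition~6.10]{MR631309}. The only new work is at $\widehat G$ and $\pico X$, where the connecting map has been re-described as the \emph{type}; Lemmas~\ref{lemma1} and~\ref{lemma2} verify exactness for that alternative description. Your proposal instead tries to give a self-contained argument at all five spots.

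Your arguments at $\U(X)$ and $\U(Y)$ are fine. The gap is at $\pico Y$. You correctly note that $[p_1^*L]=[\sigma^*L]$ in $\pico{Y\times_k G}$, but equality of Picard classes only produces \emph{some} isomorphism $\phi\colon p_1^*L\simeq \sigma^*L$, determined up to a global unit in $\bar k[Y\times_k G]^*$. There is nothing canonical about this choice: the decomposition of Lemma~\ref{san} identifies isomorphism classes, not isomorphisms. The cocycle condition on $Y\times_X Y\times_X Y\simeq Y\times_k G\times_k G$ then fails by a unit $c\in\bar k[Y\times_k G\times_k G]^*$, and you must show that $c$ is a \v{C}ech coboundary, i.e.\ that $\phi$ can be adjusted by some $u\in\bar k[Y\times_k G]^*$ to make $c$ trivial. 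By Lemma~\ref{rosi} these unit groups decompose as $\bar k^*\oplus\U(Y)\oplus\widehat G$ and $\bar k^*\oplus\U(Y)\oplus\widehat G\oplus\widehat G$, so the relevant \v{C}ech complex can be analyzed, but this is genuine work that you have not carried out. Your sentence ``the cocycle condition is automatic'' is exactly where the argument breaks; it is not automatic, and this is precisely the content Sansuc supplies in \cite[Proposition~6.10]{MR631309}.
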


\begin{proof}
By \cite[Lemma 6.4]{MR631309}, almost all the maps that we have defined coincide with the ones of \cite[Proposition 6.10]{MR631309}, that proves that the sequence is exact in $\U(X)$, $\U(Y)$ and $\pico Y$. By Lemma \ref{lemma1} and Lemma \ref{lemma2}, we get the statement.
\end{proof}

\printbibliography[
heading=bibintoc,
title={References}
]

\Addresses

\end{document}